\documentclass{article}
\usepackage[a4paper, margin=3cm, bottom=4cm]{geometry}
\usepackage[utf8]{inputenc}
\usepackage[T1]{fontenc}
\usepackage{lmodern}

\usepackage[
    backend=biber,
    style=alphabetic, maxalphanames=5, 
    maxbibnames=100, maxsortnames=100, 
    sorting=nyt,
    natbib=true,
    url=false,doi=false,isbn=false,eprint=false
]{biblatex}
\usepackage{graphicx}
\usepackage{subcaption}
\graphicspath{ {images/} }
\usepackage{enumitem}
\usepackage[normalem]{ulem}
\usepackage[toc,page]{appendix}

\usepackage[bookmarks, bookmarksnumbered,
			colorlinks=true,
            linkcolor = blue,
            urlcolor  = blue,
            citecolor = teal]{hyperref}

\newcommand\fnsurl[1]{{\footnotesize\url{#1}}}

\usepackage{myquicksetup}
\numberwithin{definition}{section}
\numberwithin{theorem}{section}
\numberwithin{corollary}{section}
\numberwithin{proposition}{section}
\numberwithin{lemma}{section}
\numberwithin{claim}{section}
\numberwithin{fact}{section}
\numberwithin{remark}{section}
\numberwithin{example}{section}
\numberwithin{equation}{section}
\mathtoolsset{showonlyrefs} 

\usepackage{mylivemacros}

\usepackage{local_convergence_MFL} 

\usepackage{subfiles} 

\newif\ifextended  
\extendedtrue

\title{Local convergence of mean-field Langevin dynamics: \\ from gradient flows to linearly monotone games}
\date{\today}

\author{Guillaume Wang and Lénaïc Chizat}

\begin{document}
\maketitle

\begin{abstract}
    We study the local convergence of diffusive mean-field systems, including Wasserstein gradient flows, min-max dynamics, and multi-species games. We establish exponential local convergence in $\chi^2$-divergence with sharp rates, under two main assumptions: (i) the stationary measures satisfy a Poincaré inequality, and (ii) the velocity field satisfies a monotonicity condition, which reduces to linear convexity of the objective in the gradient flow case. We do not assume any form of displacement convexity or displacement monotonicity.
    
    In the gradient flow case, global exponential convergence is already known under our linear convexity assumption, with an asymptotic rate governed by the log-Sobolev constant of the stationary measure. Our contribution in this setting is to identify the sharp rate near equilibrium governed instead by the Poincaré constant. This rate coincides with the one suggested by Otto calculus (i.e.~by a tight positivity estimate of the Wasserstein Hessian), and refines some results of Tamura (1984), extending them beyond quadratic objectives.
    
   More importantly, our proof technique extends to certain non-gradient systems, such as linearly monotone two-player and multi-player games. In this case, we obtain explicit local exponential convergence rates in $\chi^2$-divergence, thereby partially answering the open question raised by the authors at COLT 2024. While that question concerns global convergence (which remains open), even local convergence results were previously unavailable.

At the heart of our analysis is the design of a Lyapunov functional that mixes the $\chi^2$-divergence with weighted negative Sobolev norms of the density relative to equilibrium.
    

\end{abstract}

\section{Introduction} \label{sec:intro}

Wasserstein gradient flows (WGFs) are a class of PDEs over $\PPP(\RR^d)$, the space of probability measures on $\RR^d$, which exhibit a particularly appealing geometric structure.
They correspond to gradient flow curves for the 2-Wasserstein metric and, as shown in foundational works by Otto \cite{jordan1998variational,otto2000generalization,otto2001geometry}, they can be treated analogously to gradient flows on a Riemannian manifold.
Theoretical charm aside, the study of WGFs is motivated by the fact that they include a number of examples of interest across applied mathematics, such as the granular media equation in mathematical physics \cite{carrillo2003kinetic} or the training dynamics of two-layer neural networks in machine learning \cite{mei2018mean}. 

A common feature of many of the PDEs of WGF type
which are encountered in practice
is that they include a diffusion term. That is, they take the form
\begin{equation} \label{eq:intro:MFLD}
    \forall t > 0,~~
    \partial_t \mu_t = \nabla \cdot (\mu_t \nabla F'[\mu_t]) + \tau \Delta \mu_t
\end{equation}
for some $\tau>0$ and $F: \PPP(\RR^d) \to \RR$,
where
$F'[\mu]: \RR^d \to \RR$ denotes the first variation at $\mu$.
Equivalently, the above PDE is the WGF of the entropy-regularized functional
\begin{equation*}
    F_\tau(\mu) = F(\mu) + \tau H(\mu),
\end{equation*}
where $H(\mu) = \int_{\RR^d} \log \frac{\d\mu}{\d x} \,\d\mu$
denotes the (negative) differential entropy.

The PDE \eqref{eq:intro:MFLD} is called the mean-field Langevin dynamics (MFLD) of $F$ with temperature~$\tau$ \cite{hu2021mean}.
Its well-posedness is ensured as soon as $F$ is displacement-smooth and $\mu_0$ has finite second moments, which we will assume throughout.
Its long-time convergence behavior
has been the subject of extensive investigations.
Notably, \cite{chizat2022mean,nitanda2022convex} showed that
if $F$ is linearly convex,%
\footnote{We call a functional $F$ over $\PPP(\RR^d)$ \emph{linearly convex} if for any $\mu, \nu$, the map $t \mapsto F((1-t) \mu + t \nu)$ is convex over $[0,1]$. The qualifier ``linearly'' emphasizes the distinction with displacement convexity, which refers to convexity along Wasserstein geodesics.}
then the stationary measure $\nu$ is unique if it exists, and under an additional uniform log-Sobolev inequality assumption, $\mu_t$ converges exponentially to $\nu$ 
in Kullback-Leibler (KL) divergence
from any initialization~$\mu_0$.

In this work, we consider MFLDs $(\mu_t)_{t \geq 0}$ \emph{which are assumed to converge} to some $\nu$ as $t \to \infty$, and we study their long-time convergence rates in terms of properties of $F$, $\tau$, and $\nu$. 
We do this by analyzing the local convergence behavior of $(\mu_t)_t$ when
initialized
in a small $\chi^2$-divergence ball around~$\nu$.
We emphasize that our focus is on estimating the exact rate of exponential convergence;
in particular, ours is a different kind of ``local'' analysis than in the recent work of \cite{monmarche2025local}, who showed the convergence of MFLDs initialized in a Wasserstein neighborhood of a stationary measure 
under a
local displacement Polyak-Lojasiewicz inequality
(with potentially pessimistic estimates of the convergence rate).

In its most basic form,
the result of our analysis for MFLD \eqref{eq:intro:MFLD} is as follows.
We refer to \autoref{def:intro:LSI_T2_PI} for the definition of Poincar\'e inequality (PI).
\begin{theorem}[Local $\chi^2$ convergence of MFLD, informal] \label{thm:intro:L2_MFLD}
    Suppose that $F$ is displacement-smooth and linearly convex and that MFLD admits a stationary measure~$\nu$.
	If $\nu$ satisfies PI with a constant $c_{\PI}$, then MFLD converges locally at an exponential rate of at least $2 \tau c_{\PI}$ in $\chisq{\cdot}{\nu}$. The local convergence occurs for initializations lying in small sublevel sets of $\chisq{\cdot}{\nu}$.
\end{theorem}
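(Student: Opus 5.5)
We track the relative density $h_t = \frac{\d\mu_t}{\d\nu} - 1$, so that $\chisq{\mu_t}{\nu} = \int h_t^2 \,\d\nu$. The stationary measure satisfies $\nu \propto \exp(-F'[\nu]/\tau)$. We rewrite MFLD as
\begin{equation}
    \partial_t \mu_t = \tau \nabla\cdot\Bigl(\mu_t \nabla \log\tfrac{\d\mu_t}{\d\nu}\Bigr) + \nabla\cdot\bigl(\mu_t \nabla (F'[\mu_t]-F'[\nu])\bigr).
\end{equation}
Differentiating $\int h_t^2\,\d\nu$ in time then gives three contributions. The first is a dissipation term $-2\tau \int |\nabla h_t|^2 \,\d\nu$. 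The second is a linear interaction term $-2\int \nabla h_t \cdot \nabla (F'[\mu_t]-F'[\nu])\,\d\nu$. The third is a cubic remainder $-2\int h_t \nabla h_t\cdot\nabla(F'[\mu_t]-F'[\nu])\,\d\nu$, which comes from writing $\mu_t = (1+h_t)\nu$. The plan is to show that the linear interaction term is nonpositive up to higher order, and that the cubic remainder is $o(\int|\nabla h_t|^2\d\nu)$ when $\chisq{\mu_t}{\nu}$ is small. Once this is done, PI gives $\int |\nabla h|^2 \d\nu \ge c_{\PI}\int h^2\d\nu$ (using $\int h\,\d\nu=0$), and Grönwall yields the rate $2\tau c_{\PI}$ up to an arbitrarily small loss.

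The key step is the sign of the interaction term, and this is where linear convexity enters. Write $F'[\mu_t]-F'[\nu] = K h_t + R_t$, where $K h(x) = \int F''[\nu](x,y) h(y)\,\d\nu(y)$ is the linearization and $R_t$ is quadratic in $h_t$ by displacement smoothness. Linear convexity says that $K$ is a positive semidefinite operator on $L^2(\nu)$. However, the interaction term is $\langle \nabla h, \nabla K h\rangle_{L^2(\nu)} = \langle L h, K h\rangle$, where $L = -\Delta + \nabla V\cdot\nabla$ is the $\nu$-weighted generator. The product of two positive operators is not necessarily positive in this pairing, so $\chi^2$ alone need not decrease.

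The remedy, following the abstract, is a modified Lyapunov functional. I would use
\begin{equation}
    \mathcal{L}(h) = \tau\|h\|_{L^2(\nu)}^2 + \langle h, K h\rangle_{L^2(\nu)},
\end{equation}
which is a mix of $\chi^2$ with the $K$-weighted (negative-Sobolev-like) norm. Under the linearized flow $\partial_t h = -\tau L h - L K h$, one gets
\begin{equation}
    \tfrac{\d}{\d t}\tfrac12\mathcal{L} = -\langle (\tau+K)h, L(\tau+K)h\rangle = -\|\nabla(\tau h+Kh)\|^2_{L^2(\nu)} \le 0.
\end{equation}
This is exactly the Otto Hessian structure. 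One then needs the decay rate $2\tau c_{\PI}$ relative to $\mathcal{L}$ itself. With $g=(\tau+K)h$, PI gives $\|\nabla g\|^2 \ge c_{\PI}\|g\|^2$. Moreover, $\|g\|^2 = \langle h,(\tau+K)^2h\rangle \ge \tau\langle h,(\tau+K)h\rangle = \tau\mathcal{L}$, because $K\succeq0$. This yields $\frac{\d}{\d t}\mathcal{L}\le -2\tau c_{\PI}\mathcal{L}$. Since $K$ is bounded, $\mathcal{L}$ is equivalent to $\chisq{\cdot}{\nu}$ up to constants, which transfers exponential decay to $\chi^2$. If the sharp constant is required asymptotically, one can use a weight $\varepsilon K$ with $\varepsilon\to0$ or exploit the equivalence directly.

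The main obstacle will be the nonlinear remainders. These are the cubic terms $\int h\nabla h\cdot\nabla(Kh)\,\d\nu$, the terms coming from $R_t$, and the time derivative of the $K$-part along the true flow. Each must be bounded by $C\,\chisq{\mu_t}{\nu}^{1/2}\,\|\nabla g\|^2$, or by something similarly absorbable. Here displacement smoothness gives $\|\nabla K h\|_\infty \lesssim \|h\|_{L^2(\nu)}$ and uniform bounds on $\nabla F''$. The difficulty is that $h\nabla h$ is controlled only in $L^1$ against $\|h\|_{L^2}\|\nabla h\|_{L^2}$, and $\|\nabla h\|$ must itself be related to $\|\nabla g\|$. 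I would first try to bound $\|\nabla h\|\le \tau^{-1}(\|\nabla g\| + \|\nabla Kh\|)$ and then handle $\|\nabla Kh\|$ by PI. Once these bounds are in place, a bootstrap on small sublevel sets of $\chisq{\cdot}{\nu}$ closes the argument and keeps the trajectory in the region where the estimates hold.
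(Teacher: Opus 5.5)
Your argument is correct, but it uses a different Lyapunov functional from the paper's.

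\textbf{The paper's route.} The paper tracks $\norm{h}_{H^{-1}_\nu}^2=\innerprod{h}{L^{-1}h}_\nu$, which is the linearized $W_2^2$, i.e.\ $\norm{\Phi}_\nu^2$ for $\Phi=\nabla L^{-1}h$. Its linearized dissipation is $2\innerprod{h}{(\tau+K)h}_\nu\ge 2\tau c_{\PI}\norm{h}_{H^{-1}_\nu}^2$. Because this norm is weaker than $\chi^2$, the paper needs a second step: a defective Gr\"onwall bound for $\norm{h}_\nu^2$ in the quadratic case, and the mixed functional $W_t=\norm{h}_{H^{-1}_\nu}^2+\gamma\norm{h}_\nu^2$ in general.

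\textbf{Your route.} You track $\mathcal{L}(h)=\innerprod{h}{(\tau+K)h}_\nu$ instead. This is the second-order expansion of $2(F_\tau(\mu)-F_\tau(\nu))$, and formally equals $\Hess_\nu F_\tau(\Phi,\Phi)$ for the same $\Phi$. Its dissipation $2\norm{\nabla g}_\nu^2$ is a linearized Fisher information, so the rate comes from a linearized PL inequality. The finite-dimensional analogue is tracking $f(x)-f(x^*)$ rather than $\norm{x-x^*}^2$.

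\textbf{What your route buys.} It closes in one step.
\begin{itemize}
\item $\mathcal{L}\ge\tau\norm{h}_\nu^2$ makes small sublevel sets invariant.
\item PI together with $K\succeq0$ gives $\norm{\nabla g}_\nu^2\ge c_{\PI}\tau^2\norm{h}_\nu^2$. This absorbs the cubic terms, e.g.\ $\abs{\innerprod{\nabla g}{h\nabla Kh}_\nu}\le\frac{M_{11}}{\tau c_{\PI}}\norm{h}_\nu\norm{\nabla g}_\nu^2$.
\item The $R$-terms are absorbed the same way, using $\sup\norm{\nabla R'[\mu_t]}\lesssim\norm{h}_\nu^2$.
\end{itemize}
No mixing is needed, and the prefactor is $1+M_{11}/(\tau c_{\PI})$ rather than $O(\eps^{-2})$.

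\textbf{What the paper's route buys.} It does not need a potential. The derivative $\frac{d}{dt}\innerprod{h}{L^{-1}h}_\nu$ only sees the symmetric part of the linearized operator. So the same argument covers MFL-DA and linearly monotone multi-species flows, where your $\mathcal{L}$ has no signed derivative.

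\textbf{Details to fix when writing it out.}
\begin{itemize}
\item PI applies to $g=\tau h+Kh$ only after centering. Use $g-\int g\,\d\nu$, which has the same gradient.
\item Linear convexity gives $K\succeq0$ only on $\LLL^2_\nu\cap\{1\}^\perp$. That is all you actually use.
\item The $K$-part is not an extra obstacle. Since $K$ is fixed and symmetric, you have exactly
\begin{equation*}
\tfrac{d}{dt}\tfrac12\mathcal{L}=\innerprod{g}{\partial_t h}_\nu=-\norm{\nabla g}_\nu^2-\innerprod{\nabla g}{h\nabla Kh}_\nu-\innerprod{\nabla g}{(1+h)\nabla R'[\mu_t]}_\nu.
\end{equation*}
\item The bounds $\sup\norm{\nabla Kh}\le M_{11}\norm{h}_{H^{-1}_\nu}$ and $\sup\norm{\nabla R'[\mu]}\lesssim W_2^2(\mu,\nu)$ require $M_{11},M_{12},M_{111}<\infty$. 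Displacement smoothness alone does not give them.
\item Drop the $\eps K$-weighting remark: it destroys the exact square. The equivalence $\tau\norm{h}_\nu^2\le\mathcal{L}\le(\tau+M_{11}/c_{\PI})\norm{h}_\nu^2$ already transfers the rate to $\chi^2$.
\end{itemize}
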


More precise versions of the statement
and their proofs,
as well as a discussion of the implications, are placed in \autoref{sec:MFLD_L2}.
In brief,
\begin{itemize}
    \item The assumption that $F$ is globally linearly convex can be relaxed in two directions.
    It can be replaced by (1) a ``local'' condition involving only its second variation at stationarity, $F''[\nu]$,
    or by (2) global 
    weak convexity of $F$ relative to $H$,
    i.e., linear convexity of $F + \tau_0 H$ for some $0 \leq \tau_0 < \tau$. In the latter case, the local exponential rate lower bound becomes $2 (\tau-\tau_0) c_{\PI}$.
    These two relaxations can be combined to a condition involving $F''[\nu]$ and $\tau_0$ which also suffices for our analysis, stated as \autoref{assum:MFLD_A}.
    \item In the case where $F$ is quadratic, i.e., of the form 
    \begin{equation} \label{eq:intro:quadr_F}
        F(\mu) = \int V \,\d\mu + \frac12 \iint k(x,x') \,\d\mu(x) \d\mu(x'),
    \end{equation}
    explicit estimates for the size of the local
    neighborhood and for the constant prefactor in the exponential convergence bound can be obtained (\autoref{thm:MFLD_L2:localL2:quad}).
    We note that for quadratic $F$, our analysis is essentially equivalent to the one of \cite[Section~5]{tamura1984asymptotic}, 
    only with much more explicit constants;
    see Related works below for further discussion.
    \item Our estimate of the local rate is tight in the case where $F$ is linear. Indeed in this case, \eqref{eq:intro:MFLD} reduces to
    (the Fokker-Planck equation of)
    the overdamped Langevin dynamics,
    whose $\LLL^2_\nu$ convergence rate is precisely characterized by the PI constant of $\nu$ (\autoref{prop:MFLD_L2:localL2:OL}).
    \item Under additional qualitative assumptions on $F$,
    our result implies 
	a long-time convergence rate estimate for MFLD
    in Wasserstein distance, KL-divergence, and $\chi^2$-divergence to $\nu$, as well as in $F_\tau(\cdot) - F_\tau(\nu)$ (\autoref{coroll:MFLD_L2:longtime:allmetrics}).
    \item Our result is consistent with heuristic Otto calculus computations for the Wasserstein Hessian of $F_\tau$ at stationarity, developed in \autoref{sec:heuri_pfids}.
    This can be viewed as a non-linear extension of the fact that
    PI for $\nu$ is equivalent to positive-definiteness of the Wasserstein Hessian of $\KLdiv{\cdot}{\nu}$ at $\nu$ itself, with the same constant.
\end{itemize}

Besides the MFLD PDE proper,
we also obtain a corresponding result for the mean-field Langevin descent-ascent dynamics (MFL-DA), which is the dynamics over $\PPP(\XXX) \times \PPP(\YYY)$, for Riemannian manifolds $\XXX, \YYY$ and a payoff function $k: \XXX \times \YYY \to \RR$, defined by
\begin{equation} \label{eq:intro:MFL-DA}
	\begin{cases}
		\partial_t \mu^x_t = \nabla \cdot \left( \mu^x_t \nabla \int_\YYY k(\cdot, y) \d\mu^y_t(y) \right) + \tau \Delta \mu^x_t \\
		\partial_t \mu^y_t = -\nabla \cdot \left( \mu^y_t \nabla \int_\XXX k(x, \cdot) \d\mu^x_t(x) \right) + \tau \Delta \mu^y_t.
	\end{cases}
\end{equation}
We refer to 
Related works below
for the significance of this dynamics in game theory and for a discussion of recent works studying its global convergence behavior. In particular, under mild assumptions, there exists a unique equilibrium pair $(\nu^x, \nu^y)$ since MFL-DA is the Wasserstein gradient descent-ascent flow of $(\mu^x, \mu^y) \mapsto \iint_{\XXX \times \YYY} k \,\d(\mu^x \otimes \mu^y) + \tau H(\mu^x) - \tau H(\mu^y)$, which is strictly linearly convex-concave.
Yet, whether MFL-DA converges at all is an open problem \cite{wang2024open} 
(unless
$\tau$ is large or
$k$ is assumed convex-concave \cite{cai2024convergence}).
In this work we focus on the more modest goal of establishing its local convergence, and we obtain the following.

\begin{theorem}[Local $\chi^2$ convergence of MFL-DA, informal] \label{thm:intro:L2_MFL-DA}
	Suppose
    $\nabla_x \nabla_y k(x,y)$ is uniformly bounded
    and MFL-DA admits an equilibrium $(\nu^x, \nu^y)$.
	If $\nu^x$ and $\nu^y$ satisfy PIs with constants $c_{\PI}^x$ resp.\ $c_{\PI}^y$, then MFL-DA converges locally at an exponential rate of at least $2 \tau \min\{c_{\PI}^x, c_{\PI}^y\}$ in $\chisq{\cdot}{\nu^x} + \chisq{\cdot}{\nu^y}$. The local convergence occurs for initializations lying in small sublevel sets of $\chisq{\cdot}{\nu^x} + \chisq{\cdot}{\nu^y}$.
\end{theorem}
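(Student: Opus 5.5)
We write $\mu^x_t = (1+f_t)\nu^x$, $\mu^y_t = (1+g_t)\nu^y$ with $\int f_t\,\d\nu^x = \int g_t\,\d\nu^y = 0$. We then track the evolution of the perturbations $(f_t,g_t)$ and build a Lyapunov functional equivalent to $\|f\|^2_{\LLL^2_{\nu^x}} + \|g\|^2_{\LLL^2_{\nu^y}}$ (which equals $\chisq{\mu^x}{\nu^x}+\chisq{\mu^y}{\nu^y}$). This functional should decay at rate $2\tau\min\{c^x_{\PI},c^y_{\PI}\}$, up to higher-order terms that are absorbed when the $\chi^2$-divergence is small.

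\emph{Step 1: the perturbation equation.} The stationarity conditions give $\nu^x \propto e^{-\phi^x/\tau}$ with $\phi^x = \int k(\cdot,y)\,\d\nu^y(y)$, and $\nu^y \propto e^{\phi^y/\tau}$ with $\phi^y = \int k(x,\cdot)\,\d\nu^x(x)$. Substituting into the dynamics gives
\begin{equation}
\partial_t f = \tau L^x f + \frac{1}{\nu^x}\nabla\cdot\bigl(\nu^x (1+f)\nabla K g\bigr),
\qquad
\partial_t g = \tau L^y g - \frac{1}{\nu^y}\nabla\cdot\bigl(\nu^y (1+g)\nabla K^* f\bigr).
\end{equation}
Here $L^x = \frac{1}{\nu^x}\nabla\cdot(\nu^x\nabla\cdot)$ is the $\nu^x$-reversible generator, $Kg = \int k(\cdot,y)g(y)\,\d\nu^y$, and $K^*$ is its adjoint. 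The linear part is $\tau L$ plus an antisymmetric-looking coupling. The linear cross terms in $\frac{d}{dt}\frac12(\|f\|^2+\|g\|^2)$ are $-\int \nabla f\cdot\nabla Kg\,\d\nu^x + \int \nabla g\cdot\nabla K^*f\,\d\nu^y$. Unlike the case of the flat $\LLL^2$ inner product, these do \emph{not} cancel: the transport coupling is skew only with respect to a different weighting. This failure of cancellation is the main obstacle.

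\emph{Step 2: modified Lyapunov functional.} Following the hint in the abstract, we mix $\chi^2$ with weighted negative Sobolev norms. Concretely, we consider
\[
\mathcal{L}(f,g) = \|f\|^2 + \|g\|^2 + \frac{1}{\tau}\bigl(\langle f, Kg\rangle_{\nu^x}\text{-type correction}\bigr),
\]
or more naturally $\mathcal{L} = \|f\|^2+\|g\|^2 + \frac{1}{\tau}\bigl(\|f\|^2_{\dot H^{-1}}\text{ and } \|g\|^2_{\dot H^{-1}}\text{ terms}\bigr)$. The key observation is that the coupling term $\frac{1}{\nu^x}\nabla\cdot(\nu^x\nabla Kg) = L^x Kg$. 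So in the $\dot H^{-1}_{\nu^x}$ inner product $\langle u,v\rangle_{-1} = \langle u,(-L^x)^{-1}v\rangle$, the linearized cross terms become $-\langle f,Kg\rangle_{\nu^x} + \langle g,K^*f\rangle_{\nu^y} = 0$, so they cancel exactly. Meanwhile $\tau L$ contributes $-\tau\|f\|^2$ in $\dot H^{-1}$. I would therefore take
\[
\mathcal{L} = \|f\|^2_{\LLL^2}+\|g\|^2_{\LLL^2} + \lambda\bigl(\|f\|^2_{\dot H^{-1}}+\|g\|^2_{\dot H^{-1}}\bigr).
\]
The $\dot H^{-1}$ part gives good dissipation $-\lambda\tau(\|f\|^2+\|g\|^2)$. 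The $\LLL^2$ part gives $-\tau(\|\nabla f\|^2+\|\nabla g\|^2)$ plus cross terms bounded by $C\|\nabla f\|\,\|g\|$, where $C$ depends on $\sup|\nabla_x\nabla_y k|$ via $\|\nabla Kg\|_\infty \le \sup|\nabla_x\nabla_yk|\cdot\|g\|_{\LLL^1}$. We absorb these by Young's inequality into $\frac{\tau}{2}\|\nabla f\|^2$ and the $\dot H^{-1}$ dissipation, choosing $\lambda$ large. To keep the rate sharp ($2\tau c_{\PI}$ rather than something degraded), we note that $\|\cdot\|_{\dot H^{-1}}\le c_{\PI}^{-1/2}\|\cdot\|$. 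We then compute $\frac{d}{dt}\mathcal{L}\le -2\tau c_{\PI}\mathcal{L}$ plus errors. If this is not exact, we accept a rate $2\tau c_{\PI}-\epsilon$ and optimize over weights; the cleanest route may instead be a spectral argument on the linearized operator (skew part relative to the $\dot H^{-1}$ geometry) giving the exact rate.

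\emph{Step 3: nonlinear terms.} The remaining terms are $\nabla\cdot(\nu^x f\nabla Kg)$ and its $y$-analogue. In $\LLL^2$ they give $\int f\nabla f\cdot\nabla Kg\,\d\nu^x$, bounded by $\|\nabla Kg\|_\infty\|f\|\,\|\nabla f\| \lesssim \|g\|\,\|f\|\,\|\nabla f\|$. This is cubic, so it is absorbed into the gradient dissipation when $\mathcal{L}$ is small. In $\dot H^{-1}$ they are similarly controlled, since $\|(-L)^{-1/2}\nabla\cdot(\nu f\nabla Kg)/\nu\| \le \|f\nabla Kg\|$. A continuity/bootstrap argument then shows that small sublevel sets of $\mathcal{L}$, and hence of $\chi^2$, are invariant, which yields exponential convergence with a constant prefactor.
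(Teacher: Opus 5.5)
Your proposal is correct and is essentially the paper's proof. It uses the same exact cancellation of the $K$-cross terms in $Z=\|f\|_{H^{-1}_{\nu^x}}^2+\|g\|_{H^{-1}_{\nu^y}}^2$, the same gradient-field projector bound on the quadratic terms, and a rough estimate for $A=\|f\|_{\nu^x}^2+\|g\|_{\nu^y}^2$. You close with the weighted functional $A+\lambda Z$, which is what the paper does for general MFLD (its $W_t$ with $\gamma=1/\lambda$); for MFL-DA the paper instead first shows that $Z_t$ contracts on its own and then runs a defective Gr\"onwall argument on $A_t$, which gives cleaner explicit constants but needs the sharper estimate $\sup\|\nabla Kg\|\le M_{11}\|g\|_{H^{-1}_{\nu^y}}$.

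Your hedge in Step~2 is unnecessary, and no spectral argument is needed. With $c_{\PI}=\min\{c^x_{\PI},c^y_{\PI}\}$, up to cubic terms $-\frac{d}{dt}(A+\lambda Z)\ge(\tau c_{\PI}+2\lambda\tau-C^2/\tau)A$, while $A+\lambda Z\le(1+\lambda/c_{\PI})A$, so the rate tends to $2\tau c_{\PI}$ as $\lambda\to\infty$. Any bound $\sup\|\nabla Kg\|\le C\|g\|_{\nu^y}$ suffices for this. Your $\LLL^1$ version of that bound is fine on a torus, but it uses $\int g\,\d\nu^y=0$ and needs an extra factor $\mathrm{diam}(\YYY)$.
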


The precise version of this statement and its proof are placed in \autoref{sec:MFL-DA}.
The analysis can be generalized to the two-timescale version of the dynamics considered in \cite{lu2023two,an2025convergence}, as we discuss in the same section, and to a $N$-player and non-multi-linear setting, as we explain now.

The most general setting in which we apply our analysis is that of $N$-species flows with diffusion, i.e., dynamics over 
$(\mu^1_t, ..., \mu^N_t) \in 
\PPP_2(\RR^{d_1}) \times ... \times \PPP_2(\RR^{d_N})$ 
of the form
\begin{equation} \label{eq:intro:Nspecies_flow}
	\forall t \geq 0, \forall I \in \{1,...,N\},~
	\partial_t \mu^I_t = \nabla \cdot \left( \mu^I_t \nabla V_I[\mu^1_t, ..., \mu^N_t] \right) + \tau \Delta \mu^I_t,
\end{equation}
for appropriately regular potentials $V_I: \PPP_2(\RR^{d_1}) \times ... \times \PPP_2(\RR^{d_N}) \times \RR^{d_I} \to \RR$.
The global convergence of such dynamics was studied in \cite{conger2025monotone} assuming strong displacement monotonicity, the natural generalization of strong displacement convexity for multi-species contexts.
In this work, we study the local convergence of \eqref{eq:intro:Nspecies_flow}
to an equilibrium tuple under \emph{linear monotonicity}.
In particular, our analysis applies to the generalization of MFL-DA to multi-player pairwise-zero-sum polymatrix continuous games considered in \cite{lu2025convergence}.
Our result, 
loosely stated,
is as follows.

\begin{theorem}[Local $\chi^2$ convergence of multi-species flows with diffusion, informal] \label{thm:intro:MFG}
	Suppose the potentials $V_I$ in \eqref{eq:intro:Nspecies_flow} 
	have $\CCC^3$ first and second variations,
	and that they are linearly monotone, i.e.,
	\begin{equation*}
		\forall \mu^1, \tmu^1 \in \PPP_2(\RR^{d_1}), ..., \mu^N, \tmu^N \in \PPP_2(\RR^{d_N}),~~
		\sum_I \int_{\RR^{d_I}} 
		\left( V_I[\mu^1, ..., \mu^N] - V_I[\tmu^1, ..., \tmu^N] \right) \, \d(\mu^I - \tmu^I) \geq 0.
	\end{equation*}
	Further suppose that there exists an equilibrium $(\nu^1, ..., \nu^N)$.
	If $\nu^I$ satisfies PI with a constant $c_{\PI}^I$ for each $I$, then \eqref{eq:intro:Nspecies_flow} converges locally at an exponential rate of at least 
	$2 \tau \min_I c_{\PI}^I$
	in $\sum_I \chisq{\cdot}{\nu^I}$. The local convergence occurs for initializations lying in small sublevel sets of $\sum_I \chisq{\cdot}{\nu^I}$.
\end{theorem}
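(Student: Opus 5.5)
We work in the relative-density variables $h^I_t = \frac{\d\mu^I_t}{\d\nu^I} - 1$, so that $\sum_I \chisq{\mu^I_t}{\nu^I} = \sum_I \|h^I_t\|^2_{\LLL^2_{\nu^I}}$. Stationarity of $(\nu^1,\dots,\nu^N)$ means $\nu^I \propto e^{-V_I[\nu]/\tau}$, with $V_I[\nu]$ shorthand for $V_I[\nu^1,\dots,\nu^N]$. With this, each equation in \eqref{eq:intro:Nspecies_flow} can be rewritten as
\begin{equation*}
	\partial_t h^I = \tau\, \mathcal{L}_I h^I + \nabla^*_{\nu^I}\!\left( (1+h^I)\, \nabla \big(V_I[\mu] - V_I[\nu]\big) \right),
\end{equation*}
where $\mathcal{L}_I = \nu^I$-weighted Ornstein--Uhlenbeck-type generator and $\nabla^*_{\nu^I} \xi = (\nu^I)^{-1}\nabla\cdot(\nu^I \xi)$. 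Linearizing $V_I[\mu]-V_I[\nu] \approx \sum_J \int \delta_J V_I[\nu](\cdot,y)\, h^J(y)\,\d\nu^J(y) =: (K_{IJ} h^J)$, the dynamics become $\partial_t h = -A h - B h + R(h)$. Here $A = -\tau\,\mathrm{diag}(\mathcal{L}_I)$ has $\langle h, Ah\rangle_{\nu} = \tau\sum_I \|\nabla h^I\|^2_{\nu^I} \geq \tau \sum_I c^I_{\PI}\|h^I\|^2$ by PI, since $\int h^I\d\nu^I=0$. The linear interaction term is $Bh = (-\tau^{-1}\cdot\tau\mathcal{L}_I K_{IJ} h^J)_I$ up to sign, i.e.\ $B = A\,\tau^{-1}K$ where $K=(K_{IJ})$, and the remainder $R$ is quadratic in $h$.

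The crux is that $\frac{\d}{\dt}\frac12\|h\|^2_{\LLL^2}$ contains the cross term $\langle h, A\tau^{-1}Kh\rangle$, whose sign is not controlled by linear monotonicity. Linear monotonicity only says that $K$ (more precisely its symmetric part with respect to the $\LLL^2_\nu$ pairing on mean-zero functions) is positive semidefinite. I would therefore use the Lyapunov functional suggested by the abstract:
\begin{equation*}
	\mathcal{E}(h) = \tfrac12\langle h, h\rangle_{\nu} + \tfrac{1}{2\tau}\langle h, K h\rangle_{\nu} = \tfrac12\langle h, (I + \tau^{-1}K)h\rangle_\nu,
\end{equation*}
which is a weighted negative-Sobolev-type correction, since $K$ is smoothing by the $\CCC^3$ assumption on the variations. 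This functional is $\geq \frac12\|h\|^2$ by monotonicity and $\leq C\|h\|^2$ because $K$ is bounded. The linear part of the flow is $\partial_t h = -A(I+\tau^{-1}K)h$ (check: $(I+\tau^{-1}K)$ is exactly the linearization of $\tau\log(\mu/\nu)+V[\mu]-V[\nu]$, divided by $\tau$). Writing $P = I+\tau^{-1}K$, we get
\begin{equation*}
	\frac{\d}{\dt}\mathcal{E} = -\langle Ph, A P h\rangle_\nu \leq -\tau \sum_I c^I_{\PI}\|(Ph)^I\|^2.
\end{equation*}
Note that only the symmetric part of $K$ enters $\mathcal{E}$; the antisymmetric part is handled because we use $\langle h, Ph\rangle$ with $P$ the full operator. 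This needs care: the exact identity $\frac{\d}{\dt}\langle h,Ph\rangle = -2\langle Ph, APh\rangle$ requires $P$ to be symmetric. In the non-symmetric case I would instead define $\mathcal{E}$ using $P_s = I+\tau^{-1}K_s$ and bound the antisymmetric contribution. This is the step I expect to be the main obstacle. The plan for it is to exploit the fact that the quadratic form $\langle Ph, APh\rangle$ dominates via PI, and that $\langle Ph, h\rangle_\nu = \langle P_s h,h\rangle$.

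To get the sharp rate $2\tau\min_I c^I_{\PI}$, I need $\langle Ph, APh\rangle \geq c\,\langle h, P_s h\rangle$ with $c = \tau \min_I c^I_{\PI}$. By PI, $\langle Ph,APh\rangle \geq \tau c_{\min}\|Ph\|^2$, and
\begin{equation*}
	\|Ph\|^2 = \|h\|^2 + 2\tau^{-1}\langle h,K_s h\rangle + \tau^{-2}\|Kh\|^2 \geq \langle h, P_s h\rangle
\end{equation*}
precisely because $\langle h,K_sh\rangle\geq 0$. This gives $\frac{\d}{\dt}\mathcal{E}\le -2\tau c_{\min}\mathcal{E}$ at the linear level, and this resolves the symmetry issue.

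Finally, the nonlinear remainder contributes terms of the form $\langle Ph, \nabla^*(h\nabla Kh)\rangle$ plus second-order Taylor terms of $V_I$. Using the $\CCC^3$ bounds on first and second variations, these are bounded by $C\|h\|\,\|\nabla (Ph)\|\,\|h\|$. They are absorbed into the dissipation $\tau\|\nabla(Ph)\|^2$ for any $\epsilon$-loss when $\|h\|$ is small. A Grönwall bootstrap on the sublevel set $\{\mathcal{E}\le\delta\}$ then yields exponential decay at rate $2\tau c_{\min}-O(\sqrt\delta)$, and equivalence of $\mathcal{E}$ with $\sum_I\chisq{\cdot}{\nu^I}$ transfers the result to the statement. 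The asymptotic rate is $2\tau c_{\min}$ as $t\to\infty$.
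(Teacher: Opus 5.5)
There is a genuine gap at exactly the step you flagged, and the inequality you offer does not close it. When $K$ is not symmetric, which is the main case of this theorem (MFL-DA, pairwise-zero-sum polymatrix games), your functional is $\mathcal{E}=\frac12\langle h,P_s h\rangle_\nu$. Along the linearized flow $\partial_t h=-APh$, with $A=\tau L$ and $P=P_s+\tau^{-1}K_a$, its derivative is
\begin{equation*}
\frac{d}{dt}\mathcal{E} = -\langle P_s h, APh\rangle_\nu = -\tau\|\nabla(P_s h)\|_\nu^2 - \langle \nabla(P_s h), \nabla(K_a h)\rangle_\nu ,
\end{equation*}
and not $-\langle Ph,APh\rangle_\nu$. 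So the bound $\|Ph\|^2\ge\langle h,P_sh\rangle$ addresses the wrong quantity.

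The cross term has no sign and is quadratic in $h$, so restricting to a small neighborhood does not make it negligible. Using $\sup\|\nabla K_a h\|\le M_{11}\|h\|_{H^{-1}_\nu}$, it is absorbed by the dissipation only under a condition like $\tau c_{\PI}\gtrsim M_{11}$, which is not assumed.

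MFL-DA makes this concrete. There $K_s=0$, so $\mathcal{E}=\frac12\big(\chisq{\mu^x}{\nu^x}+\chisq{\mu^y}{\nu^y}\big)$, and its derivative contains $-\langle\nabla f,\nabla Kg\rangle_{\nu^x}+\langle\nabla g,\nabla K^\top f\rangle_{\nu^y}$. This is exactly the obstruction that makes plain $\chi^2$ unusable as a Lyapunov functional unless $\tau$ is large.

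When $K$ is symmetric (e.g.\ MFLD), $\tau\mathcal{E}$ is the second-order expansion of $F_\tau-F_\tau(\nu)$, and $P\succeq I$ makes your argument work. That is a legitimate alternative in the gradient case, but it does not cover this statement.

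The missing idea is to weight by $A^{-1}$ rather than by $P$. Since the linear part is $-APh$, at the linear level one has
\begin{equation*}
\frac{d}{dt}\langle h,L^{-1}h\rangle_\nu=-2\tau\|h\|_\nu^2-2\langle h,Kh\rangle_\nu .
\end{equation*}
The operator $L$ cancels, the antisymmetric part of $K$ drops out identically, and linear monotonicity makes the remaining $K$-term non-positive. PI in the form $L^{-1}\preceq c_{\PI}^{-1}$ then gives contraction of $Z_t=\sum_I\|f^I_t\|^2_{H^{-1}_I}$ at rate $2\tau c_{\PI}$.

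The nonlinear terms are handled by Cauchy--Schwarz in $H^{-1}$ together with $\sup\|\nabla K_{IJ}f\|\le M_{11}\|f\|_{H^{-1}_J}$. They involve $\|f^I_t\|_{\nu^I}$, which $Z_t$ does not control, so the paper adds a rough bound on $\frac{d}{dt}\sum_I\|f^I_t\|^2_{\nu^I}$. It then closes with $W_t=Z_t+\gamma\sum_I\|f^I_t\|^2_{\nu^I}$, or, when $R_I=0$, with Gr\"onwall and a decaying source term. This also transfers the rate to $\sum_I\chisq{\mu^I_t}{\nu^I}$.
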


In fact the global linear monotonicity assumption can be relaxed, similar to the case of MFLD, to a ``local'' condition on the kernels $k_{IJ}(z^I, z^J) = \frac{\delta V_I[\nu^1, ..., \nu^N](z^I)}{\delta \nu^J(z^J)}$,
and/or to a weak linear monotonicity condition, in the sense of \autoref{assum:MFG_B}.
The full statement of our result 
and its proof 
are provided in \autoref{sec:MFG}.
In particular, this result illustrates that a gradient flow structure is not essential for our proof technique,
since the $V_I$ may not correspond to the Wasserstein gradients of fixed functionals.

\vspace{1em}
The remainder of the paper is organized as follows.
\autoref{subsec:intro:related} contains a discussion of related works.
In \autoref{sec:heuri_pfids} we present a heuristic argument supporting \autoref{thm:intro:L2_MFLD} based on Otto calculus, and introduce our main proof ideas.
In \autoref{sec:MFLD_L2} we present our local 
convergence analysis of MFLD.
We show how the analysis can be adapted to obtain our results on MFL-DA in \autoref{sec:MFL-DA}, and on multi-species flows in \autoref{sec:MFG}.
We conclude with directions for future work in \autoref{sec:ccl}.

\subsection{Related works} \label{subsec:intro:related}

\paragraph{Convergence analyses of MFLD.}
The convergence in time of MFLDs, i.e., of PDEs of the form \eqref{eq:intro:MFLD}, is a classical topic in mathematical physics in the context of interacting particle systems, and has recently also attracted some attention in the machine learning theory community.

The global convergence behavior of MFLD for displacement-convex $F$ is very well understood \cite{carrillo2003kinetic,ambrosio2008gradient}.
The convergence properties of overdamped Langevin dynamics,
corresponding precisely to MFLD with linear $F$,
are also very well studied \cite{bakry2014analysis}.
The global convergence of MFLD for linearly convex $F$ was proved by \cite{hu2021mean}, and a quantitative exponential rate under an additional uniform log-Sobolev inequality (LSI) condition was 
shown by \cite{chizat2022mean} and \cite{nitanda2022convex} (independently);
all three were motivated by applications to two-layer neural network training \cite{mei2018mean}.
The global guarantee in KL-divergence of \cite{chizat2022mean,nitanda2022convex} was improved to guarantees in $\LLL^p$ norm by \cite{chen2022uniform}, under the same assumptions on $F$.
Another quantitative global convergence result appeared in \cite[Thm.~1.1(b)]{carrillo2020long}, specific to the case of MFLD over a torus for linearly convex quadratic $F$, with $V = 0$ and $k$ of the form $k(x,y) = W(x-y)$ in \eqref{eq:intro:quadr_F}.

The local convergence of MFLDs,
i.e.,
the stability of mean-field systems around an equilibrium, is a classical subject in mathematical physics, with most works focusing on the case of quadratic~$F$.
The work most closely related to ours, in this direction, is \cite{tamura1984asymptotic} --- in fact our analysis specialized to quadratics is essentially equivalent to its Section~5 with more modern notations.
At the technical level, the only difference is that we do not require $\nabla_x k(x,x')$ uniformly bounded
in \eqref{eq:intro:quadr_F},
as we use a $H^{-1}$ instead of a $\LLL^2$ estimate at the step corresponding to its Lemma~5.7(i).
However the
constants were only very loosely tracked in that work, and no intuition on their significance was provided.

Other than \cite{tamura1984asymptotic}, the work most directly comparable to ours is \cite{cormier2022stability}.
It provides a general abstract criterion for MFLD with quadratic $F$ 
to be locally stable in 1-Wasserstein distance, i.e., for the existence of constants $C, \delta, \lambda$ such that 
$W_1(\mu_t, \nu) \leq C e^{-\lambda t} W_1(\mu_0, \nu)$
for any $\mu_0$ with $W_1(\mu_0, \nu) \leq \delta$.
(Our convergence bounds take the same form with $\chi^2$-divergence instead of $W_1$.)
The criterion is stated in terms of the spectra of 
a certain family of linear operators which depend on $\nu$ and on the kernel $k$ in \eqref{eq:intro:quadr_F}.
Similar to our work, the gradient flow structure of MFLD is not essential for their analysis, 
and their results also apply for
(in fact, they were stated for)
general McKean-Vlasov dynamics with pairwise interactions.
Contrary to our work, no effort was made in that paper to quantify the rate, i.e., to characterize the largest $\lambda$ such that the above holds.

From a different direction, 
the authors of \cite{monmarche2025local} investigated the local guarantees that can be obtained 
from the insights of \cite{chizat2022mean,nitanda2022convex} around uniform LSI, 
when $F_\tau$ is not linearly convex.
They showed how a ``local non-linear LSI'' condition can be verified in some settings --- that is, a displacement Polyak-Lojasiewicz inequality for $F_\tau$ on a Wasserstein neighborhood of a stationary measure $\nu$.
In such settings, 
MFLD is locally contracting in $F_\tau(\cdot) - F_\tau(\nu)$, and hence locally stable in Wasserstein distance.
Their proof method applies, notably, to the granular media equation with a double-well potential and squared-distance interaction kernel.

Finally, still in connection with the uniform LSI approach, we note that \cite[Prop.~5.1]{wang2024mean} proved local convergence in KL-divergence for the MFLD of a particular functional $F$ (on a sphere), with an exponential rate given by the LSI constant of the stationary measure. The proof can be generalized to general $F$ assuming 
uniform boundedness of $\nabla_x F''[\mu](x,x')$ or of $F''[\mu](x,x')$,
as shown in \autoref{prop:apx_MFLD_localKL:MFLD_localKL}.
This excludes, however, examples such as pairwise interacting particle systems on $\RR^d$ with squared-distance interaction kernel.

\paragraph{The mean-field Langevin descent-ascent (MFL-DA) dynamics.}
The MFL-DA PDE \eqref{eq:intro:MFL-DA} represents a natural min-max optimization dynamics for the computation of mixed Nash equilibria of two-player zero-sum games \cite{hsieh2019finding,domingo2020mean}. 
Despite its simplicity,
known global convergence guarantees 
are limited to cases where the entropy regularization $\tau$ is large, or where the payoff function $k(x,y)$ is convex-concave \cite{conger2024coupled,cai2024convergence} or additively separable. 
We refer to the open problem statement \cite{wang2024open} for a review of the associated literature.
In particular, \cite{lu2023two,an2025convergence} obtained global convergence guarantees for a two-timescale variant of the dynamics, reproduced as \eqref{eq:MFL-DA:2scale} below, which we also analyze.

\paragraph{Multi-species flows.}
The dynamics \eqref{eq:intro:Nspecies_flow} was put forward in the recent work of \cite{conger2025monotone} as a unifying framework for the multi-species mass-preserving flows arising in a variety of fields.
We refer to their Sections~1 and~5 for a full discussion of the related literature and for concrete examples.
Their main contribution was the analysis of the long-time behavior of \eqref{eq:intro:Nspecies_flow} assuming strong displacement monotonicity, a setting under which ideas from finite-dimensional monotone variational inequalities are readily applicable.
We note that their assumptions allow for both $\tau=0$ or $\tau>0$, since the (negative) differential entropy is displacement convex, and since their results are agnostic to the smoothness properties of the vector fields considered.

In the case $N=1$, the linear monotonicity condition displayed in \autoref{thm:intro:MFG} reduces to Lasry-Lions monotonicity, a classical condition in the mean-field game theory literature.
The long-time behavior of the mean-field game system (MFGS), a forward-backward PDE system describing the dynamics of games with an infinite number of indistinguishable players, was analyzed using this assumption by \cite{cardaliaguet2012longa,cardaliaguet2013longb,cardaliaguet2019longb,cirant2021longb}.
We emphasize, however, that our considered dynamics \eqref{eq:intro:Nspecies_flow} is structurally distinct from the MFGS: 
intuitively, \eqref{eq:intro:Nspecies_flow} corresponds to a generalization of gradient flow, whereas the MFGS (with a quadratic Hamiltonian) can be viewed as a coupled system of Hamiltonian flows \cite[Prop.~1 and Example~1]{chow2020wasserstein}.

It can be checked that the MFL-DA dynamics from the previous paragraph is an instance of linearly monotone multi-species flow with diffusion, with $N=2$.
Another example is given by the generalization of MFL-DA to $N$-player pairwise-zero-sum polymatrix continuous games, recently considered by \cite{lu2025convergence}, and which we discuss in \autoref{subsec:MFG:examples}.

\paragraph{Concurrent work.} This work is based on Chapter 2 of the first author's PhD thesis defended in December 2025. While finalizing this manuscript, we became aware of the preprint~\cite{seo2026local} which establishes closely related results using similar techniques. A detailed comparison will be included in a future revision.

\subsection{Notations} \label{subsec:intro:notations}

All integration symbols $\int$ are implicitly over $\RR^d$ unless specified.
We use $\norm{\cdot}$ to denote the $\ell^2$ norm of vectors in $\RR^d$ and
$\norm{\cdot}_{\mathrm{op}}$ for the corresponding operator norms of matrices and tensors over $\RR^d$.

The first variation of a functional $F: \PPP(\RR^d) \to \RR$ at $\mu$ is the function $F'[\mu]: \RR^d \to \RR$, unique up to an additive constant, if it exists, such that $F(\mu + \eps (\nu - \mu)) - F(\mu) = \eps \int F'[\mu] \d(\nu - \mu) + o(\eps)$ for any $\nu \in \PPP(\RR^d)$. The second variation $F''[\mu]: \RR^d \times \RR^d \to \RR$, as well as the $k$-th variation $F^{(k)}[\mu]: (\RR^d)^k \to \RR$ for $k \geq 3$, are defined similarly.
We may also write $\frac{\delta F(\mu)}{\delta \mu(x)}$ for $F'[\mu](x)$.

The pushforward of a measure $\mu$ by a mapping $T$ is the measure $T_\sharp \mu$ such that 
$\int \varphi \,\d(T_\sharp \mu) = \int (\varphi \circ T) \,\d\mu$ for any 
test 
function $\varphi$.
Equivalently, a random variable $X$ is distributed according to $\mu$ if and only if $T(X)$ is distributed according to $T_\sharp \mu$.


We denote by $\PPP_2(\RR^d)$ the set of probability measures on $\RR^d$ with finite second moments and by $W_2$ the $2$-Wasserstein distance,
that is, $W_2^2(\mu,\nu) = \min_\gamma \iint \norm{x-y}^2 \d\gamma(x,y)$ subject to $\gamma$ being a transport plan between $\mu$ and $\nu$. 
Moreover, we let $\PPP_2^{\AC}(\RR^d)$ be the subset of $\PPP_2(\RR^d)$ consisting of the measures which are absolutely continuous w.r.t.\ the Lebesgue measure. 

We say a functional $F: \PPP_2(\RR^d) \to \RR$ is $\beta$-displacement-smooth if $t \mapsto F(\mu_t)$ is $\beta$-smooth along any unit-speed Wasserstein geodesic $(\mu_t)_t$.
(A function $f: [0,1] \to \RR$ is called $\beta$-smooth if $| f(s) - f(t) - f'(t) (s-t) | \leq \frac \beta2 \abs{s-t}^2$ for all $s,t$.)

\paragraph{Function spaces.}
For any $\nu \in \PPP(\RR^d)$, we let 
$\LLL^2_\nu = \left\{ f: \RR^d \to \RR, \int \abs{f}^2 \d\nu <\infty \right\}$,
$\LLL^2_\nu \cap \{1\}^\perp = \left\{ f \in \LLL^2_\nu, \int f\, \d\nu = 0 \right\}$,
and $\bm\LLL^2_\nu = \left\{ \Phi: \RR^d \to \RR^d, \int \norm{\Phi}^2 \d\nu <\infty \right\}$.
We use $\innerprod{\cdot}{\cdot}_\nu$ to denote the inner product on all three of these Hilbert spaces.
Moreover, we denote by
$\CCC^\infty_c$ or $\CCC^\infty_c(\RR^d)$ (resp.\ $\CCC^\infty_c(\RR^d, \RR^d)$) the set of compactly supported $\CCC^\infty$-smooth real-valued (resp.\ vector-valued) functions,
and by $T_\nu \PPP_2(\RR^d) = \overline{\nabla \CCC^\infty_c}^{\,\bm\LLL^2_\nu}$ the subspace of $\bm\LLL^2_\nu$ consisting of gradient fields.

\paragraph{Functional inequalities.}
We write indifferently $\frac{\d\mu}{\d\nu}$ or $\frac{\mu}{\nu}$ for Radon-Nikodym derivatives. Expressions of the form $\nabla \log \mu$ should be interpreted as $\frac{\nabla \mu}{\mu}$. We may write $\log \mu$ instead of $\log \frac{\d\mu}{\d x}$ when it is clear from context that $\mu$ is absolutely continuous w.r.t.\ the Lebesgue measure.
We denote by $\KLdiv{\mu}{\nu} = \int \log \frac{\mu}{\nu} \,\d\mu$ the Kullback-Leibler divergence and by $\chisq{\mu}{\nu} = \int \left( \frac{\mu}{\nu}-1 \right)^2 \d\nu$ the $\chi^2$-divergence between two probability measures.

\begin{definition} \label{def:intro:LSI_T2_PI}
	We say a measure $\nu \in \PPP_2(\RR^d)$ satisfies the log-Sobolev inequality (LSI) with a constant $c_{\mathrm{LSI}}$ if
	\begin{equation*}
		\forall \mu \in \PPP_2(\RR^d),~ 
		\KLdiv{\mu}{\nu} \leq \frac{1}{2 c_{\mathrm{LSI}}} 
		\int \norm{\nabla \log \frac{\mu}{\nu}}^2 \d\mu.
	\end{equation*}
	We say $\nu$ satisfies the Talagrand inequality (T2) with a constant $c_{\mathrm{T2}}$ if
	\begin{equation*}
		\forall \mu \in \PPP_2(\RR^d),~ 
		\frac{c_{\mathrm{T2}}}{2} \, W_2^2(\mu, \nu) \leq \KLdiv{\mu}{\nu}.
	\end{equation*}
	Furthermore, we say $\nu$ satisfies Poincar\'e inequality (PI) with a constant $c_{\PI}$ if
	\begin{equation*}
		\forall f ~\text{s.t.} \int f \,\d\nu=0,~~ 
		\int \abs{f}^2 \d\nu \leq \frac{1}{c_{\mathrm{PI}}} \int \norm{\nabla f}^2 \d\nu.
	\end{equation*}
	It is classical that the optimal LSI, T2, resp.\ PI constants of any measure $\nu$ are ordered as
	$c_{\LSI} \leq c_{\mathrm{T2}} \leq c_{\PI}$ \cite{otto2000generalization}.
\end{definition}

\section{Heuristics and proof ideas} \label{sec:heuri_pfids}

In \autoref{subsec:heuri_pfids:heuri} we present an Otto calculus argument that heuristically suggests the result of \autoref{thm:intro:L2_MFLD}, 
and in \autoref{subsec:heuri_pfids:L2} we introduce the main ideas behind the theorem's proof, also used
in our analyses of MFL-DA and of multi-species flows.
The reader interested only in the results can safely skip ahead to the next section.

\subsection{Heuristics} \label{subsec:heuri_pfids:heuri}

Recall the following classical fact about gradient flows in finite dimension.
Note that it would be sufficient to assume $f$ is $\CCC^2$, but we state this weaker result for ease of comparison later on.

\begin{fact} \label{fact:findim}
    Consider a function $f \in \CCC^3(\RR^d)$ and a stationary point $x^*$ of its gradient flow, $\dot{x}_t = -\nabla f(x_t)$.
    Denote by 
    $\alpha_{\mathrm{loc}}$,
    $\alpha_{\mathrm{long}}$
    the rates of local resp.\ long-time exponential convergence to $x^*$,
    i.e., the largest constants such that
    \begin{itemize}
        \item For any $\eps>0$, there exist $C>0$ and a neighborhood $U$ of $x^*$ such that,
        for any initialization $x_0 \in U$,
        \begin{equation*}
            \forall t \geq 0,~ 
            x_t \in U
            ~~\text{and}~~
            \norm{x_t-x^*}^2 \leq C e^{-\alpha_{\mathrm{loc}} (1-\eps) t} \norm{x_0-x^*}^2.
        \end{equation*}
        %
        \item For any $\eps>0$ and any $x_0 \in \RR^d$ such that $\lim_{t \to \infty} x_t = x^*$, there exist $t_0, C>0$ such that
        \begin{equation*}
            \forall t \geq t_0,~
            \norm{x_t-x^*}^2 \leq C e^{-\alpha_{\mathrm{long}} (1-\eps) (t-t_0)}.
        \end{equation*}
    \end{itemize}
    If $\nabla^2 f(x^*)$ is positive-definite, then
    the local and the long-time rates coincide and are equal to $\alpha_{\mathrm{loc}} = \alpha_{\mathrm{long}} = 2 \sigma_{\min}(\nabla^2 f(x^*))$.%
\footnote{A more standard terminology would be to call ``local/long-time rates'' the quantities $\alpha_{\mathrm{loc}}/2$, $\alpha_{\mathrm{long}}/2$, i.e., to use the distance to $x^*$ as the error metric, instead of the squared distance. We chose to adjust the terminology for ease of comparison with our later results on convergence in $\chi^2$-divergence for MFLD.}
\end{fact}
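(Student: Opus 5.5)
Set $H = \nabla^2 f(x^*)$ and $\lambda = \sigma_{\min}(H) > 0$; since $H$ is symmetric positive-definite, $\lambda$ is its smallest eigenvalue. The plan is to prove a lower bound $\alpha \geq 2\lambda$ for both rates by a Lyapunov argument on $\norm{x_t - x^*}^2$. We then show the bound is sharp by looking at trajectories that leave along the bottom eigenvector of $H$.

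\emph{Lower bound.} Since $f \in \CCC^3$, Taylor's theorem gives $\nabla f(x) = H(x-x^*) + r(x)$ with $\norm{r(x)} \leq M\norm{x-x^*}^2$ on a ball $B(x^*,\rho)$. Put $u_t = \norm{x_t-x^*}^2$. As long as $x_t$ stays in the ball,
\begin{equation*}
\dot u_t = -2\langle x_t-x^*, H(x_t-x^*)\rangle - 2\langle x_t - x^*, r(x_t)\rangle \leq -2\lambda u_t + 2M u_t^{3/2}.
\end{equation*}
Given $\eps>0$, we choose $U = B(x^*,\delta)$ with $\delta \leq \rho$ and $M\delta \leq \eps\lambda$. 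Then $\dot u_t \leq -2\lambda(1-\eps)u_t$ on $U$. In particular $u$ is nonincreasing there, so $U$ is forward-invariant. Grönwall's lemma then gives the local bound with $C=1$.

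For the long-time rate, take a trajectory with $x_t \to x^*$. There is a time $t_0$ after which $x_t \in U$, and applying the local bound from $t_0$ onwards gives the long-time estimate with $C = \norm{x_{t_0}-x^*}^2$. This shows $\alpha_{\mathrm{loc}}, \alpha_{\mathrm{long}} \geq 2\lambda$.

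\emph{Sharpness.} We have to show no rate larger than $2\lambda$ works. The linearized flow $\dot y = -Hy$ started at the unit eigenvector $e$ of $\lambda$ gives $y_t = e^{-\lambda t}y_0$, so it decays exactly at rate $2\lambda$ in squared norm. We transfer this to the nonlinear flow by a matching lower differential inequality, applied along trajectories that are not degenerate.

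For $\alpha_{\mathrm{loc}}$, take $x_0 = x^* + s e$ with $s$ small. Write $p_t = \langle x_t - x^*, e\rangle$. The Taylor expansion gives
\begin{equation*}
\dot p_t = -\lambda p_t - \langle e, r(x_t)\rangle \geq -\lambda p_t - M u_t,
\end{equation*}
and the upper bound already proved gives $u_t \leq s^2 e^{-2\lambda(1-\eps)t}$. Substituting and integrating yields $p_t \geq e^{-\lambda t}\bigl(s - O(s^2)\bigr)$, which is positive for small $s$. Therefore $\norm{x_t-x^*}^2 \geq p_t^2 \gtrsim s^2 e^{-2\lambda t}$, and no uniform constant $C$ can make a rate larger than $2\lambda$ work; hence $\alpha_{\mathrm{loc}} = 2\lambda$.

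The same trajectory also converges to $x^*$ and satisfies the same lower bound for all $t$. So it shows that $\alpha_{\mathrm{long}} \leq 2\lambda$ as well, since the long-time definition must hold for every convergent initialization.

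\emph{Main obstacle.} The delicate step is the sharpness argument. There one has to control the quadratic remainder $r(x_t)$ well enough that the component $p_t$ along the slowest eigendirection cannot decay faster than $e^{-\lambda t}$. The key is to use the already-established upper bound to make the accumulated error summable: it contributes an integral of order $\int_0^\infty e^{\lambda t}\, s^2 e^{-2\lambda(1-\eps)t}\,\d t$, which is finite for $\eps < 1/2$.

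This only requires $\CCC^2$ regularity in the form of a Lipschitz Hessian near $x^*$, consistent with the remark preceding the statement. Everything else is routine Grönwall bookkeeping.
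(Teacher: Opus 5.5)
Your proposal is correct and follows essentially the same route as the paper. You use a Gr\"onwall bound on $\|x_t-x^*\|^2$ in a small forward-invariant ball (with $C=1$), eventual entry into that ball for the long-time rate, and sharpness via the component along a bottom eigenvector, where the already-proved upper bound makes the accumulated remainder integrable for $\varepsilon<1/2$. The only cosmetic difference is that you rule out rates above $2\lambda$ for $\alpha_{\mathrm{loc}}$ directly from the eigenvector trajectory, whereas the paper obtains this through $\alpha_{\mathrm{loc}} \leq \alpha_{\mathrm{long}} \leq 2\sigma_{\min}(\nabla^2 f(x^*))$.
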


\begin{proof}
	Fix $\eps>0$. Denote $M_3 \coloneqq \sup_{\norm{\xi - x^*} \leq 1} \norm{\nabla^3 f(\xi)}_{\mathrm{op}} < \infty$ since $f$ is $\CCC^3$. Let $r = \min\{1, \frac{2 \sigma \eps}{M_3}\}$ where $\sigma = \sigma_{\min}(\nabla^2 f(x^*))$.
	For any $x \in U \coloneqq B_{x^*,r}$ the ball centered at $x^*$ of radius $r$, we have $\nabla f(x) = \nabla f(x) - \nabla f(x^*) = \nabla^2 f(x^*) (x-x^*) + \frac12 \nabla^3 f(\xi) \left((x-x^*) \otimes (x-x^*)\right)$ for some $\xi \in [x^*, x] \subset B_{x^*, r} \subset B_{x^*,1}$, so that $\norm{\nabla^3 f(\xi)}_{\mathrm{op}} \leq M_3$. Thus for any $t$ such that $x_t \in U$,
	\begin{align*}
		\frac{d}{dt} \norm{x_t-x^*}^2 
		&= 2 \dot{x}_t^\top (x_t-x^*) 
		= -2 \nabla f(x_t)^\top (x_t-x^*) \\
		&\leq -2 (x_t-x^*)^\top \nabla^2 f(x^*) (x_t-x^*) + M_3 \norm{x_t-x^*}^3 \\
		&\leq -2 \Big( \sigma - \frac12 M_3 \norm{x_t-x^*} \Big) \norm{x_t-x^*}^2
		\leq -2 (1-\eps) \sigma \norm{x_t-x^*}^2,
	\end{align*}
	by definition of $U$.
	By Gr\"onwall's lemma, this proves a lower bound on the local convergence rate: $\alpha_{\mathrm{loc}} \geq 2 \sigma$.
	Here we can take $C=1$ for the constant prefactor.
	
    For the long-time convergence rate, fix $\eps>0$ and $x_0$ such that $\lim_{t \to \infty} x_t = x^*$. Let $C, U$ be as in the definition of $\alpha_{\mathrm{loc}}$. Since there exists $t_0$ such that $x_{t_0} \in U$, then by definition, $\alpha_{\mathrm{long}} \geq \alpha_{\mathrm{loc}}$.

    Thus $\alpha_{\mathrm{long}} \geq \alpha_{\mathrm{loc}} \geq 2\sigma$.
    Conversely, one can show that $\alpha_{\mathrm{long}} \leq 2 \sigma$ by choosing a unit eigenvector $v$ such that $\nabla^2 f(x^*) v = \sigma v$, an initialization $x_0 = x^* + \delta v$ with $0 \leq \delta \leq r = \min\{ 1, \frac{2\sigma \eps}{M_3} \}$, and
    lower-bounding
    \begin{align*}
    	\frac{d}{dt} v^\top (x_t-x^*)
    	= -v^\top \nabla^2 f(x_t)
    	&\geq -v^\top \nabla f(x^*) (x_t-x^*)
    	- \frac12 M_3 \norm{x_t-x^*}^2 \\
    	&\geq -\sigma v^\top (x_t-x^*)
    	- \frac12 M_3 \, \delta^2\, e^{-2\sigma (1-\eps) t}
   	\end{align*}
   	where we reused the results of the first paragraph.
   	By multiplying on both sides by $e^{\sigma t}$ and integrating, we obtain
   	\begin{align*}
   		\int_0^t \frac{d}{ds} \left( e^{\sigma s} v^\top (x_s-x^*) \right) \d s
   		&\geq -\int_0^t e^{\sigma s} \frac12 M_3 \, \delta^2 \, e^{-2 \sigma (1-\eps) s} \,\d s \\
   		v^\top (x_t - x^*)
   		&\geq e^{-\sigma t} \left( \delta - \frac12 M_3 \, \delta^2 \, \frac{1}{\sigma (1-2\eps)} \right),
   	\end{align*}
   	and the constant on the right-hand side is positive for $\delta$ small enough.
   	Then $\norm{x_t-x^*}^2 \geq \abs{v^\top (x_t-x^*)}^2 \geq \left( \delta - \frac12 M_3 \, \delta^2 \, \frac{1}{\sigma (1-2\eps)} \right)^2 e^{-2\sigma t}$,
   	hence $\alpha_{\mathrm{long}} \leq 2\sigma$ and so $\alpha_{\mathrm{long}} = \alpha_{\mathrm{loc}} = 2\sigma$.
\end{proof}

Since MFLD is the WGF of $F_\tau$,
it is natural to expect that its local and long-time convergence rates are analogously given by the Hessian of $F_\tau$, in the Wasserstein sense, at the stationary measure.
Otto calculus indeed provides such a notion of Wasserstein Hessian \cite[Chapter~15]{villani2008optimal}:
formally,
for a functional $\GGG: \PPP_2(\RR^d) \to \RR$ and $\mu \in \PPP_2^{\AC}(\RR^d)$, $\Hess_\mu \GGG$ is the symmetric bilinear operator over $T_\mu \PPP_2(\RR^d) = \overline{\nabla \CCC^\infty_c}^{\,\bm\LLL^2_\mu}$ such that $\forall \Phi, \Hess_\mu \GGG(\Phi, \Phi) = \restr{\frac{d^2}{ds^2}}{s=0} \GGG((\id+s\Phi)_\sharp \mu)$.
It is known that for displacement-smooth functionals~$F$ \cite[Proposition~19]{li2022transport}, 
\begin{equation*}
	\forall \mu, \forall \Phi,~
    \Hess_\mu F(\Phi,\Phi)
	=
	\iint F''[\mu](x,x') \,\d\left[ \nabla \cdot (\mu \Phi) \right](x) \,\d\left[ \nabla \cdot (\mu \Phi) \right](x')
	+ \int \Phi^\top \nabla^2 F'[\mu] \, \Phi \,\d\mu,
\end{equation*}
and that for differential entropy and for KL-divergence $\KLdiv{\cdot}{\nu}$ \cite{otto2000generalization},%
\footnote{To be clear, the formulas and computations appearing in this subsection should be understood as non-rigorous heuristics, as 
the expression $\int \trace((\nabla \Phi)^2) \,\d\mu$ is
not even well-defined for all $\Phi \in T_\mu \PPP_2(\RR^d)$.}
\begin{align*}
    \forall \mu, \forall \Phi,~ \quad
    \Hess_\mu H(\Phi,\Phi)
    &= \int \trace((\nabla \Phi)^2) \,\d\mu \\
	\Hess_\mu \KLdiv{\cdot}{\nu}(\Phi,\Phi)
	&= \int \trace((\nabla \Phi)^2) \,\d\mu + \int \Phi^\top (-\nabla^2 \log \nu) \, \Phi \,\d\mu.
\end{align*}

Now for MFLD, the first-order stationarity condition 
$\nabla F_\tau'[\nu] = 0$
rewrites 
$-\tau \log \nu = F'[\nu] + \cst$ 
(on $\support(\nu) = \RR^d$ \cite[Proposition~4.6]{chen2022uniform}). 
Thus the Hessian of $F_\tau = F + \tau H$ at the stationary measure $\nu$ is given by
\begin{align*}
	\forall \Phi \in T_\nu \PPP_2(\RR^d),~
    & \Hess_\nu F_\tau(\Phi,\Phi) 
	= \Hess_\nu F(\Phi,\Phi) 
	+ \tau \int \trace((\nabla \Phi)^2) \,\d\nu \\
	&~\qquad = \iint F''[\nu](x,x') \,\d\left[ \nabla \cdot (\nu \Phi) \right](x) \,\d\left[ \nabla \cdot (\nu \Phi) \right](x') \\
	&~\qquad \qquad
    + \int \Phi^\top \nabla^2 F'[\nu] \Phi \,\d\nu
    + \tau \int \trace((\nabla \Phi)^2) \,\d\nu \\
	&~\qquad = \iint F''[\nu](x,x') \,\d\left[ \nabla \cdot (\nu \Phi) \right](x) \,\d\left[ \nabla \cdot (\nu \Phi) \right](x')
	+ \tau \Hess_\nu \KLdiv{\cdot}{\nu}(\Phi,\Phi).
\end{align*}
If $F$ is linearly convex, then $\iint F''[\nu] \, \d(s \otimes s) \geq 0$ for any $s \in \MMM(\RR^d)$ with $\int \d s=0$, and so the first term on the last line is non-negative.
If additionally $\nu$ satisfies PI with some constant $c_{\PI}$, then we show in \autoref{sec:apx_OL} that
\begin{equation} \label{eq:heuri_pfids:heuri:Hess_ineqs}
	\forall \Phi \in T_\nu \PPP_2(\RR^d),~
	\Hess_\nu F_\tau(\Phi,\Phi) 
	\geq \tau \Hess_\nu \KLdiv{\cdot}{\nu}(\Phi,\Phi)
	\geq \tau \, c_{\PI} \int \norm{\Phi}^2 \d\nu.
\end{equation}
So heuristically one can expect MFLD to converge locally at a rate at least $2 \tau c_{\PI}$.

In order to formalize the above heuristic argument, a tempting approach is to mimick the proof of \autoref{fact:findim}: ``linearize'' the dynamics around $\nu$ and control the error.
However, 
WGFs generally do not satisfy the correct analog of the bound on $\nabla^3 f$,
and in the case of MFLD we do not even have the analog of $f$ being $\CCC^2$ because $H$ is not displacement-smooth.
Our proofs are instead inspired by the $\LLL^2$ convergence analysis of the overdamped Langevin dynamics.

\subsection{Proof idea: tracking the \texorpdfstring{$H_\nu^{-1}$}{H-1} norm} \label{subsec:heuri_pfids:L2}

Let us now explain the ideas behind our convergence analysis of MFLD in $\chi^2$-divergence, leading up to \autoref{thm:intro:L2_MFLD}.
Denote by $(\mu_t)_t$ the MFLD for a linearly convex and displacement-smooth functional $F$, and by $\nu$ the equilibrium measure, characterized by the stationarity condition $F'[\nu] + \tau \log \nu = \cst$.
Note that the MFLD PDE can be written as
\begin{equation*}
	\partial_t \mu_t 
	= \nabla \cdot (\mu_t \nabla F'[\mu_t]) + \tau \Delta \mu_t
	= \nabla \cdot \left( \mu_t \left( \tau \nabla \log \frac{\mu_t}{\nu} + \nabla F'[\mu_t] - \nabla F'[\nu] \right) \right).
\end{equation*}

To explain our proof strategy for the local $\chi^2$ convergence, first consider the following computation, which shows that local $\chi^2$ \emph{contraction} is unlikely unless a high temperature is assumed.
Denoting $\rho_t = \frac{\d\mu_t}{\d\nu}$, we have that for any $h: \RR_+ \to \RR$,
\begin{equation*}
	\frac{d}{dt} \int h(\rho_t) \, \d\nu 
	= \int h'(\rho_t) \, \d (\partial_t \mu_t)
	= -\int \nabla h'(\rho_t)^\top \left( \tau \nabla \log \frac{\mu_t}{\nu} + \nabla F'[\mu_t] - \nabla F'[\nu] \right) \d\mu_t
\end{equation*}
and so for $h(s) = (s-1)^2$,
\begin{align*}
	\frac{d}{dt} \chisq{\mu_t}{\nu} 
	&= -2 \tau \int \nabla \rho_t^\top \nabla \log \frac{\mu_t}{\nu} \,\d\mu_t
	- 2 \int \nabla \rho_t^\top \left( \nabla F'[\mu_t] - \nabla F'[\nu] \right) \d\mu_t \\
	&= -2 \tau \int \norm{\nabla \rho_t}^2 \d\nu
	- 2 \int \nabla \rho_t^\top \left( \nabla F'[\mu_t] - \nabla F'[\nu] \right) \rho_t \,\d\nu.
\end{align*}
The first term can be upper-bounded by $-2 \tau c_{\PI} \chisq{\mu_t}{\nu}$
by applying the definition of PI to $f = \rho_t-1$.
For the second term, 
if we additionally assume that $F$ has Lipschitz-continuous Wasserstein gradients, then
$\sup_{\RR^d} \norm{\nabla F'[\mu_t] - \nabla F'[\nu]} \leq \beta W_2(\mu_t, \nu)$
for some $\beta<\infty$,
and so 
\begin{align*}
    \abs{2 \int \nabla \rho_t^\top \left( \nabla F'[\mu_t] - \nabla F'[\nu] \right) \rho_t \,\d\nu} 
    &\leq
	2 \beta W_2(\mu_t, \nu) \int \norm{\nabla \rho_t} \rho_t \d\nu \\
	&\leq 2 \beta W_2(\mu_t, \nu) \sqrt{\int \norm{\nabla \rho_t}^2 \d\nu} 
	\underbrace{~\sqrt{\int \rho_t^2 \,\d\nu}~}_{\asymp~ 1+ \sqrt{\chisq{\mu_t}{\nu}}}.
\end{align*}
This bound on the second term is morally of order~2 in the distance between $\mu_t$ and $\nu$, so it fails to be locally negligible compared to the first term.
Thus, intuitively, the second term should instead be further expanded, so as to exploit the linear convexity of $F$. It turns out to be most convenient for this purpose to work directly in $\LLL^2_\nu$.

The case where $F$ is a quadratic functional, i.e., of the form \eqref{eq:intro:quadr_F}, essentially encapsulates all of the difficulty, so we focus on this case for the rest of the section. By the first-order stationarity condition, $F$ can be rewritten as
\begin{align*}
	F(\mu) &= F(\nu) + \int (-\tau \log \nu) \,\d(\mu-\nu) + \frac12 \iint k(x,y) \,\d(\mu-\nu)(x) \d(\mu-\nu)(y) \\
	\nabla F'[\mu](x) &= -\tau \nabla \log \nu + \int \nabla_x k(x, y) \,\d(\mu-\nu)(y).
\end{align*}
The MFLD can be re-expressed as a dynamics over $\LLL^2_\nu$ by posing $f_t = \frac{\d\mu_t}{\d\nu}-1$, as
\begin{align}
	\partial_t \mu_t &= \nabla \cdot \left( \mu_t \left( \tau \nabla \log \frac{\mu_t}{\nu} \right) \right) 
	+ \nabla \cdot \left( \mu_t \left( \nabla F'[\mu_t] - \nabla F'[\nu] \right) \right)   \nonumber \\
    &= \tau\, \nabla \cdot \left( \nu \nabla \frac{\mu_t}{\nu} \right)
    + \nabla \cdot \left( \mu_t \left( \int \nabla_x k(\cdot, y) \,\d(\mu_t-\nu)(y) \right) \right)   \nonumber \\
	\partial_t f_t &= - \tau L f_t + \frac1\nu \nabla \cdot (\nu (f_t+1) \nabla K f_t)   \nonumber \\
	&= -\tau L f_t - L K f_t + \frac1\nu \nabla \cdot (\nu f_t \nabla K f_t),
\label{eq:heuri_pfids:L2:MFLD_PDE_L2}
\end{align}
where $K, L$ are the operators
on $\LLL^2_\nu$
defined by $K f = \int k(\cdot, y) f(y) \d\nu(y)$ and $L f = -\frac1\nu \nabla \cdot (\nu \nabla f)$.%
\footnote{$L$ is precisely the generator of the overdamped Langevin dynamics associated to $\nu$. It is unbounded as an operator over $\LLL^2_\nu$, but in this section we do not discuss the associated technicalities explicitly, for ease of presentation.}
Note that $K$ and $L$ are both symmetric and positive-semi-definite, by linear convexity of $F$.
Treating the last term in the expression of $\partial_t f_t$ as an error term since it is of order~2 in $f_t$, we have
\begin{equation} \label{eq:heuri_pfids:L2:poorbound}
	\frac{d}{dt} \chisq{\mu_t}{\nu}
	= \frac{d}{dt} \norm{f_t}_\nu^2
	= 2 \innerprod{f_t}{\partial f_t}_\nu
	= -2 \tau \innerprod{f_t}{L f_t}_\nu - 2 \innerprod{f_t}{L K f_t}_\nu + \text{[error term of order~3]}.
\end{equation}
The term $-2 \tau \innerprod{f_t}{L f_t}_\nu$ can be upper-bounded thanks to PI, however it is not clear whether the term $-2 \innerprod{f_t}{L K f_t}_\nu$ has a sign.

Our main insight
is that the prefactor $L$ is cancelled out when we track, instead of $\norm{f_t}_\nu^2$, the $H^{-1}_\nu$ norm defined as
\begin{equation*}
	\forall f \in \LLL^2_\nu ~\text{s.t.}~ \int f \,\d\nu = 0,~
	\norm{f}_{H^{-1}_\nu}^2 = \innerprod{f}{L^{-1} f}_\nu,
\end{equation*}
which is well-defined since the PI for $\nu$ implies that $L$ is invertible on $\LLL^2_\nu \cap \{1\}^\perp$.
Indeed, we get
\begin{equation} \label{eq:heuri_pfids:L2:ddt_H-1}
	\frac{d}{dt} \norm{f_t}_{H^{-1}_\nu}^2
	= 2 \innerprod{L^{-1} f_t}{\partial f_t}_\nu
	= -2 \tau \innerprod{f_t}{f_t}_\nu - 2 \innerprod{f_t}{K f_t}_\nu + \text{[error term of order~3]}.
\end{equation}
Now $-2 \innerprod{f_t}{K f_t}_\nu \leq 0$ by linear convexity of $F$, and for the first term in $\tau$, we can still upper-bound it appropriately thanks to PI, as
\begin{equation*}
	\text{$\nu$ satisfies PI} ~~\iff~~ L \succeq c_{\PI} \id ~~\text{in $\LLL^2_\nu \cap \{1\}^\perp$} ~~\iff~~ L^{-1} \preceq c_{\PI}^{-1} \id ~~\text{in $\LLL^2_\nu \cap \{1\}^\perp$}
\end{equation*}
so that $\tau \innerprod{f_t}{f_t}_\nu \geq \tau c_{\PI} \innerprod{f_t}{L^{-1} f_t}_\nu = \tau c_{\PI} \norm{f_t}_{H_\nu^{-1}}^2$.
Hence we have
\begin{equation*}
	\frac{d}{dt} \norm{f_t}_{H^{-1}_\nu}^2
	\leq -2 \tau \, c_{\PI} \left( 1 - \text{[error term of order~1]} \right) \norm{f_t}_{H^{-1}_\nu}^2,
\end{equation*}
from which we can conclude to local contraction in  $\norm{\cdot}_{H^{-1}_\nu}^2$ by Gr\"onwall's lemma.
We can then deduce the local convergence in $\chi^2$ by restarting from \eqref{eq:heuri_pfids:L2:poorbound},
via an estimate $\abs{\innerprod{f}{LKf}_\nu} \lesssim \norm{\nabla f}_\nu \norm{f}_{H^{-1}_\nu}$ which holds under an appropriate regularity assumption on $k$ (\autoref{lm:MFLD_L2:localL2:estim_nablaKf_infty}).

Note that the linear convexity of $F$ only came into play to bound the second term on the right-hand side of \eqref{eq:heuri_pfids:L2:ddt_H-1}.
Accordingly, the linear convexity assumption can be relaxed to the condition
\begin{equation*}
    \forall f \in \LLL^2_\nu \cap \{1\}^\perp,~~
    -\innerprod{f}{K f}_\nu \leq \tau_0 \innerprod{f}{f}_\nu
\end{equation*}
for some $\tau_0<\tau$, which corresponds to a local form of relative weak convexity of $F$ w.r.t.\ $H$, as we discuss
below the statement of \autoref{assum:MFLD_A}.

\section[Local \texorpdfstring{$\LLL^2$}{L2} convergence of MFLD]{Local \texorpdfstring{$\LLL^2$}{L2} convergence of mean-field Langevin dynamics} \label{sec:MFLD_L2}

Throughout this section, we consider a functional $F$ and an equilibrium measure $\nu$ satisfying the following assumption.
Moreover, for the rest of this section, we use $(\mu_t)_t$ to denote the MFLD \eqref{eq:intro:MFLD}, i.e., the WGF of $F_\tau = F+\tau H$, and we set $f_t = \frac{\d\mu_t}{\d\nu} - 1$.
We recall that $\chisq{\mu_t}{\nu} = \norm{f_t}_\nu^2$, and for this reason we will also refer to convergence in $\chisq{\cdot}{\nu}$ as $\LLL^2$ convergence.

\begin{assumption} \label{assum:MFLD_A}
	The functional $F: \PPP_2(\RR^d) \to \RR$ is displacement-smooth and there exists a stationary measure $\nu$ for the MFLD, i.e., such that $F'[\nu] + \tau \log \nu = \cst$ on $\RR^d$. 
	Moreover, $F''[\nu]$ the second variation of $F$ at $\nu$ satisfies
    \begin{equation} \label{eq:MFLD_L2:assum_F''nu}
        \forall f \in \LLL^2_\nu \cap \{1\}^\perp,~~
        \iint F''[\nu](x,x') \, f(x) f(x') \,\d\nu(x) \d\nu(x')
        \geq -\tau_0 \int \abs{f}^2 \d \nu
    \end{equation}
    for some $0 \leq \tau_0 < \tau$.
	Furthermore, $\nu$ satisfies PI with a constant $c_{\PI}$, i.e.,
    $\int \norm{\nabla f}^2 \d\nu \geq c_{\PI} \int \abs{f}^2 \d\nu$ for all $f \in \LLL^2_\nu \cap \{1\}^\perp$.
\end{assumption}

To elucidate the assumption on $F''[\nu]$, let us mention two sufficient conditions for it.
In the simplified statement of our result in the introduction, \autoref{thm:intro:L2_MFLD}, we required global linear convexity of $F$, which corresponds to the second item with $\tau_0=0$.
\begin{itemize}
    \item When $\tau_0 = 0$, \eqref{eq:MFLD_L2:assum_F''nu} is equivalent to the function $(x,x') \mapsto F''[\nu](x,x')$ being a conditionally positive-semi-definite kernel, meaning that
    $\iint F''[\nu](x,x') \,\d s(x) \d s(x') \geq 0$ for all $s \in \MMM(\RR^d)$ such that $\int \d s = 0$.%
    \footnote{A sufficient condition for $F''[\nu]$ to be a conditionally positive-semi-definite kernel is if $\nu$ is a local minimizer of $F$ in the measure-space sense.
    However, in general, there is no reason for a stationary point of MFLD, which is the WGF of $F+\tau H$, to be a local minimizer for $F$ only.}
    \item
    The left-hand side of \eqref{eq:MFLD_L2:assum_F''nu} is precisely the measure-space Hessian of $F$ at $\nu$ applied to the tangent direction $f \nu$, i.e.,
    it is equal to $\restr{\frac{d^2}{d\theta^2}}{\theta=0} F(\nu + \theta f \nu)$.
    Likewise, the right-hand side is precisely $-\tau_0$ times the measure-space Hessian of $H$ at $\nu$ applied to the direction $f \nu$, as one can show by explicit computations.
    Thus \eqref{eq:MFLD_L2:assum_F''nu} is equivalent to asking that
    \begin{equation*}
        \forall s \in \MMM(\RR^d) ~\text{s.t.} \int \d s = 0 ~\text{and}~ \frac{\d s}{\d\nu} \in \LLL^2_\nu,~~
        \restr{\frac{d^2}{d\theta^2}}{\theta=0} (F+\tau_0 H)(\nu + \theta s) \geq 0.
    \end{equation*}
    This can be interpreted as a local form of linear convexity of $F+\tau_0 H$ ``at $\nu$''.
    In particular,
    it holds if $F+\tau_0 H$ is globally linearly convex.
\end{itemize}

\subsection{Local \texorpdfstring{$\LLL^2$}{L2} convergence} \label{subsec:MFLD_L2:localL2}

The main result of this section is the following. 
See also \autoref{thm:MFLD_L2:localL2:gen_full} for a more precise version.
\begin{theorem} \label{thm:MFLD_L2:localL2:gen_simple}
	Under \autoref{assum:MFLD_A}, additionally assume that
	\begin{equation*}
		M_{11} = \! \sup_{x,y \in \RR^d} \norm{\nabla_x \nabla_y F''[\nu]}_{\mathrm{op}},
		~~
		M_{111} = \!\!\! \sup_{\substack{\tmu \in \PPP_2(\RR^d)\\x,y,z \in \RR^d}} \!\! \norm{\nabla_x \nabla_y \nabla_z F'''[\tmu]}_{\mathrm{op}},
		~~
		M_{12} = \! \sup_{\substack{\tmu \in \PPP_2(\RR^d)\\x,y \in \RR^d}} \! \norm{\nabla_x \nabla_y^2 F''[\tmu]}_{\mathrm{op}}
	\end{equation*}
	are finite.
	Then for any $\eps>0$ small enough,
	there exist
	$r^{-1}, C = \mathrm{poly}\big( \eps^{-1}, (\tau-\tau_0)^{-1}, c_{\PI}^{-1}, M_{11}, \allowbreak M_{111}, \allowbreak M_{12} \big)$
	such that
	if $\chisq{\mu_0}{\nu} \leq r$ then
	\begin{equation*}
		\forall t \geq 0,~
		\chisq{\mu_t}{\nu} \leq C e^{-2 (\tau-\tau_0) c_{\PI} (1-\eps) t} \chisq{\mu_0}{\nu}.
	\end{equation*}
\end{theorem}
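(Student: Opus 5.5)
We follow the strategy of \autoref{subsec:heuri_pfids:L2}, now for general (non-quadratic) $F$. With $f_t = \frac{\d\mu_t}{\d\nu}-1$ and $L$ the generator defined there, we set $K f = \int F''[\nu](\cdot,y) f(y)\,\d\nu(y)$. Using the stationarity condition, we Taylor-expand the drift to second order:
\[
\nabla F'[\mu_t]-\nabla F'[\nu] = \nabla K f_t + R_t,
\qquad
R_t(x) = \int_0^1 (1-s)\iint \nabla_x F'''[\mu^s_t](x,y,z)\, f_t(y) f_t(z)\,\d\nu(y)\d\nu(z)\,\d s,
\]
where $\mu^s_t = \nu + s(\mu_t-\nu)$. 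The linear-response term $\nabla Kf_t$ is measured through $M_{11}$. For the remainder we integrate by parts in $y,z$, rewriting $f\nu$ as $-\nabla\cdot(\nu\nabla L^{-1}f)$. This gives
\[
\norm{R_t}_\infty \lesssim M_{111}\norm{f_t}_{H^{-1}_\nu}^2 ,
\]
since $\norm{\nabla L^{-1} f}_\nu^2 = \norm{f}_{H^{-1}_\nu}^2$. The bound $M_{12}$ plays the analogous role for the nonlinear part of $F''$ when it is needed in $\nabla Kf$-type terms at $\tmu\neq\nu$. The evolution then reads
\[
\partial_t f_t = -\tau L f_t - L K f_t + \tfrac1\nu\nabla\cdot\big(\nu f_t\nabla K f_t\big) + \tfrac1\nu\nabla\cdot\big(\nu (1+f_t) R_t\big).
\]

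\textbf{Step 1 ($H^{-1}_\nu$ contraction).} We differentiate $\norm{f_t}_{H^{-1}_\nu}^2 = \innerprod{f_t}{L^{-1}f_t}_\nu$. The linear terms give
\[
-2\tau\norm{f_t}_\nu^2 - 2\innerprod{f_t}{Kf_t}_\nu \le -2(\tau-\tau_0)\norm{f_t}_\nu^2 \le -2(\tau-\tau_0)c_{\PI}\norm{f_t}_{H^{-1}_\nu}^2
\]
by \autoref{assum:MFLD_A}, using $L^{-1}\preceq c_{\PI}^{-1}$. The nonlinear terms, after integrating by parts against $L^{-1}f_t$, are of the form
\[
-2\int \nabla L^{-1}f_t\cdot\big(f_t\nabla Kf_t + (1+f_t)R_t\big)\,\d\nu .
\]
We bound them by $\norm{f_t}_{H^{-1}_\nu}\big(\norm{f_t}_\nu\norm{\nabla Kf_t}_\infty + (1+\norm{f_t}_\nu)\norm{R_t}_\infty\big)$. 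The key estimate, as in \autoref{lm:MFLD_L2:localL2:estim_nablaKf_infty}, is
\[
\norm{\nabla K f}_\infty \le M_{11}\norm{f}_{H^{-1}_\nu},
\]
obtained by integrating by parts in $y$. Both nonlinear terms are then $\lesssim (\norm{f_t}_\nu+\norm{f_t}_\nu^2)\,\norm{f_t}_{H^{-1}_\nu}^2$. Hence, as long as $\norm{f_t}_\nu\le \delta(\eps)$,
\[
\frac{d}{dt}\norm{f_t}_{H^{-1}_\nu}^2 \le -2(\tau-\tau_0)c_{\PI}(1-\eps/2)\norm{f_t}_{H^{-1}_\nu}^2 .
\]

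\textbf{Step 2 ($\chi^2$ bound).} We differentiate $\norm{f_t}_\nu^2$. The first linear term is $-2\tau\norm{\nabla f_t}_\nu^2$. The second is
\[
-2\innerprod{f_t}{LKf_t}_\nu = -2\innerprod{\nabla f_t}{\nabla Kf_t}_\nu \le 2M_{11}\norm{\nabla f_t}_\nu\norm{f_t}_{H^{-1}_\nu}.
\]
The nonlinear terms are bounded by $\norm{\nabla f_t}_\nu\,(1+\norm{f_t}_\nu)\,\norm{f_t}_\nu\,\norm{f_t}_{H^{-1}_\nu}$ times constants. Applying Young's inequality, the bad terms can be absorbed into a fraction $\eta\tau\norm{\nabla f_t}_\nu^2$, leaving a source $C_\eta\norm{f_t}_{H^{-1}_\nu}^2$. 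Combining with PI, we get
\[
\frac{d}{dt}\norm{f_t}_\nu^2 \le -2\tau(1-\eta)c_{\PI}\norm{f_t}_\nu^2 + C'\norm{f_t}_{H^{-1}_\nu}^2 .
\]
Since $\tau\ge\tau-\tau_0$, the homogeneous rate is at least the target rate. Duhamel's formula with the Step 1 decay then gives
\[
\norm{f_t}_\nu^2\le C e^{-2(\tau-\tau_0)c_{\PI}(1-\eps)t}\norm{f_0}_\nu^2 .
\]
If the two rates coincide, a slight loss in $\eps$ handles the resonance. Here we use $\norm{f_0}_{H^{-1}_\nu}^2\le c_{\PI}^{-1}\norm{f_0}_\nu^2$. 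Cleaner still, one can run a single Lyapunov functional $\norm{f}_{H^{-1}_\nu}^2 + \lambda\norm{f}_\nu^2$ with small $\lambda$.

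\textbf{Step 3 (bootstrap).} Let $T$ be the first time at which $\norm{f_t}_\nu^2$ exceeds $\delta^2$. On $[0,T]$ the bound of Step 2 holds, and it yields $\norm{f_t}_\nu^2\le C r<\delta^2$ for $r=\delta^2/(2C)$, so $T=\infty$. All constants are polynomial in the listed quantities.

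\textbf{Main obstacle.} The hard part is controlling the remainder $R_t$ purely in weak norms: its sup bound must be quadratic in $\norm{f}_{H^{-1}_\nu}$, not merely in $\norm{f}_\nu$. This is what makes the argument close in Step 1, since $\norm{f}_\nu$ is not controlled by $\norm{f}_{H^{-1}_\nu}$, so the $\norm{f}_\nu$ prefactors there must come only with an extra small factor. It also requires justifying the integrations by parts and the regularity of $f_t$, for instance via smooth approximation.
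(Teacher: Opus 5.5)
Your proof is correct, but it handles the nonlinear remainder differently from the paper, and this changes how the argument closes.

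\textbf{Remainder estimate.} The paper (via \autoref{thm:MFLD_L2:localL2:gen_full}) Taylor-expands $\nabla R'[\mu_t]$ along the Wasserstein geodesic from $\nu$ to $\mu_t$ (\autoref{lm:MFLD_L2:localL2:estim_nablaR'_infty}). This gives $\sup\norm{\nabla R'[\mu_t]}\le\frac12(M_{12}+M_{111})W_2^2(\mu_t,\nu)\le M_R\norm{f_t}_\nu^2$, using the transport--Poincar\'e inequality. $M_{12}$ enters through the second derivative along the displacement interpolation.

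You instead expand along the flat interpolation $\nu+s(\mu_t-\nu)$ and integrate by parts in both $y$ and $z$, as in \autoref{lm:MFLD_L2:localL2:estim_nablaKf_infty}. This gives $\norm{R_t}_\infty\le\frac12M_{111}\norm{f_t}_{H^{-1}_\nu}^2$, which is stronger than the paper's bound. It needs neither $M_{12}$ nor the $W_2$--$\chi^2$ inequality, so you should drop your sentence about the role of $M_{12}$; your argument never uses it.

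To justify the double integration by parts, first subtract $h(y,z_0)+h(y_0,z)-h(y_0,z_0)$ from $h(y,z)$. This leaves the integral unchanged because $f_t$ is centered, and it makes $y\mapsto h(y,z)$ Lipschitz.

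\textbf{Closing the argument.} Write $z=\norm{f}_{H^{-1}_\nu}^2$ and $a=\norm{f}_\nu^2$. The paper's remainder contributes $M_R\sqrt{z_t}\,a_t^{3/2}$ to $\dot z_t$, so it needs the mixed functional $z_t+\gamma a_t$. You instead bootstrap on $\norm{f_t}_\nu$ and then run the ``contract $z$, then Duhamel for $a$'' scheme that the paper uses only for quadratic $F$. That is valid.

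With your bound, even the bootstrap is unnecessary. Since $z\sqrt a\le\sqrt z\,a/\sqrt{c_{\PI}}$, $z^{3/2}\le\sqrt z\,a/c_{\PI}$ and $z^{3/2}\sqrt a\le z\,a/\sqrt{c_{\PI}}$, you get $\dot z_t\le-2(\tau-\tau_0)a_t\,(1-O(\sqrt{z_t}+z_t))$. This is local $H^{-1}_\nu$ contraction for general $F$, extending the paper's remark for the quadratic case.

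Conversely, your ``main obstacle'' is overstated. A remainder bound quadratic in $\norm{f}_\nu$ already suffices, as the paper's proof (and your own bootstrap) shows.

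In short, the paper's route gives fully explicit constants, while yours gives weaker hypotheses (no $M_{12}$), a simpler structure, and a stronger intermediate contraction.
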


This theorem shows local convergence in $\chi^2$-divergence, but not directly via a local contraction, since we explained in \autoref{subsec:heuri_pfids:L2} that that is unlikely to hold except for large $\tau$.
Rather, we show local contraction in a mixed $H^{-1}_\nu$ - $\LLL^2_\nu$ metric as displayed in \eqref{eq:MFLD_L2:localL2:genthm_Wt} below.

The case where $F$ is a quadratic functional allows for more explicit constants, so we present this result separately.

\begin{theorem} \label{thm:MFLD_L2:localL2:quad}
	Under \autoref{assum:MFLD_A}, additionally suppose $F$ is quadratic, i.e., of the form \eqref{eq:intro:quadr_F}. In particular $F''[\mu] = k$ for all $\mu$, and $F$ can be rewritten as
	\begin{equation*}
		\forall \mu,~
		F(\mu) = F(\nu) + \int (-\tau \log \nu) \,\d(\mu-\nu) + \frac12 \iint k(x,x') \,\d(\mu-\nu)(x) \d(\mu-\nu)(x').
	\end{equation*}
	Further assume $M_{11} \coloneqq \sup_{x,x' \in \RR^d} \norm{\nabla_x \nabla_{x'} k}_{\mathrm{op}} < \infty$.
	Then for any $0<\eps < 1$,
	if $\chisq{\mu_0}{\nu} \leq \frac{(\tau-\tau_0)^2 c_{\PI}^2 \eps^2}{4 M_{11}^2}$ then
	\begin{equation*}
		\forall t \geq 0,~
		\chisq{\mu_t}{\nu} \leq
		\left( 1 + \frac{M_{11}^2}{\tau (\tau-\tau_0) c_{\PI}^2 \eps^2} \right)
		e^{-2 (\tau-\tau_0) c_{\PI} (1-\eps) t} \chisq{\mu_t}{\nu}.
	\end{equation*}
\end{theorem}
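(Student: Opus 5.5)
We follow the strategy of \autoref{subsec:heuri_pfids:L2} and track a combination of the $H^{-1}_\nu$ norm and the $\LLL^2_\nu$ norm of $f_t = \frac{\d\mu_t}{\d\nu}-1$. In the quadratic case the reformulation \eqref{eq:heuri_pfids:L2:MFLD_PDE_L2} is exact:
\begin{equation*}
	\partial_t f_t = -\tau L f_t - L K f_t + \tfrac1\nu \nabla\cdot(\nu f_t \nabla K f_t),
	\qquad K f = \int k(\cdot,y) f(y)\,\d\nu(y).
\end{equation*}

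\textbf{Step 1: $H^{-1}_\nu$ contraction.} Testing against $L^{-1} f_t$ gives
\begin{equation*}
	\frac{d}{dt}\norm{f_t}_{H^{-1}_\nu}^2 = -2\tau\norm{f_t}_\nu^2 - 2\innerprod{f_t}{Kf_t}_\nu - 2\int f_t\, \nabla(L^{-1}f_t)^\top \nabla K f_t \,\d\nu .
\end{equation*}
For the first two terms, \autoref{assum:MFLD_A} gives $-2\innerprod{f}{Kf}_\nu \le 2\tau_0\norm{f}_\nu^2$, so they are bounded by $-2(\tau-\tau_0)\norm{f_t}_\nu^2$.

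For the error term, we need the estimate $\sup\norm{\nabla K f} \le M_{11}\norm{f}_{H^{-1}_\nu}$. To get it, write $\nabla K f(x) = \int \nabla_x k(x,y) f(y)\,\d\nu(y)$ and substitute $f = L g$ with $g = L^{-1}f$. Integrating by parts gives $\nabla Kf(x) = \int \nabla_x\nabla_y k(x,y)\, \nabla g(y)\,\d\nu(y)$, hence $\norm{\nabla Kf(x)} \le M_{11}\norm{\nabla g}_\nu = M_{11}\norm{f}_{H^{-1}_\nu}$. Combining this with Cauchy--Schwarz, the error term is bounded by
\begin{equation*}
	2M_{11}\norm{f_t}_{H^{-1}_\nu}\norm{f_t}_\nu\norm{\nabla L^{-1}f_t}_\nu = 2M_{11}\norm{f_t}_\nu\norm{f_t}_{H^{-1}_\nu}^2 .
\end{equation*}
PI gives $\norm{f}_\nu^2 \ge c_{\PI}\norm{f}_{H^{-1}_\nu}^2$. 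Therefore, as long as $\norm{f_t}_\nu \le \eps(\tau-\tau_0)c_{\PI}/M_{11}$, we get
\begin{equation*}
	\frac{d}{dt}\norm{f_t}_{H^{-1}_\nu}^2 \le -2(\tau-\tau_0)c_{\PI}(1-\eps)\norm{f_t}_{H^{-1}_\nu}^2 .
\end{equation*}
A more careful split would keep a fraction of $\norm{f_t}_\nu^2$ on the right-hand side for Step~3.

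\textbf{Step 2: $\LLL^2$ evolution.} Testing against $f_t$ gives
\begin{equation*}
	\frac{d}{dt}\norm{f_t}_\nu^2 = -2\tau\norm{\nabla f_t}_\nu^2 - 2\innerprod{\nabla f_t}{\nabla Kf_t}_\nu - 2\int f_t\,\nabla f_t^\top\nabla Kf_t\,\d\nu .
\end{equation*}
Using the sup-bound on $\nabla Kf_t$, the last two terms are at most $2M_{11}\norm{f_t}_{H^{-1}_\nu}\norm{\nabla f_t}_\nu(1+\norm{f_t}_\nu)$. By Young's inequality, this is at most $\tau\norm{\nabla f_t}_\nu^2 + \tau^{-1}M_{11}^2(1+\norm{f_t}_\nu)^2\norm{f_t}_{H^{-1}_\nu}^2$. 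The remaining $-\tau\norm{\nabla f_t}_\nu^2$ is then bounded by $-\tau c_{\PI}\norm{f_t}_\nu^2$ using PI.

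\textbf{Step 3: Lyapunov functional and bootstrap.} Set $W_t = \norm{f_t}_\nu^2 + A\norm{f_t}_{H^{-1}_\nu}^2$ and choose $A$ so that the $\norm{f}_{H^{-1}_\nu}^2$ source term from Step~2 is absorbed by the $H^{-1}$ dissipation from Step~1. This forces $A \approx M_{11}^2/(\tau(\tau-\tau_0)c_{\PI}\eps)$, possibly up to rearranging factors of $\eps$ and $c_{\PI}$. With that choice we expect $\frac{d}{dt}W_t \le -2(\tau-\tau_0)c_{\PI}(1-\eps)W_t$ on the region where $\norm{f_t}_\nu$ is small.

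Since $W_0 \le (1+A/c_{\PI})\norm{f_0}_\nu^2$, Gr\"onwall's lemma yields the claimed prefactor $1+M_{11}^2/(\tau(\tau-\tau_0)c_{\PI}^2\eps^2)$. The required smallness is preserved for all times by a continuity argument, using that $\norm{f_t}_\nu^2 \le W_t \le W_0$.

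\textbf{Main obstacle.} The main difficulty is the bookkeeping in Step~3. The constants must be matched so that the smallness threshold is exactly $\frac{(\tau-\tau_0)^2c_{\PI}^2\eps^2}{4M_{11}^2}$ and the threshold is kept along the trajectory, which requires $C$ to be compatible with the bootstrap. A clean way to do this is to run the bootstrap on the $H^{-1}$ part, which contracts monotonically. Making the integrations by parts rigorous (the domain of $L^{-1}$, regularity of $f_t$) is a secondary technical step, which we would handle by density and the standard parabolic regularity of MFLD.
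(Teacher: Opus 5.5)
Your Steps~1 and~2 are correct and coincide with the paper's differential inequalities. Write $z_t=\norm{f_t}_{H^{-1}_\nu}^2$, $a_t=\norm{f_t}_\nu^2$, $b_t=\norm{\nabla f_t}_\nu^2$, $c=c_{\PI}$, $\ol\tau=\tau-\tau_0$, $M=M_{11}$. Then you obtain $\dot z_t\le-2\ol\tau a_t+2Mz_t\sqrt{a_t}$ and $\dot a_t\le-2\tau b_t+2M\sqrt{b_t}\sqrt{z_t}(1+\sqrt{a_t})$. The gap is Step~3. That is where the theorem's explicit constants are decided, and you leave it at ``we expect''. As set up, it cannot deliver those constants, for two reasons.

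\textbf{Threshold.} Step~1 puts the smallness on $\norm{f_t}_\nu$, which is not monotone. Your bootstrap $a_t\le W_t\le W_0\le(1+A/c)a_0$ only closes if $a_0$ is smaller than $\ol\tau^2c^2\eps^2/M^2$ by a factor $1+A/c$, i.e.\ by the prefactor $C$ itself. So the stated threshold $\ol\tau^2c^2\eps^2/(4M^2)$ is out of reach.

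\textbf{Prefactor.} Step~2 spends half the dissipation in Young's inequality and keeps only $-\tau c\,a_t$. This is below the target rate $2\ol\tau c(1-\eps)$, e.g.\ when $\tau_0=0$ and $\eps<1/2$. Test $\dot W_t\le-2\ol\tau c(1-\eps)W_t$ in the extremal case $a_t=cz_t$ with $\tau_0=0$: it requires $A\ge c(1-2\eps)/(2\eps)$ regardless of $M$. Hence $C=1+A/c$ stays of order $1/\eps$ as $M_{11}\to0$, whereas the claimed prefactor tends to~$1$. Also, your choice $A\approx M^2/(\tau\ol\tau c\eps)$ would give $\eps^{-1}$, not $\eps^{-2}$. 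Carried out carefully, your mixed functional gives the right rate with only $\mathrm{poly}$ constants. This is essentially what the paper does for the non-quadratic \autoref{thm:MFLD_L2:localL2:gen_full}.

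The missing idea is to put the smallness on $z_t$ and decouple the two estimates.

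\textbf{Contraction of $z_t$.} Since $z_t\le\sqrt{z_t}\sqrt{a_t/c}$, the cubic term obeys $2Mz_t\sqrt{a_t}\le 2Mc^{-1/2}\sqrt{z_t}\,a_t$. Hence $\dot z_t\le-2\ol\tau a_t\bigl(1-M\ol\tau^{-1}c^{-1/2}\sqrt{z_t}\bigr)$. The stated $\chi^2$ threshold gives $z_0\le a_0/c\le\ol\tau^2c\eps^2/(4M^2)$. Then $z_t$ is non-increasing and $z_t\le e^{-2\ol\tau c(1-\eps/2)t}z_0$. This is self-propagating, with no coupling to $a_t$.

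\textbf{Bound on $a_t$, afterwards.} The cubic part satisfies $2M\sqrt{b_t}\sqrt{z_t}\sqrt{a_t}\le 2Mc^{-1/2}\sqrt{z_0}\,b_t\le\eps\tau\, b_t$, using $\sqrt{a_t}\le c^{-1/2}\sqrt{b_t}$ and $z_t\le z_0$. It is absorbed into the dissipation, so no a priori bound on $a_t$ is needed. Young's inequality with weight $\eps\tau$ on the remaining term gives the linear inequality $\dot a_t\le-2\ol\tau c(1-\eps)a_t+\frac{M^2}{\eps\tau}z_0e^{-2\ol\tau c(1-\eps/2)t}$. Integrating, the rate gap $\ol\tau c\eps$ yields $e^{2\ol\tau c(1-\eps)t}a_t\le a_0+\frac{M^2}{\tau\ol\tau c\eps^2}z_0\le\bigl(1+\frac{M^2}{\tau\ol\tau c^2\eps^2}\bigr)a_0$. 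These are exactly the stated constants.
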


\begin{remark}
	In the case of a quadratic functional $F$, as a by-product of the proof, we also have local contraction in $H^{-1}_\nu$ norm.
	Specifically we have that, in the setting of \autoref{thm:MFLD_L2:localL2:quad}, for any $0<\eps < 1$,
	if $\norm{f_0}_{H^{-1}_\nu}^2 \leq \frac{(\tau-\tau_0)^2 c_{\PI} \, \eps^2}{M_{11}^2}$ then
	$\forall t \geq 0, \norm{f_t}_{H^{-1}_\nu}^2 \leq e^{-2 (\tau-\tau_0) c_{\PI} (1-\eps) t} \norm{f_0}_{H^{-1}_\nu}^2$.
\end{remark}

\begin{remark}
    For functionals $F$ such that
    $\tau_c = \inf \left\{ \tau' \geq 0 ; F + \tau' H ~\text{is globally linearly convex} \right\} < \infty$,
    our local exponential convergence rate estimate of $2 (\tau-\tau_c) c_{\PI}$
    complements the recent global convergence bounds of \cite{chizat2025convergence},
    whose exponential rate also scaled linearly with $\tau-\tau_c$ (for MFLDs over a torus).
\end{remark}

The proofs of \autoref{thm:MFLD_L2:localL2:gen_simple} and \autoref{thm:MFLD_L2:localL2:quad} are delayed to \autoref{subsec:MFLD_L2:localL2_proof}.
Let us now comment on three aspects of these results, to help delineate the directions in which they can or cannot be improved.

\subsubsection{Tightness of the rate for overdamped Langevin dynamics}

For linear functionals $F$, i.e., of the form $F(\mu) = \int V\, \d\mu$ for some $V: \RR^d \to \RR$, MFLD reduces to the overdamped Langevin dynamics associated to $\nu \propto e^{-V(x)/\tau} \d x$ (with time rescaled by $\tau$):
\begin{equation} \label{eq:MFLD_L2:localL2:OL_PDE}
    \partial_t \mu_t = \nabla \cdot (\mu_t \nabla V) + \tau \Delta \mu_t.
\end{equation}
It is well-known that,
under mild regularity assumptions on $V$, the long-time convergence rate of this dynamics is precisely given by the PI constant of $\nu$,
as the following proposition formalizes.

\begin{proposition} \label{prop:MFLD_L2:localL2:OL}
    Let $\nu \propto e^{-V(x)/\tau} \d x$ be a probability measure that satisfies PI and denote by $c_{\PI}$ its optimal PI constant. 
    Suppose that the embedding $H^1_\nu \subset \LLL^2_\nu$ is compact, where $H^1_\nu$ is the Sobolev space $\left\{ f \in \LLL^2_\nu ; \int \norm{\nabla f}^2 \d\nu < \infty \right\}$.
    Then for any $\mu_0 \in \PPP_2(\RR^d)$ such that $\frac{\d\mu_0}{\d\nu} \in \LLL^2_\nu$, the solution $(\mu_t)_t$ of \eqref{eq:MFLD_L2:localL2:OL_PDE} satisfies 
    $
        \forall t \geq 0,~ \chisq{\mu_t}{\nu} \leq e^{-2 \tau c_{\PI} t} \chisq{\mu_0}{\nu}
    $.
    Conversely, there exists $\mu_0 \in \PPP_2(\RR^d)$ with $\frac{\d\mu_0}{\d\nu} \in \LLL^2_\nu$ such that this inequality holds with equality for all $t$.
\end{proposition}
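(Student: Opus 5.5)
The plan is to work directly with the linear Fokker--Planck equation in $\LLL^2_\nu$. Set $f_t = \frac{\d\mu_t}{\d\nu} - 1$. Since $V = -\tau\log\nu + \cst$, the PDE \eqref{eq:MFLD_L2:localL2:OL_PDE} rewrites as $\partial_t \mu_t = \tau \nabla\cdot(\nu \nabla \frac{\mu_t}{\nu})$, that is,
\begin{equation*}
\partial_t f_t = -\tau L f_t, \qquad L f = -\tfrac1\nu \nabla\cdot(\nu\nabla f).
\end{equation*}
This is exactly \eqref{eq:heuri_pfids:L2:MFLD_PDE_L2} with $K=0$, so there are no nonlinear terms. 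The operator $L$, defined as the Friedrichs extension of the Dirichlet form $\int \norm{\nabla f}^2\d\nu$ on $H^1_\nu$, is nonnegative and self-adjoint, and $f_t = e^{-\tau t L} f_0$. Mass is preserved, so $f_t \in \LLL^2_\nu\cap\{1\}^\perp$ for all $t$.

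For the upper bound, I would differentiate:
\begin{equation*}
\frac{d}{dt}\norm{f_t}_\nu^2 = -2\tau\innerprod{f_t}{Lf_t}_\nu = -2\tau\int\norm{\nabla f_t}^2\d\nu \le -2\tau c_{\PI}\norm{f_t}_\nu^2,
\end{equation*}
using PI applied to the centered function $f_t$, and then conclude by Gr\"onwall. To make this rigorous, I would argue through the spectral theorem rather than differentiating a weak solution. We have $\norm{e^{-\tau tL}f_0}^2 = \int e^{-2\tau t\lambda}\,\d\langle E_\lambda f_0,f_0\rangle$, and on $\{1\}^\perp$ the spectrum of $L$ lies in $[c_{\PI},\infty)$ because $L \succeq c_{\PI}$ there. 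This gives the bound directly for all $f_0\in\LLL^2_\nu$. The remaining point is to identify the PDE solution with the semigroup solution, via uniqueness of $\LLL^2_\nu$-valued weak solutions, which I would take as standard.

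For sharpness, I would use compactness. Since $H^1_\nu \hookrightarrow \LLL^2_\nu$ is compact, $(L+1)^{-1}$ is compact, so $L$ has discrete spectrum with eigenfunctions. The bottom of the spectrum of $L$ on $\{1\}^\perp$ is the infimum of the Rayleigh quotient $\int\norm{\nabla f}^2\d\nu / \int f^2\d\nu$, which equals the optimal $c_{\PI}$. By discreteness this infimum is attained by an eigenfunction $\phi$ with $L\phi = c_{\PI}\phi$ and $\int\phi\,\d\nu = 0$.

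The main obstacle is that $\mu_0 = (1+\delta\phi)\nu$ must be a nonnegative probability measure with finite second moment, while $\phi$ need not be bounded. I would handle this by splitting $\phi = \phi^+ - \phi^-$, noting that $\int\phi^+\d\nu = \int\phi^-\d\nu = m > 0$, and taking $\mu_0 = \phi^+\nu/m$, or instead a combination $\frac{\d\mu_0}{\d\nu} = g$ with $g\ge 0$ whose centered part is proportional to $\phi$. For the equality case, the simplest choice is $g = 1 + \phi/\norm{\phi^-}_\infty$ when $\phi$ is bounded below. In general, I would check whether the first eigenfunction can be taken bounded below; if that fails, the fallback is $\mu_0$ with $\frac{\d\mu_0}{\d\nu} - 1 = \phi$ after rescaling, which still requires $\phi \ge -1$. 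Either way, $f_t = e^{-\tau c_{\PI}t}f_0$ exactly, so $\chisq{\mu_t}{\nu} = e^{-2\tau c_{\PI}t}\chisq{\mu_0}{\nu}$ holds with equality.

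Finite second moments of $\mu_0$ follow from Cauchy--Schwarz, $\int\norm{x}^2\d\mu_0 \le \norm{g}_\nu\,(\int\norm{x}^4\d\nu)^{1/2}$. The fourth moment of $\nu$ is finite because PI implies exponential integrability of $\nu$.
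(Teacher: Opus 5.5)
Your upper bound is fine and matches the paper's argument: differentiate $\norm{f_t}_\nu^2$, apply PI to the centered $f_t$, and use Gr\"onwall, with the spectral theorem supplying rigor.

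\textbf{The gap is in the converse.} In the eigenbasis, $\chisq{\mu_t}{\nu}=\sum_{k\ge1}e^{-2\tau\alpha_k t}\abs{\innerprod{f_0}{g_k}_\nu}^2$. Equality for all $t>0$ therefore forces $f_0$ to lie in the $c_{\PI}$-eigenspace $E$ of $L$. This has three consequences for your plan:
\begin{itemize}
\item Your first option $\mu_0=\phi^+\nu/m$ does not give equality. Here $f_0=\phi^+/m-1$ is not proportional to $\phi$, so your ``either way'' claim is false.
\item Your fallback is circular. Everything hinges on finding $0\ne\phi\in E$ with $1+\phi\ge 0$ $\nu$-a.e., and you leave this as something to ``check''.
\item It cannot be checked in general. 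Take $V(x)=\norm{x}^2/2$, i.e.\ $\nu=\mathcal{N}(0,\tau I_d)$, which satisfies PI with $c_{\PI}=1/\tau$ and the compact embedding. Then $E$ is exactly the space of linear functions $a\cdot x$, and $1+a\cdot x\ge0$ everywhere forces $a=0$.
\end{itemize}
So no probability measure other than $\mu_0=\nu$ attains equality, and for $\mu_0=\nu$ the equality is the vacuous $0=0$. The converse is true only in this trivial sense.

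The paper's own proof has the same hole: it simply sets $f_0=g_1$, which implicitly allows signed initial data. You have two honest ways to repair the sharpness part:
\begin{itemize}
\item Allow signed $\mu_0$. The linear equation $\partial_t f_t=-\tau Lf_t$ and $\norm{f_t}_\nu^2$ make sense for any $f_0\in\LLL^2_\nu\cap\{1\}^\perp$, and then $f_0=\phi$ gives exact equality.
\item Keep $\mu_0$ a probability measure and prove only that the rate cannot be improved. Your $\mu_0=\phi^+\nu/m$ already does this. It satisfies $\innerprod{f_0}{\phi}_\nu=\norm{\phi^+}_\nu^2/m>0$, so $\Pi_E f_0\ne0$, where $\Pi_E$ is the orthogonal projection onto $E$. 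Hence $\chisq{\mu_t}{\nu}\ge \norm{\Pi_E f_t}_\nu^2=e^{-2\tau c_{\PI}t}\norm{\Pi_E f_0}_\nu^2$ for all $t$.
\end{itemize}
Exact equality with a genuine probability measure needs an extra hypothesis: that $E$ contains a nonzero element bounded below, as happens for bounded eigenfunctions on a compact manifold. You would have to state this assumption explicitly.
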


\begin{proof}
    Consider the unbounded symmetric operator $L = -\frac1\nu \nabla \cdot (\nu \nabla \bullet)$ over $\LLL^2_\nu$.
    The PI and compact embedding assumptions ensure that 
    $\LLL^2_\nu$ has an orthonormal basis $(g_0=1, g_1, g_2, ...)$ of eigenfunctions of $L$ with associated eigenvalues $\alpha_0 = 0 < \alpha_1 = c_{\PI} \leq \alpha_2 \leq ...$
    \cite[Remark~1.1]{cao2023explicit}. 
    Moreover \eqref{eq:MFLD_L2:localL2:OL_PDE} is equivalent to $\partial_t f_t = -\tau L f_t$ with the change of variables $f_t = \frac{\mu_t}{\nu} - 1$.
    The first part of the proposition follows by noting that
    $\frac{d}{dt} \chisq{\mu_t}{\nu} = \frac{d}{dt} \norm{f_t}_\nu^2 = -2 \tau \innerprod{f_t}{L f_t}_\nu \leq 2 \tau c_{\PI} \norm{f_t}_\nu^2$.
    The second part of the proposition follows by the same computation applied to $f_0 = g_1$, 
	since $f_t$ is then proportional to $g_1$ for all $t$.
\end{proof}

Thus, our estimate of the local convergence rate of MFLD is tight in the special case of the overdamped Langevin dynamics.
Note that in this case, the assumption \eqref{eq:MFLD_L2:assum_F''nu} holds with $\tau_0 = 0$.
We leave open the question of determining examples
of $F$ such that \eqref{eq:MFLD_L2:assum_F''nu} holds only for some $\tau_0>0$ and for which our rate estimate is tight.

\subsubsection{Range of \texorpdfstring{$\tau_0$}{tau0} and near-necessity of (\ref{eq:MFLD_L2:assum_F''nu})} \label{subsubsec:MFLD_localL2:range_of_tau0}

Our assumption that \eqref{eq:MFLD_L2:assum_F''nu} holds for some $\tau_0<\tau$ is ``nearly'' necessary for a convergence guarantee such as ours to exist,
in the sense that \eqref{eq:MFLD_L2:assum_F''nu} with $\tau_0 = \tau$ is necessary for local stability of MFLD.
This is the content of the next proposition, 
whose proof is delayed to \autoref{subsec:MFLD_L2:delayed_pfs}.

\begin{proposition} \label{prop:MFLD_L2:localL2:tau0=tau_unstable}
    Consider $F: \PPP_2(\RR^d) \to \RR$ displacement-smooth, $\tau>0$, and $\nu$ a stationary measure of the associated MFLD assumed to satisfy PI.
    Further assume that the quantities $M_{11}$ and $M_{111}$ in the statement of \autoref{thm:MFLD_L2:localL2:gen_simple} are finite.
    Suppose that there exists $f \in \LLL^2_\nu \cap \{1\}^\perp$ such that
    \begin{equation*}
        \iint F''[\nu](x,x') \, f(x) f(x') \,\d\nu(x) \d\nu(x')
        < -\tau \int \abs{f}^2 \d \nu.
    \end{equation*}
    Then for any $r>0$, there exists $\mu_0 \in \PPP_2(\RR^d)$ with $\chisq{\mu_0}{\nu} \leq r$ such that
    the MFLD $(\mu_t)_t$ initialized at $\mu_0$ does not converge to $\nu$ in Wasserstein distance.
\end{proposition}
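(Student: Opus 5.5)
The plan is to argue by contradiction using the free energy $F_\tau = F + \tau H$ as a Lyapunov functional. MFLD is the Wasserstein gradient flow of $F_\tau$, so $t \mapsto F_\tau(\mu_t)$ is nonincreasing. I will build initializations $\mu_0$ arbitrarily close to $\nu$ in $\chi^2$ with $F_\tau(\mu_0) < F_\tau(\nu)$. If such an MFLD converged to $\nu$ in $W_2$, then lower semicontinuity would give $F_\tau(\nu) \leq \liminf_t F_\tau(\mu_t) \leq F_\tau(\mu_0) < F_\tau(\nu)$, a contradiction. The semicontinuity comes from two facts: $F$ is $W_2$-continuous because it is displacement-smooth, and $H$ is lower semicontinuous under weak convergence with bounded second moments.

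\textbf{Step 1 (reduction to bounded $f$).} Let $K$ be the operator $Kg = \int F''[\nu](\cdot, y) g(y)\,\d\nu(y)$ restricted to $\LLL^2_\nu \cap \{1\}^\perp$. I first check that $K$ is bounded there. Against mean-zero test functions one may replace $F''[\nu](x,y)$ by the symmetrized increment $F''[\nu](x,y) - F''[\nu](x,0) - F''[\nu](0,y) + F''[\nu](0,0)$, which is bounded by $M_{11}\norm{x}\norm{y}$. Since $\nu$ has finite second moments, this gives $\abs{\innerprod{g}{Kg}_\nu} \leq M_{11} \big(\int \norm{x}\abs{g}\,\d\nu\big)^2 \leq M_{11} \int \norm{x}^2 \d\nu\, \norm{g}_\nu^2$. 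The strict inequality $\innerprod{f}{Kf}_\nu < -\tau \norm{f}_\nu^2$ is an open condition in $\LLL^2_\nu$. Truncating $f$ and recentering therefore preserves it, so I may assume $f$ is bounded, say $\norm{f}_\infty \leq B$.

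\textbf{Step 2 (second-order expansion).} Set $\mu_0^\delta = \nu(1+\delta f)$ for $0 < \delta < 1/(2B)$. This is a probability measure in $\PPP_2$ with $\chisq{\mu_0^\delta}{\nu} = \delta^2 \norm{f}_\nu^2 \leq r$ for $\delta$ small. I then expand $\theta \mapsto F_\tau(\nu + \theta f\nu)$ at $\theta = 0$.
\begin{itemize}
\item The first-order term vanishes by the stationarity condition $F'[\nu] + \tau \log\nu = \cst$, because $\int f\,\d\nu = 0$.
\item The second-order term is $\frac{\delta^2}{2}\big(\innerprod{f}{Kf}_\nu + \tau \norm{f}_\nu^2\big) < 0$. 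The entropy part comes from the exact identity $H(\nu(1+\delta f)) - H(\nu) - \delta\int f \log\nu\,\d\nu = \int (1+\delta f)\log(1+\delta f)\,\d\nu$, which expands as $\frac{\delta^2}{2}\norm{f}_\nu^2 + O(\delta^3 B^3)$.
\item For the cubic remainder of $F$, I bound $\frac{d^3}{d\theta^3} F(\nu + \theta f\nu) = \iiint F'''[\tmu]\,\d(f\nu)^{\otimes 3}$ uniformly in $\tmu$. Since the signed measure $f\nu$ has zero mass, the same increment trick as in Step 1 with $M_{111}$ gives a bound $M_{111}\big(\int \norm{x}\abs{f}\,\d\nu\big)^3$.
\end{itemize}
Hence $F_\tau(\mu_0^\delta) - F_\tau(\nu) = -c\,\delta^2 + O(\delta^3)$ for some $c > 0$, which is negative for $\delta$ small.

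\textbf{Step 3 (conclusion).} Choose $\delta$ small enough that both $\chisq{\mu_0^\delta}{\nu} \leq r$ and $F_\tau(\mu_0^\delta) < F_\tau(\nu)$ hold, and run the Lyapunov argument of the first paragraph. The energy-dissipation inequality $F_\tau(\mu_t) \leq F_\tau(\mu_0)$ for the MFLD is standard gradient-flow theory.

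The main obstacle is the rigorous cubic remainder for $F$ in Step 2. This requires justifying the third-variation Taylor formula along the linear interpolation, and that $M_{111}$ controls it through first moments of $\abs{f}\nu$ even though $F'''$ itself may grow. If that is awkward, I would instead use displacement-smoothness together with $M_{11}$ on a slightly perturbed $F''[\tmu]$ near $\nu$.
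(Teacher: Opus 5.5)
Your proposal is correct and follows the paper's proof. Both reduce to a bounded mean-zero $f$ (the paper takes $f\in\mathcal C^\infty_c$) using continuity of the quadratic form, perturb to $\mu^\delta=(1+\delta f)\nu$, and show $F_\tau(\mu^\delta)<F_\tau(\nu)$ by a second-order expansion in $\delta$ that uses stationarity; both then conclude from the monotonicity of $F_\tau$ along the flow and its lower semicontinuity under $W_2$ convergence. The differences are only technical: you bound the quadratic form and the cubic remainder by mixed-difference estimates against moments of $\nu$, whereas the paper uses the PI-based bound $\|Kg\|_\nu\le c_{\mathrm{PI}}^{-1}M_{11}\|g\|_\nu$ and proves continuity of $\delta\mapsto\iint F''[\mu^\delta]ff\,\mathrm d\nu\,\mathrm d\nu$ at $0$ via $\phi=L^{-1}f$ and a $W_1$-Lipschitz estimate with constant $M_{111}$.
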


We suspect that local convergence bounds with an algebraic rate can still be obtained assuming only that \eqref{eq:MFLD_L2:assum_F''nu} holds with $\tau_0 = \tau$, 
similar to \cite[part~2 of Prop.~1]{monmarche2025local} or \cite[part~2 of Thm.~1.5]{chizat2025convergence},
but that is out of the scope of this paper.

On the other hand, the fact that we only consider non-negative values for $\tau_0$ in \autoref{assum:MFLD_A} is simply due to the fact that \eqref{eq:MFLD_L2:assum_F''nu} provably cannot hold for $\tau_0 < 0$ as soon as $F''[\nu]$ is regular enough. We show this fact in the next proposition, whose proof is also delayed to \autoref{subsec:MFLD_L2:delayed_pfs}.
A similar result appeared in \cite[Lemma~39]{duncan2023geometry}. 

\begin{proposition} \label{prop:MFLD_L2:localL2:tau0>=0}
    Consider any $\nu \in \PPP_2(\RR^d)$ satisfying PI and any symmetric $k: \RR^d \times \RR^d \to \RR$ such that $\iint \norm{\nabla_x \nabla_{x'} k(x,x')}_{\mathrm{op}}^2 \allowbreak \d\nu(x) \d\nu(x') < \infty$.
    Then the condition
    \begin{equation*}
    	\forall f \in \LLL^2_\nu \cap \{1\}^\perp,~~
    	\iint k(x,x') \, f(x) f(x') \,\d\nu(x) \d\nu(x')
    	\geq \alpha \int \abs{f}^2 \d \nu
    \end{equation*}
    can hold only for $\alpha \leq 0$.
\end{proposition}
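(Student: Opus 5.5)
The plan is to argue by contradiction: suppose the condition holds for some $\alpha>0$. We then exhibit functions $f \in \LLL^2_\nu \cap \{1\}^\perp$ with $\norm{f}_\nu = 1$ for which $\iint k \, f\otimes f \,\d(\nu\otimes\nu)$ is arbitrarily small. The key estimate is the one already used heuristically in \autoref{subsec:heuri_pfids:L2}: the quadratic form of $k$ is controlled by the $H^{-1}_\nu$ norm, and $H^{-1}_\nu$ is strictly weaker than $\LLL^2_\nu$.

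\emph{Step 1: $H^{-1}_\nu$ bound.} Let $L = -\frac1\nu \nabla\cdot(\nu\nabla\bullet)$ as before, and take $f = Lg$ for a nice $g$ (say $g \in \CCC^\infty_c$, or more generally $g$ in the domain of $L$). Then $\int f\,\d\nu = 0$ automatically. Integrating by parts once in $x$ and once in $x'$ gives
\begin{equation*}
	\iint k(x,x') f(x) f(x') \,\d\nu(x)\d\nu(x') = \iint \nabla g(x)^\top \nabla_x\nabla_{x'} k(x,x') \, \nabla g(x') \,\d\nu(x)\d\nu(x').
\end{equation*}
By Cauchy--Schwarz on $\nu\otimes\nu$, the right-hand side is bounded in absolute value by $M\,\norm{\nabla g}_\nu^2$, where $M^2 = \iint \norm{\nabla_x\nabla_{x'}k}_{\mathrm{op}}^2 \,\d\nu\,\d\nu$. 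Since $\norm{\nabla g}_\nu^2 = \innerprod{g}{Lg}_\nu = \innerprod{f}{L^{-1}f}_\nu = \norm{f}_{H^{-1}_\nu}^2$, we get $\abs{\iint k\, f\otimes f \,\d\nu\otimes\d\nu} \leq M \norm{f}_{H^{-1}_\nu}^2$. By density this extends to all $f$ in the range of $L$ on $\LLL^2_\nu \cap \{1\}^\perp$, which is the whole space since PI makes $L$ invertible there. Justifying the integrations by parts (boundary terms, and the integrability provided by the $\LLL^2$ assumption on $\nabla_x\nabla_{x'}k$) is the routine technical part; I would first do it for $g \in \CCC^\infty_c$ and then pass to limits.

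\emph{Step 2: $\LLL^2_\nu$ is not controlled by $H^{-1}_\nu$.} It remains to find $f_n \in \LLL^2_\nu\cap\{1\}^\perp$ with $\norm{f_n}_\nu = 1$ and $\norm{f_n}_{H^{-1}_\nu}\to 0$. Combined with Step 1, this gives $\alpha = \alpha\norm{f_n}_\nu^2 \leq M\norm{f_n}_{H^{-1}_\nu}^2 \to 0$, a contradiction. By PI, $L^{-1}$ is a bounded, positive, injective self-adjoint operator on $\LLL^2_\nu\cap\{1\}^\perp$. If its spectrum were bounded away from $0$, then $L$ would be bounded, i.e.\ $\int\norm{\nabla g}^2\d\nu \leq C\int g^2\d\nu$ for all smooth $g$. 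This fails for oscillating test functions such as $g_n(x) = \varphi(x)\sin(n x_1)$ with $\varphi\in\CCC^\infty_c$ supported where $\nu$ has positive mass: one has $\norm{g_n}_\nu = O(1)$, while $\norm{\nabla g_n}_\nu \gtrsim n$ by a Riemann--Lebesgue-type argument. Thus $0$ lies in the spectrum of $L^{-1}$, and spectral projections onto $[0,\delta]$ provide unit vectors $f$ with $\norm{f}_{H^{-1}_\nu}^2 \leq \delta$.

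The main obstacle is making Step 2 (and the integrations by parts in Step 1) rigorous for a general $\nu\in\PPP_2(\RR^d)$ satisfying PI. One needs enough regularity of $\nu$ for $L$ to make sense, and one must rule out degenerate cases. PI does exclude $\nu$ being a finite combination of atoms unless it is a single Dirac, and in that case $\LLL^2_\nu\cap\{1\}^\perp = \{0\}$ and the statement holds trivially. I would assume, as the paper implicitly does, that $\nu$ has a positive density with $\nabla\log\nu$ locally bounded, and handle the oscillation argument locally on a ball where this density is bounded above and below.
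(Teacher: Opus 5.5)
Your proof is correct, but it gets the contradiction from a different source than the paper does.

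The paper applies the condition to $f = L\phi$ and integrates by parts. It then uses PI in the form $\|L\phi\|_\nu^2 \ge c_{\mathrm{PI}}\|\nabla\phi\|_\nu^2$ to obtain a \emph{coercivity} bound $\widetilde K \succeq \alpha c_{\mathrm{PI}}\,\mathrm{id}_G$. Here $(\widetilde K\Phi)(x) = \int \nabla_x\nabla_{x'}k(x,x')\,\Phi(x')\,\mathrm{d}\nu(x')$ acts on the space $G$ of gradient fields. The integrability hypothesis makes $\widetilde K$ Hilbert--Schmidt, hence compact, and a compact self-adjoint operator cannot be bounded below by a positive constant on an infinite-dimensional space.

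You use the Hilbert--Schmidt norm only as an operator-norm bound, giving $|\langle f, Kf\rangle_\nu| \le M\|f\|^2_{H^{-1}_\nu}$. You then place the contradiction in the unboundedness of $L$, i.e.\ in the fact that $H^{-1}_\nu$ does not control $\LLL^2_\nu$, which you check with oscillating test functions. So your route needs only boundedness, not compactness, of $\widetilde K$. The price is a concrete analytic fact about $\nu$ (absolute continuity, for the Riemann--Lebesgue step). The paper's route needs only that $G$ be infinite-dimensional, and then relies on abstract spectral theory.

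You can skip both the spectral projections in Step~2 and the density extension in Step~1. For $g\in\CCC^\infty_c$, the condition together with Step~1 gives
$\alpha\|Lg\|_\nu^2 \le M\|\nabla g\|_\nu^2 = M\langle g, Lg\rangle_\nu \le M\|g\|_\nu\|Lg\|_\nu$.
Hence $\|Lg\|_\nu \le (M/\alpha)\|g\|_\nu$, and so $\|\nabla g\|_\nu^2 \le \|g\|_\nu\|Lg\|_\nu \le (M/\alpha)\|g\|_\nu^2$. Your $g_n=\varphi\sin(nx_1)$ violate this directly, and this version does not even use PI.

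One correction: your side remark on $\nu=\delta_{x_0}$ is backwards. There $\LLL^2_\nu\cap\{1\}^\perp=\{0\}$, so the condition holds vacuously for \emph{every} $\alpha$ and the statement fails rather than holding trivially. This degenerate case has to be excluded by hypothesis, as the paper tacitly does by working with a positive density.
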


\subsubsection{Conjectured spectral characterization of the exact rate} \label{subsubsec:MFLD_L2:localL2:exact_rate}

\autoref{thm:MFLD_L2:localL2:gen_simple} only implies a lower bound on the local convergence rate of MFLD.
Given the $\LLL^2$ formulation of the dynamics (see \eqref{eq:heuri_pfids:L2:MFLD_PDE_L2} for the quadratic case or \eqref{eq:MFLD_L2:localL2:MFLD_PDE_L2} below for the general case), we intuitively expect the exact local rate to be equal to two times
\begin{equation*}
	\sigma = \min \{ \Re(\lambda); \lambda \in \spectrum(\tau L + LK) \}
\end{equation*}
where $\spectrum(\cdot)$ denotes the spectrum and
$K, L$ are the operators over $\LLL^2_\nu$ given by
$K f = \int F''[\nu](\cdot, y) \allowbreak f(y) \allowbreak \d\nu(y)$ and $L f = -\frac1\nu \nabla \cdot (\nu \nabla f)$.
However, proving this rate formally would be technically involved, because $L$ is an unbounded operator, so we leave this direction for future work.

Still at the intuitive level,
a slightly friendlier expression for our conjectured exact rate can be obtained by expressing the dynamics in terms of $\Phi_t = \nabla L^{-1} f_t$. 
Indeed denoting $\nabla^* = -\frac1\nu \nabla \cdot (\nu \bullet)$, we have $L = \nabla^* \nabla$ and $f_t = \nabla^* \Phi_t$, so%
\footnote{This intuition could also be formalized by arguing that $\nabla^*$ is ``essentially invertible'' as an operator from $\overline{\nabla \CCC^\infty_c(\RR^d)}^{\,\bm\LLL^2_\nu}$ 
to $\LLL^2_\nu \cap \{1\}^\perp$ by the PI for $\nu$,
and that $\tau \nabla \nabla^* + \nabla K \nabla^*$ is similar to $\tau L + LK = \tau \nabla^* \nabla + \nabla^* \nabla K$ via conjugation by $\nabla^*$.}
\begin{align*}
	\partial_t f_t &\approx -\tau L f_t - L K f_t \\
	\text{becomes}~~~~
	\partial_t \Phi_t &\approx -\tau \nabla \nabla^* \Phi_t - \nabla K \nabla^* \Phi_t.
\end{align*}
So the quantity $\sigma$ from the previous paragraph is also equal to the smallest eigenvalue of 
$\tau \nabla \nabla^* + \nabla K \nabla^*$, viewed as a symmetric operator over $\overline{\nabla \CCC^\infty_c}^{\LLL^2_\nu}$, the subspace of $\bm\LLL^2_\nu$ consisting of gradient fields. (Viewed as an operator over $\bm\LLL^2_\nu$, this operator would have a non-trivial kernel, although it would have the same non-zero eigenvalues.)

In other words, $\sigma$ is precisely the largest constant $\sigma'$ such that
\begin{multline*}
	\forall \Phi \in \overline{\nabla \CCC^\infty_c}^{\LLL^2_\nu},~~
	\innerprod{\Phi}{(\tau \nabla \nabla^* + \nabla K \nabla^*) \Phi}_\nu 
	= \innerprod{\Phi}{\nabla K \nabla^* \Phi}_\nu
	+ \tau \norm{\nabla^* \Phi}_\nu^2 \\
	= \iint \Phi(x)^\top \nabla_x \nabla_{x'} F''[\nu](x,x') \, \Phi(x') \, \d\nu(x) \d\nu(x')
	+ \tau \int \abs{\frac1\nu \nabla \cdot (\nu \Phi)}^2 \d\nu 
	\geq \sigma' \int \norm{\Phi}^2 \d\nu.
\end{multline*}
As we show in \autoref{sec:apx_OL}, the second term on the left-hand side is precisely equal to $\tau \Hess_\nu \KLdiv{\cdot}{\nu}(\Phi, \Phi)$, so the left-hand side is equal to $\Hess_\nu F_\tau(\Phi, \Phi)$ as defined in \autoref{subsec:heuri_pfids:heuri}.
So we can also interpret $\sigma$ as the smallest eigenvalue of the Wasserstein Hessian of $F_\tau$ at the stationary measure, in line with the intuition from \autoref{fact:findim}.

As a potential application of this conjectured rate, note that if $M_{11} = \sup_{x,x'} \norm{\nabla_x \nabla_{x'} F''[\nu]}_{\mathrm{op}} < \infty$,
then $-M_{11} \id_{\bm\LLL^2_\nu} \preceq \nabla K \nabla^* \preceq M_{11} \id_{\bm\LLL^2_\nu}$, 
and so $\sigma \leq \tau c_{\PI} + M_{11}$ by Weyl's perturbation theorem \cite[Coroll.~III.2.6]{bhatia1997matrix}.
In particular, we expect that for any $\eps>0$, there should exist an initialization $\mu_0$ such that $\forall t, \chisq{\mu_t}{\nu} \geq C e^{-2 (\tau c_{\PI} + M_{11}) (1+\eps) t}$ for some constant $C$.

\subsection{Long-time convergence rate in all metrics} \label{subsec:MFLD_L2:longtime}

The local convergence result presented in \autoref{thm:MFLD_L2:localL2:gen_simple} can be used to deduce an estimate of the long-time convergence rate of MFLD,
provided that the dynamics eventually passes through a small enough $\chi^2$-divergence neighborhood of $\nu$.
This can be guaranteed using stronger assumptions on $F$,
such as global (weak) linear convexity and the uniform LSI property introduced by \cite{chizat2022mean,nitanda2022convex}.
Moreover, $\chi^2$-divergence is a ``sensitive'' enough error metric that exponential convergence in this metric implies exponential convergence for other metrics of interest, with the same rate.
Our long-time convergence result is as follows.

\begin{corollary} \label{coroll:MFLD_L2:longtime:allmetrics}
	Under \autoref{assum:MFLD_A}, additionally suppose that
	$F+\tau_0 H$ is globally linearly convex for some $0 \leq \tau_0 < \tau$ and that
    there exists $c_{u}>0$ such that for any $\mu \in \PPP_2(\RR^d)$, the \emph{proximal Gibbs measure}
	$
		\hmu \propto \exp\left( -\frac1\tau F'[\mu](x) \right) \d x
	$
	satisfies LSI with a constant at least $c_{u}$.
	Also suppose that $F$ has Lipschitz-continuous Wasserstein gradients, i.e., that
	\begin{equation} \label{eq:MFLD_L2:longtime:lipschitz_wass_grads}
		\forall \mu, \tilde\mu \in \PPP_2(\RR^d),
		\forall x, \tx \in \RR^d,~~
		\norm{\nabla F'[\mu](x) - \nabla F'[\tilde\mu](\tx)} 
		\leq \beta \left( \norm{x-\tx} + W_2(\mu, \tilde\mu) \right)
	\end{equation}
	for some $\beta<\infty$.%
	\footnote{It is known that Lipschitz continuity of the Wasserstein gradients implies displacement smoothness \cite[App.~A]{chizat2022mean}. It is unclear at present whether the converse implication holds.}
    Moreover, suppose that
    $\inf_{q>1} \int \big| \frac{\d\mu_{t_0}}{\d\nu} \big|^q \d\nu < \infty$ for some $t_0 < \infty$.
    Then for any $\eps>0$, there exist $C, t_1<\infty$ such that for all $t \geq t_1$,
    \begin{equation*}
        \max\left\{
            W_2^2(\mu_t, \nu), 
            \KLdiv{\mu_t}{\nu},
            \chisq{\mu_t}{\nu},
            F_\tau(\mu_t) - F_\tau(\nu)
        \right\}
        \leq C e^{-2 (\tau-\tau_0) c_{\PI} (1-\eps) t}.
    \end{equation*}
\end{corollary}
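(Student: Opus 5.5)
The plan has three stages: reach a small $\chi^2$-ball around $\nu$, apply \autoref{thm:MFLD_L2:localL2:gen_simple} from there, and transfer the resulting $\chi^2$ rate to the other metrics.

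\textbf{Step 1: global convergence and entry into a $\chi^2$-ball.} Linear convexity of $F+\tau_0 H$ means $F_\tau = (F+\tau_0 H) + (\tau-\tau_0)H$ is linearly convex plus a strictly convex entropy. I would combine this with the uniform LSI on proximal Gibbs measures and run the argument of \cite{chizat2022mean,nitanda2022convex}. This gives exponential decay of $F_\tau(\mu_t)-F_\tau(\nu)$ at some possibly poor rate, together with the entropy sandwich $\KLdiv{\mu_t}{\nu} \lesssim F_\tau(\mu_t)-F_\tau(\nu)$. Here the constant $\tau_0$ is absorbed by running the argument with temperature $\tau-\tau_0$ on the convex part. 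Consequently $\KLdiv{\mu_t}{\nu}\to 0$, and by T2 (implied by the LSI of $\nu=\hat\nu$) also $W_2(\mu_t,\nu)\to0$.

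To upgrade this to $\chisq{\mu_t}{\nu}\to 0$, I would use the $\LLL^q$ hypothesis at time $t_0$. For $h(s)=s^q$, the computation in \autoref{subsec:heuri_pfids:L2} gives
\[
\frac{d}{dt}\int \rho_t^q\,\d\nu \;=\; -q(q-1)\tau\int \rho_t^{q-2}\norm{\nabla\rho_t}^2\d\nu \;-\; q(q-1)\int \rho_t^{q-1}\nabla\rho_t^\top\left(\nabla F'[\mu_t]-\nabla F'[\nu]\right)\d\nu .
\]
Bounding the drift difference by $\beta W_2(\mu_t,\nu)$ via \eqref{eq:MFLD_L2:longtime:lipschitz_wass_grads} and applying Young's inequality, the last term is absorbed into the dissipation up to a term $\lesssim \beta^2 W_2^2\int\rho_t^q\,\d\nu$. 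Since $W_2\to0$ exponentially, Grönwall yields $\sup_{t\ge t_0}\int\rho_t^q\,\d\nu<\infty$ for some $q>1$. If the chosen $q$ is below $2$, I would instead rely on hypercontractivity-type smoothing of the diffusion (under LSI of $\nu$) to raise the exponent after a finite time.

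I would then interpolate: writing $\chi^2 = \int(\rho-1)^2\,\d\nu$, Hölder between $\KLdiv{\mu_t}{\nu}\to0$ (hence $\norm{\rho_t-1}_{\LLL^1_\nu}\to0$ by Pinsker) and the uniform $\LLL^{q'}$ bound for some $q'>2$ gives $\chisq{\mu_t}{\nu}\to0$. So there is $t_1$ with $\chisq{\mu_{t_1}}{\nu}\le r$, where $r$ is the radius from \autoref{thm:MFLD_L2:localL2:gen_simple}.

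\textbf{Step 2: local rate.} The assumptions of \autoref{thm:MFLD_L2:localL2:gen_simple} are in force (finiteness of $M_{11},M_{111},M_{12}$ being part of the standing regularity, or following from displacement smoothness). Restarting the flow at $t_1$, the theorem gives $\chisq{\mu_t}{\nu}\le Ce^{-2(\tau-\tau_0)c_{\PI}(1-\eps)(t-t_1)}\,r$ for all $t\ge t_1$.

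\textbf{Step 3: other metrics.} These follow from the $\chi^2$ bound:
\begin{itemize}[label={}]
\item $\KLdiv{\mu}{\nu}\le\log(1+\chisq{\mu}{\nu})\le\chisq{\mu}{\nu}$;
\item $W_2^2\le \frac{2}{c_{\mathrm{T2}}}\,\mathrm{KL}$, by T2 from the LSI of $\nu$;
\item for the energy gap, since $F'[\nu]+\tau\log\nu$ is constant, $F_\tau(\mu)-F_\tau(\nu)=\tau\KLdiv{\mu}{\nu}+\big(F(\mu)-F(\nu)-\int F'[\nu]\,\d(\mu-\nu)\big)$, and the bracket is at most $\frac\beta2 W_2^2(\mu,\nu)$ by displacement smoothness, after interpolating along a geodesic and using first-order optimality.
\end{itemize}

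The main obstacle is Step 1, namely obtaining quantitative control of higher moments of $\rho_t$ (that is, $\chi^2$) from convergence in KL. The plan is to propagate the $\LLL^q$ bound using the exponentially decaying Wasserstein error in the drift and then interpolate, as above.
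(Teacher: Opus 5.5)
Your skeleton matches the paper's: enter a small $\chi^2$-ball, restart \autoref{thm:MFLD_L2:localL2:gen_simple} from there, and transfer the rate using $\KLdiv{\mu}{\nu}\le\chisq{\mu}{\nu}$ and T2 for $\nu=\hat\nu$. The difference is that the paper does not reprove the entry step; it takes the existence of $t_1$ with $\chisq{\mu_{t_1}}{\nu}\le\delta$ directly from \cite[Prop.~2.3]{chen2022uniform}. Your self-contained reconstruction does not go through at the sentence ``the constant $\tau_0$ is absorbed by running the argument with temperature $\tau-\tau_0$ on the convex part''. The lower sandwich does survive: $B_{F+\tau_0 H}(\mu|\nu)\ge 0$ gives $(\tau-\tau_0)\KLdiv{\mu}{\nu}\le F_\tau(\mu)-F_\tau(\nu)$. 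The Polyak--Lojasiewicz half does not survive. Running the argument of \cite{chizat2022mean,nitanda2022convex} on $(F+\tau_0H)+(\tau-\tau_0)H$ requires a uniform LSI for the measures $\propto e^{-F'[\mu]/(\tau-\tau_0)}\,\mu^{-\tau_0/(\tau-\tau_0)}$. That LSI is not assumed, and these measures need not even be normalizable. If you instead keep the given $\hat\mu\propto e^{-F'[\mu]/\tau}$, the identity $F_\tau(\mu)-F_\tau(\nu)=\tau\KLdiv{\mu}{\hat\mu}-\tau\KLdiv{\nu}{\hat\mu}-B_F(\nu|\mu)$, combined with $-B_F(\nu|\mu)\le\tau_0\KLdiv{\nu}{\mu}$, leaves a term $\tau_0\KLdiv{\nu}{\mu}-\tau\KLdiv{\nu}{\hat\mu}$. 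Nothing in your argument bounds this term by $\KLdiv{\mu}{\hat\mu}$. So for $\tau_0>0$ you have no proof that $W_2(\mu_t,\nu)\to 0$, and everything downstream in your Step 1 rests on it. Global convergence under weak linear convexity is a genuinely harder result (cf.\ \cite{chizat2025convergence}, on the torus). It has to be imported, as the paper does by citation, rather than obtained by relabelling the temperature.

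For $\tau_0=0$ the rest of Step 1 is a reasonable outline, but one step is only named. The ``hypercontractivity-type smoothing'' can be carried out by Gross's computation of $\frac{d}{dt}\log\norm{\rho_t}_{\LLL^{q(t)}_\nu}$ with increasing $q(t)$. Use the LSI of $\nu$ (constant $c_u$) for the entropy term, and absorb the drift term by Young's inequality, since $\sup\norm{\nabla F'[\mu_t]-\nabla F'[\nu]}\le\beta W_2(\mu_t,\nu)$ stays bounded. Once $\chisq{\mu_s}{\nu}<\infty$ at a single time $s$, your interpolation is unnecessary. The computation of \autoref{subsec:heuri_pfids:L2} plus Young's inequality gives $\frac{d}{dt}\chisq{\mu_t}{\nu}\le-\tau c_{\PI}\chisq{\mu_t}{\nu}+\tau^{-1}\beta^2W_2^2(\mu_t,\nu)\,(1+\chisq{\mu_t}{\nu})$, so $W_2\to0$ alone forces $\chisq{\mu_t}{\nu}\to0$.

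Finiteness of $M_{11},M_{111},M_{12}$ does not follow from displacement smoothness. A quadratic-form bound on the Wasserstein Hessian gives no sup-norm bound on $\nabla_x\nabla_y F''[\nu]$ and no control of third-order quantities. It is an implicit hypothesis, which the paper also uses when it invokes the theorem.

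Finally, your energy-gap argument via $F_\tau(\mu)-F_\tau(\nu)=\tau\KLdiv{\mu}{\nu}+B_F(\mu|\nu)$ is more direct than the paper's \autoref{lm:MFLD_L2:longtime:Ftau}. That lemma passes through $\hat\mu$ and picks up $\chi^2$ and $\beta^2/(c\tau)$ terms, whereas yours gives the cleaner bound $\tau\KLdiv{\mu}{\nu}+\beta W_2^2(\mu,\nu)$. However, the bound on $B_F(\mu|\nu)$ must come from \eqref{eq:MFLD_L2:longtime:lipschitz_wass_grads}, as in the proof of that lemma. Displacement smoothness alone controls only $F(\mu)-F(\nu)-\int\nabla F'[\nu]\cdot(T-\mathrm{id})\,\d\nu$, where $T$ is the optimal map from $\nu$ to $\mu$. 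Closing the gap to $\int F'[\nu]\,\d(\mu-\nu)$ requires spatial Lipschitz continuity of $\nabla F'[\nu]=-\tau\nabla\log\nu$.
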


\begin{proof}
    Under the conditions of the theorem, by \cite[Prop.~2.3]{chen2022uniform},
    we have that for any $\delta>0$ and any $\mu_0$ such that 
    $\inf_{q>1} \int \big| \frac{\d\mu_{t_0}}{\d\nu} \big|^q \d\nu < \infty$ for some $t_0 < \infty$, 
    there exists $t_1 < \infty$ such that $\chisq{\mu_{t_1}}{\nu} \leq \delta$.
    Combining this fact with \autoref{thm:MFLD_L2:localL2:gen_simple}, 
    we directly obtain the announced long-time convergence bound in $\chi^2$-divergence.
    
    The bounds in Wasserstein distance and in KL-divergence then follow from the fact that
    \begin{equation*}
		\forall \mu \in \PPP_2(\RR^d),~
		\frac{c_u}{2} \, W_2^2(\mu, \nu) \leq \KLdiv{\mu}{\nu} \leq \chisq{\mu}{\nu}.
	\end{equation*}
    Here the first inequality follows from the assumption that $\nu$, which is equal to its own proximal Gibbs measure by the stationarity condition, satisfies LSI with a constant at least $c_u$, and from \cite[Theorem~1]{otto2000generalization}.
    The second inequality can be proved by applying Jensen's inequality on the $\log$ inside the definition of KL-divergence and by using that $\log(1+x) \leq x$.
    
    For the long-time convergence bound in $F_\tau(\cdot) - F_\tau(\nu)$, it then suffices to apply \autoref{lm:MFLD_L2:longtime:Ftau} below. Its proof is delayed to \autoref{subsec:MFLD_L2:delayed_pfs}.
\end{proof}

\begin{lemma} \label{lm:MFLD_L2:longtime:Ftau}
	For any $F: \PPP_2(\RR^d) \to \RR$ that has $\beta$-Lipschitz-continuous Wasserstein gradients (in the sense of \eqref{eq:MFLD_L2:longtime:lipschitz_wass_grads}),
	for any stationary measure $\nu$ of $F_\tau$ that satisfies LSI with a constant $c$, we have
	\begin{equation*}
		\forall \mu,~
		F_\tau(\mu) - F_\tau(\nu)
		\leq \left( \beta + \frac{\beta^2}{c\, \tau} \right) W_2^2(\mu, \nu)
		+ \tau \KLdiv{\mu}{\nu}
		+ \frac\tau 2 \chisq{\mu}{\nu}.
	\end{equation*}
\end{lemma}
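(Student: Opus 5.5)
We split $F_\tau(\mu)-F_\tau(\nu)$ into an $F$-part and an entropy part, and bound each separately. Write $F_\tau(\mu)-F_\tau(\nu) = [F(\mu)-F(\nu) - \int F'[\nu]\,\d(\mu-\nu)] + [\int F'[\nu]\,\d(\mu-\nu) + \tau H(\mu)-\tau H(\nu)]$. By the stationarity condition $F'[\nu] = -\tau\log\nu + \cst$, the second bracket equals $\tau\int \log\frac{\mu}{\nu}\,\d\mu = \tau \KLdiv{\mu}{\nu}$. This is exactly the KL term in the statement. It therefore remains to show
\begin{equation*}
	F(\mu)-F(\nu)-\int F'[\nu]\,\d(\mu-\nu) \leq \Big(\beta+\frac{\beta^2}{c\tau}\Big) W_2^2(\mu,\nu) + \frac\tau2\chisq{\mu}{\nu}.
\end{equation*}

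For this linearization gap, interpolate along the optimal transport map. Let $T$ be the optimal map from $\nu$ to $\mu$; it exists since $\nu$ is absolutely continuous, with positive density, by stationarity. Let $\mu_s = ((1-s)\id + sT)_\sharp\nu$. Then
\begin{equation*}
	F(\mu)-F(\nu) = \int_0^1 \int \nabla F'[\mu_s](x_s)^\top (T(x)-x)\,\d\nu(x)\,\d s,
\end{equation*}
where $x_s = x+s(T(x)-x)$. Compare this with $\int F'[\nu]\,\d(\mu-\nu) = \int (F'[\nu](T(x)) - F'[\nu](x))\,\d\nu(x) = \int_0^1\int \nabla F'[\nu](x_s)^\top(T(x)-x)\,\d\nu\,\d s$. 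The difference involves $\nabla F'[\mu_s](x_s)-\nabla F'[\nu](x_s)$, which by \eqref{eq:MFLD_L2:longtime:lipschitz_wass_grads} is bounded by $\beta W_2(\mu_s,\nu)\leq \beta s W_2(\mu,\nu)$. Cauchy--Schwarz then gives a gap of at most $\frac\beta2 W_2^2(\mu,\nu)\leq \beta W_2^2(\mu,\nu)$.

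This already proves the bound without the $\beta^2/(c\tau)$ and $\chi^2$ terms, so the plan has slack. I would still double-check whether displacement-smoothness alone was intended, which would explain the extra terms. If a subtlety about the uniqueness of the additive constant, or about interchanging the derivative with the integral, forced a different route, the fallback is the following. Write $\int F'[\nu]\,\d(\mu-\nu)$ and $\int F'[\mu]\,\d(\mu-\nu)$. Bound their difference by $\int (F'[\mu]-F'[\nu])\,\d(\mu-\nu) \leq \norm{\nabla(F'[\mu]-F'[\nu])}_{\infty}\cdot W_1$-type quantities. Alternatively, use $\abs{\int g\,\d(\mu-\nu)} \leq \norm{g - \int g\,\d\nu}_{\nu}\sqrt{\chisq{\mu}{\nu}}$ together with the Poincaré inequality, which follows from LSI with constant $c$: $\norm{g-\int g\,\d\nu}_\nu^2 \leq c^{-1}\norm{\nabla g}_\nu^2 \leq c^{-1}\beta^2 W_2^2$. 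Young's inequality, $ab\leq \frac{a^2}{\tau}+\frac{\tau b^2}{4}$, then yields exactly terms of the form $\frac{\beta^2}{c\tau}W_2^2 + \frac\tau2\chi^2$. This matches the stated constants and is presumably the authors' route.

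The main obstacle is justifying the chain rule along the interpolation, i.e.\ the differentiability of $s\mapsto F(\mu_s)$ with derivative $\int \nabla F'[\mu_s]\cdot v_s\,\d\mu_s$. I would handle it by invoking the standard first-order calculus for functionals with Lipschitz Wasserstein gradients. The fallback route above avoids this step by using only linear interpolation $\nu+\theta(\mu-\nu)$ together with the mean-value formula $F(\mu)-F(\nu)=\int_0^1\int F'[\nu+\theta(\mu-\nu)]\,\d(\mu-\nu)\,\d\theta$. The definition of the first variation makes this formula nearly immediate, and it requires no transport maps.
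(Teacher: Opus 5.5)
Your main argument is correct and proves more than the lemma, by a more direct route than the paper's. Stationarity gives the exact identity $F_\tau(\mu)-F_\tau(\nu)=\tau\KLdiv{\mu}{\nu}+B_F(\mu|\nu)$, where $B_F(\mu|\nu)=F(\mu)-F(\nu)-\int F'[\nu]\,\mathrm{d}(\mu-\nu)$. Your displacement-interpolation estimate, which uses $W_2(\mu_s,\nu)=sW_2(\mu,\nu)$, then yields $F_\tau(\mu)-F_\tau(\nu)\le\tau\KLdiv{\mu}{\nu}+\frac\beta2 W_2^2(\mu,\nu)$. This implies the stated bound because the extra terms are nonnegative, and LSI is never used.

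The paper instead goes through the proximal Gibbs measure $\hat\mu\propto e^{-F'[\mu]/\tau}$. It writes $F_\tau(\mu)-F_\tau(\nu)=\tau\KLdiv{\mu}{\hat\mu}-\tau\KLdiv{\nu}{\hat\mu}-B_F(\nu|\mu)$ and drops $-\tau\KLdiv{\nu}{\hat\mu}\le 0$. It bounds $|B_F(\nu|\mu)|\le\beta W_2^2$ by the same chain-rule computation along a geodesic that you use, so your worry about justifying that step applies equally to the paper. It then splits $\tau\KLdiv{\mu}{\hat\mu}=\tau\KLdiv{\mu}{\nu}+\tau\KLdiv{\nu}{\hat\mu}-\tau\int\log\frac{\hat\mu}{\nu}\,\mathrm{d}(\mu-\nu)$. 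It bounds $\KLdiv{\nu}{\hat\mu}$ by LSI and the cross term by Cauchy--Schwarz plus the Poincar\'e inequality. That is where the $\frac{\beta^2}{c\tau}W_2^2$ and $\frac\tau2\chisq{\mu}{\nu}$ terms come from.

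Those terms are artifacts of the route. The two copies of $\tau\KLdiv{\nu}{\hat\mu}$ cancel exactly. Since $-\tau\log\frac{\hat\mu}{\nu}=F'[\mu]-F'[\nu]+\mathrm{cst}$, the cross term recombines with $-B_F(\nu|\mu)$ into exactly $B_F(\mu|\nu)$. So the paper's decomposition is your identity written in a lossy form. Your route is shorter, sharper, and free of LSI; the paper's only buys consistency with the entropy-sandwich and proximal-Gibbs machinery used elsewhere.

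Accordingly, your hedging about displacement-smoothness and your fallback paragraph are unnecessary. Your guess about the authors' route is only partly right. The Cauchy--Schwarz-plus-Poincar\'e step does appear, but it is applied to $\log\frac{\hat\mu}{\nu}$ and sits alongside an LSI bound on $\KLdiv{\nu}{\hat\mu}$, rather than a linearization of $F$ at $\nu$.
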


\subsection{Proof of \autoref{thm:MFLD_L2:localL2:gen_simple} and \autoref{thm:MFLD_L2:localL2:quad}} \label{subsec:MFLD_L2:localL2_proof}

We now provide the full proofs of our main results \autoref{thm:MFLD_L2:localL2:gen_simple} and \autoref{thm:MFLD_L2:localL2:quad}.
The proofs involve 
the elliptic operator
$L = -\frac1\nu \nabla \cdot (\nu \nabla \bullet)
= -\Delta - \nabla \log \nu \cdot \nabla$,
viewed as an unbounded operator over $\LLL^2_\nu$.
Note that $L$ is essentially self-adjoint and that
for any regular enough $f, g$, by integration by parts,
\begin{equation*}
	\innerprod{f}{Lg}_\nu = \innerprod{\nabla f}{\nabla g}_\nu
\end{equation*}
where the inner product on the right-hand side is the one of $\bm\LLL^2_\nu$.
So we may write $L = \nabla^* \nabla$, where $\nabla^* = -\frac1\nu \nabla \cdot (\nu \bullet)$ is the adjoint of $\nabla$ in the sense of unbounded operators between $\LLL^2_\nu$ and $\bm\LLL^2_\nu$.
Moreover, note that by the PI for $\nu$, any $f \in \LLL^2_\nu \cap \{1\}^\perp$ has an inverse by $L$ belonging to 
$\LLL^2_\nu$
and
\begin{equation*}
	\forall f \in \LLL^2_\nu \cap \{1\}^\perp,~~
	\norm{f}^2_{H^{-1}_\nu} \coloneqq \innerprod{f}{L^{-1} f}_\nu
	\leq c_{\PI}^{-1} \norm{f}_\nu^2.
\end{equation*}
(In other words, the weighted Sobolev space inclusions
$(H^1_\nu, \norm{\nabla \,\cdot\,}_\nu) \subset (\LLL^2_\nu, \norm{\cdot}_\nu) \subset (H^{-1}_\nu, \norm{\cdot}_{H^{-1}_\nu})$ are bounded, where 
$H^1_\nu = \left\{ f \in \LLL^2_\nu ; ~\text{$f$ admits a weak derivative $\nabla f \in \bm\LLL^2_\nu$} \right\}$
and $H^{-1}_\nu$ is its dual space w.r.t.\ the inner product of $\LLL^2_\nu$.)

It is straightforward to check that \autoref{thm:MFLD_L2:localL2:gen_simple} is implied by the following statement.
Here we use the shorthands $a \vee b = \max\{a,b\}$ and $a \wedge b = \min\{a, b\}$.

\begin{theorem} \label{thm:MFLD_L2:localL2:gen_full}
	Under \autoref{assum:MFLD_A}, denote $k(x,x') = F''[\nu](x,x')$ and 
	let $R: \PPP_2(\RR^d) \to \RR$ be the functional such that
	\begin{equation*}
		\forall \mu,~
		F(\mu) = F(\nu) + \int (-\tau \log \nu) \,\d(\mu-\nu) + \frac12 \iint k(x,x') \,\d(\mu-\nu)(x) \d(\mu-\nu)(x') + R(\mu).
	\end{equation*}
	Note that
	$R'[\nu], R''[\nu] = \cst$.
	Additionally assume that
	\begin{equation*}
		M_{11} = \! \sup_{x,y \in \RR^d} \norm{\nabla_x \nabla_y F''[\nu]}_{\mathrm{op}},
		~~
		M_{111} = \!\!\! \sup_{\substack{\tmu \in \PPP_2(\RR^d)\\x,y,z \in \RR^d}} \!\! \norm{\nabla_x \nabla_y \nabla_z R'''[\tmu]}_{\mathrm{op}},
		~~
		M_{12} = \! \sup_{\substack{\tmu \in \PPP_2(\RR^d)\\x,y \in \RR^d}} \! \norm{\nabla_x \nabla_y^2 R''[\tmu]}_{\mathrm{op}}
	\end{equation*}
	are finite.
	Then for any $0<\eps\leq \frac18$, denoting
	$M = M_{11} \vee \left( (M_{12} + M_{111}) / c_{\PI} \right)$ and
	\begin{equation} \label{eq:MFLD_L2:localL2:genthm_Wt}
		W_t = \norm{f_t}_{H^{-1}_\nu}^2 + \gamma \norm{f_t}_\nu^2
		~~\text{where}~~
		\gamma = \frac{\tau (\tau-\tau_0) c_{\PI} \, \eps^2}{128 \, M^2},
	\end{equation}
	if $W_0 \leq \olW_0 \coloneqq
	2^{-20} (\tau-\tau_0)^4 c_{\PI}^3 \, \eps^4 M^{-4} \wedge 1$
	then
	\begin{equation} \label{eq:MFLD_L2:localL2:genthm_Wtcontracts}
		\frac{d}{dt} W_t \leq -2 (\tau-\tau_0) c_{\PI} (1-\eps) W_t.
	\end{equation}
	In particular, if $\norm{f_0}_\nu^2 \leq (\gamma+1/c_{\PI})^{-1} \olW_0$
	then
	\begin{equation*}
		\norm{f_t}_\nu^2 \leq C e^{-2 (\tau-\tau_0) c_{\PI} (1-\eps) t} \norm{f_0}_\nu^2
	\end{equation*}
	where $C 
	= \gamma^{-1} (\gamma + 1/c_{\PI}) 
	= 1 + \frac{128 M^2}{\tau (\tau-\tau_0) c_{\PI}^2 \eps^2}$.
\end{theorem}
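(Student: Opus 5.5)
We will compute $\frac{d}{dt}W_t$ directly from the $\LLL^2_\nu$ form of the MFLD PDE and absorb all error terms into the two good dissipation terms. First rewrite the dynamics for $f_t$. Since $\nabla F'[\mu_t]-\nabla F'[\nu] = \nabla K f_t + \nabla R'[\mu_t]$, where $Kf=\int k(\cdot,y)f(y)\,\d\nu(y)$, we get
\begin{equation*}
\partial_t f_t = -\tau L f_t - LKf_t - \frac1\nu\nabla\cdot(\nu f_t\nabla Kf_t) - \frac1\nu\nabla\cdot\big(\nu(1+f_t)\nabla R'[\mu_t]\big).
\end{equation*}
Because $R'[\nu],R''[\nu]$ are constant, a Taylor expansion along $\nu+\theta f_t\nu$ gives $\nabla R'[\mu_t](x)=\frac12\int_0^1(\ldots)\iint\nabla_x R'''[\tmu_\theta](x,y,z)f_t(y)f_t(z)\,\d\nu\,\d\nu$. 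Integrating by parts in $y,z$ shows that this quantity is controlled by $\norm{f_t}_{H^{-1}_\nu}$ times a first-order quantity. Indeed, writing $f=\nabla^*\Phi$ with $\Phi=\nabla L^{-1}f$ and $\norm{\Phi}_\nu=\norm{f}_{H^{-1}_\nu}$, we have $\int g f\,\d\nu = \innerprod{\nabla g}{\Phi}_\nu$. This yields the pointwise bounds
\begin{equation*}
\sup\norm{\nabla Kf}\le M_{11}\norm{f}_{H^{-1}_\nu}, \qquad \sup\norm{\nabla R'[\mu_t]}\lesssim (M_{111}+M_{12})\norm{f_t}_{H^{-1}_\nu}\norm{f_t}_\nu .
\end{equation*}
The second bound uses $M_{111}$ for the $y,z$ integration by parts, and $M_{12}$ together with the PI for the mixed term. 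It is presumably the paper's \autoref{lm:MFLD_L2:localL2:estim_nablaKf_infty}.

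Next, testing against $L^{-1}f_t$ gives the $H^{-1}$ part:
\begin{equation*}
\frac{d}{dt}\norm{f_t}_{H^{-1}_\nu}^2 = -2\tau\norm{f_t}_\nu^2 - 2\innerprod{f_t}{Kf_t}_\nu + 2\innerprod{\Phi_t}{f_t\nabla Kf_t + (1+f_t)\nabla R'}_\nu .
\end{equation*}
By \autoref{assum:MFLD_A} the first two terms are at most $-2(\tau-\tau_0)\norm{f_t}_\nu^2$. Testing against $f_t$ gives the $\LLL^2$ part:
\begin{equation*}
\frac{d}{dt}\norm{f_t}_\nu^2 = -2\tau\norm{\nabla f_t}_\nu^2 - 2\innerprod{\nabla f_t}{\nabla Kf_t}_\nu - 2\innerprod{\nabla f_t}{f_t\nabla Kf_t+(1+f_t)\nabla R'}_\nu .
\end{equation*}
The linear cross term is bounded by Young's inequality: $2\innerprod{\nabla f}{\nabla Kf}_\nu\le \tau\norm{\nabla f}_\nu^2 + \tau^{-1}M_{11}^2\norm{f}_{H^{-1}_\nu}^2$. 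After multiplication by $\gamma$, the leftover $\gamma\tau^{-1}M^2\norm{f}_{H^{-1}}^2 \lesssim \gamma\tau^{-1}c_{\PI}^{-1}M^2\norm f_\nu^2$ must be absorbed by $\eps(\tau-\tau_0)\norm f_\nu^2$. This is exactly what the choice $\gamma\propto\tau(\tau-\tau_0)c_{\PI}\eps^2/M^2$ achieves. The remaining $-\tau\gamma\norm{\nabla f}^2$ dissipation is kept for absorbing the nonlinear terms.

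The main obstacle is the cubic terms. The worst one is $\innerprod{\nabla f_t}{f_t\nabla Kf_t}_\nu$. Using the sup bound on $\nabla Kf$, it is at most $M_{11}\norm{f}_{H^{-1}}\norm{\nabla f}_\nu\norm f_\nu$. Young's inequality then splits it into $\frac{\tau}{4}\norm{\nabla f}^2$ plus $M^2\tau^{-1}\norm{f}_{H^{-1}}^2\norm f^2_\nu$. The latter is at most $\tau^{-1}M^2 W_t\norm f_\nu^2/\gamma$, which is small relative to $\eps(\tau-\tau_0)\norm f_\nu^2$ when $W_t\le\olW_0$. The $H^{-1}$ cubic terms are easier: $\norm{\Phi}\,\norm f\,\sup\norm{\nabla K f}\le M\norm f_{H^{-1}}^2\norm f_\nu$, and $\norm f_\nu\le\sqrt{W_t/\gamma}$ is small. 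The $R'$ terms carry the extra factor $(1+f_t)$. The "1" part is quadratic, but it has a factor $\norm{f}_{H^{-1}}\norm f_\nu$, so it is handled the same way. The $f_t$ part is handled with $\norm f_\nu$ small; if needed, PI gives $\norm f_\nu\le c_{\PI}^{-1/2}\norm{\nabla f}_\nu$.

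Collecting terms gives $\frac{d}{dt}W_t\le -2(\tau-\tau_0)(1-\eps)\norm{f_t}_\nu^2 - 2\gamma(\tau-\tau_0)(1-\eps)c_{\PI}\norm{f_t}_\nu^2$, whenever $W_t\le\olW_0$. Since $\norm f_\nu^2\ge c_{\PI}\norm f_{H^{-1}}^2$ and $\norm{\nabla f}^2\ge c_{\PI}\norm f^2$, this is at most $-2(\tau-\tau_0)c_{\PI}(1-\eps)W_t$, which is \eqref{eq:MFLD_L2:localL2:genthm_Wtcontracts}. Because $W$ then decreases, a continuity/bootstrap argument shows $W_t\le\olW_0$ for all $t$. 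Finally, Grönwall's lemma and $\gamma\norm{f_t}^2\le W_t\le e^{-\cdots t}W_0\le e^{-\cdots t}(\gamma+c_{\PI}^{-1})\norm{f_0}^2$ give the stated bound with $C=\gamma^{-1}(\gamma+1/c_{\PI})$. Rigor for the unbounded $L$ is handled by standard regularization.
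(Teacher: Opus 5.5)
Your architecture is the paper's: the $\LLL^2_\nu$ form of the PDE, the $H^{-1}_\nu$ energy with the projector bound, the sup bound on $\nabla Kf$, the functional $W_t$ with the same $\gamma$, then bootstrap and Gr\"onwall. However, the dissipation bookkeeping in your $\LLL^2$ estimate is wrong, and your ``collecting terms'' inequality does not follow from the bounds you propose.

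Write $z,a,b$ for $\norm{f_t}_{H^{-1}_\nu}^2$, $\norm{f_t}_\nu^2$, $\norm{\nabla f_t}_\nu^2$ and $c=c_{\PI}$.

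\begin{itemize}
\item When $a\approx cz$ (i.e.\ $f_t$ near the bottom of the spectrum of $L$), the term $-2(\tau-\tau_0)a$ from $\dot z$ is entirely consumed by the $z$-part of $W_t$. It can spare only $O(\eps)(\tau-\tau_0)a$.
\item That slack covers the required $2(\tau-\tau_0)c(1-\eps)\gamma a$ only if $\gamma c\lesssim\eps$. With the prescribed $\gamma$, however, $\gamma c=\tau(\tau-\tau_0)c^2\eps^2/(128M^2)$ is unbounded as $M/(\tau c)\to0$ (e.g.\ $F$ nearly linear), and then $W_t\approx\gamma a$.
\item Hence $\gamma\dot a$ itself must retain at least $-2(\tau-\tau_0)(1-\eps)\gamma b$. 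For $\tau_0=0$ this is essentially all of $-2\tau\gamma b$.
\item Your Young step $2\innerprod{\nabla f}{\nabla Kf}_\nu\le\tau b+\tau^{-1}M_{11}^2z$ gives away half of it, and the cubic term another $\frac\tau4 b$. That leaves at most $\frac34\tau\gamma b$.
\item So whenever $\frac34\tau<2(\tau-\tau_0)(1-\eps)$, in particular for $\tau_0=0$, you get a rate of about $\frac34\tau c$ for the $\gamma a$ component, not $2(\tau-\tau_0)c(1-\eps)$.
\end{itemize}

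The missing point is that you may spend only an $O(\eps)$ fraction of $-2\tau b$, on all cross terms at once. As in the paper, bound them by $2\sqrt b\,X$ with $X=M(\sqrt z+a)(1+\sqrt a)$. Then use $2\sqrt b\,X\le2\delta\tau b+X^2/(2\delta\tau)$ with $\delta=\eps/8$. This gives $\gamma\dot a\le-2\tau(1-\delta)\gamma b+\frac{2\gamma M^2}{\delta\tau}(z+a^2)(1+a)$. With your $\gamma$, the linear leftover is exactly $\frac{2\gamma M^2}{\delta\tau}z=\frac{(\tau-\tau_0)c\,\eps}{8}z$. This is why $\gamma\propto\eps^2$: one factor of $\eps$ pays for $\delta$, the other for the absorption. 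A weight-$\tau$ Young step would only need $\gamma\propto\eps$. Then $-2(\tau-\tau_0)a-2(\tau-\tau_0)(1-\delta)\gamma b\le-2(\tau-\tau_0)c(1-\delta)W_t$, and below the stated threshold the remaining errors are $O(\eps)(\tau-\tau_0)cW_t$.

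Separately, your remainder estimate is a genuinely different and valid route.
\begin{itemize}
\item Along the flat path $\nu+\theta f_t\nu$, the second $\theta$-derivative involves only $R'''$. There is no mixed term, so $M_{12}$ plays no role.
\item Integrating by parts in $y$ and $z$ as in \autoref{lm:MFLD_L2:localL2:estim_nablaKf_infty} gives $\sup\norm{\nabla R'[\mu_t]}\le\frac12M_{111}\norm{f_t}_{H^{-1}_\nu}^2$. This implies your stated bound.
\item It avoids the Brenier maps and the inequality $W_2^2\le2c^{-1}\chisq{\mu_t}{\nu}$ used in \autoref{lm:MFLD_L2:localL2:estim_nablaR'_infty}.
\item It is strong enough that $\norm{f_t}_{H^{-1}_\nu}^2$ contracts on its own. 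So the general case could even be run exactly like the proof of \autoref{thm:MFLD_L2:localL2:quad}.
\end{itemize}
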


\begin{proof}
	Denote by $\tK: \MMM(\RR^d) \to \CCC(\RR^d)$ the operator given by $(\tK \mu)(x) = \int k(x,y) \,\d\mu(y)$, and by $K: \LLL^2_\nu \to \LLL^2_\nu$ the operator given by $(K f)(x) = \int k(x,y) f(y) \d\nu(y)$.
	Given our decomposition of $F$, the MFLD rewrites
	\begin{equation*}
		\partial_t \mu_t = \tau \nabla \cdot \left( \mu_t \nabla \log \frac{\mu_t}{\nu} \right) + \nabla \cdot \left( \mu_t \nabla \tK (\mu_t - \nu) \right)
		+ \nabla \cdot (\mu_t \nabla R'[\mu_t]).
	\end{equation*}
	In terms of $f_t = \frac{\d\mu_t}{\d\nu}-1$, this writes
	\begin{align}
		\partial_t f_t &= -\tau L f_t + \frac1\nu \nabla \cdot \left( \nu (f_t+1) \nabla K f_t \right)
		+ \frac1\nu \nabla \cdot \left( \nu (f_t+1) \nabla R'[\mu_t] \right)   \nonumber \\
		&= -\tau L f_t - L K f_t - \nabla^* (f_t \nabla K f_t)
		- \nabla^* ((f_t+1) \nabla R'[\mu_t])
	\label{eq:MFLD_L2:localL2:MFLD_PDE_L2}
	\end{align}
	where 
    $\nabla^* = -\frac1\nu \nabla \cdot (\nu \,\bullet)$
    and $L = \nabla^* \nabla$
    as defined at the beginning of this subsection.
    
	Let us estimate the time-derivative of $\norm{f_t}^2_{H^{-1}_\nu} = \innerprod{f_t}{L^{-1} f_t}_\nu$:
	\begin{multline}
		\frac{d}{dt} \innerprod{f_t}{L^{-1} f_t}_\nu
		= 2 \innerprod{f_t}{L^{-1} \partial_t f_t}_\nu   \nonumber \\
		= -2 \tau \norm{f_t}_\nu^2 - 2 \innerprod{f_t}{K f_t}_\nu
		- 2 \, \underbrace{\innerprod{f_t}{L^{-1} \nabla^* (f_t \nabla K f_t)}_\nu}_{\eqqcolon~ \mathrm{err}_1} \,
		- 2 \, \underbrace{\innerprod{f_t}{L^{-1} \nabla^* ((f_t+1) \nabla R'[\mu_t])}}_{\eqqcolon~ \mathrm{err}_2}.
	\label{eq:MFLD_L2:localL2:err1err2}
	\end{multline}
	\begin{itemize}
		\item The first two terms are upper-bounded by
	    $-2 (\tau-\tau_0) \norm{f_t}_\nu^2$, 
	    by the assumption \eqref{eq:MFLD_L2:assum_F''nu} on $k = F''[\nu]$.
		\item The third term can bounded absolutely  as follows: by Cauchy-Schwarz inequality w.r.t.\ the inner product $\innerprod{\cdot}{\cdot}_{H^{-1}_\nu} = \innerprod{\cdot}{L^{-1} \, \cdot}_\nu$,
		\begin{align*}
			\abs{\mathrm{err}_1} 
			= \abs{ \innerprod{f_t}{\nabla^* (f_t \nabla K f_t)}_{H^{-1}_\nu} }
			&\leq \norm{f_t}_{H^{-1}_\nu}
			\sqrt{\innerprod{\nabla^* (f_t \nabla K f_t)}{L^{-1}\, \nabla^* (f_t \nabla K f_t)}_\nu} \\
			&= \norm{f_t}_{H^{-1}_\nu}
			\sqrt{\innerprod{f_t \nabla K f_t}{~\underbrace{\nabla L^{-1} \nabla^*}~ (f_t \nabla K f_t)}_\nu}.
		\end{align*}
		Now $\nabla L^{-1} \nabla^*$ is precisely the orthogonal projector, in $\bm\LLL^2_\nu$, onto the subspace consisting of gradient fields, so $0 \preceq \nabla L^{-1} \nabla^* \preceq \id_{\bm\LLL^2_\nu}$. Hence
		\begin{equation*}
			\abs{\mathrm{err}_1} / \norm{f_t}_{H^{-1}_\nu}
			\leq \norm{f_t \nabla K f_t}_\nu
			= \sqrt{\int f_t^2 \norm{\nabla K f_t}^2 \d\nu}
			\leq \norm{f_t}_\nu ~ \sup_{\RR^d} \norm{\nabla K f_t}.
		\end{equation*}
		Moreover, we show in \autoref{lm:MFLD_L2:localL2:estim_nablaKf_infty} below that
		$
			\sup_{\RR^d} \norm{\nabla K f}
			\leq M_{11} \norm{f}_{H^{-1}_\nu}
		$,
		so finally
		\begin{equation*}
			\abs{\mathrm{err}_1}
			\leq \norm{f_t}_{H^{-1}_\nu} \cdot \norm{f_t}_\nu \cdot M_{11} \norm{f}_{H^{-1}_\nu}.
		\end{equation*}
		\item Similarly, we can bound the fourth term as
		\begin{equation*}
			\abs{\mathrm{err}_2} / \norm{f_t}_{H^{-1}_\nu}
			\leq \norm{(f_t+1) \nabla R'[\mu_t]}_\nu
			\leq (1 + \norm{f_t}_\nu) ~ \sup_{\RR^d} \norm{\nabla R'[\mu_t]}.
		\end{equation*}
		Furthermore, we show in \autoref{lm:MFLD_L2:localL2:estim_nablaR'_infty} below that
		\begin{equation*}
			\sup_{\RR^d} \norm{\nabla R'[\mu_t]}
			\leq \frac12 (M_{12} + M_{111}) W_2^2(\mu_t, \nu)
			\leq ~\underbrace{~
				(M_{12} + M_{111}) c_{\PI}^{-1}
				~}_{\eqqcolon \, M_R ~\text{for concision}}	~
			\norm{f_t}_\nu^2,
		\end{equation*}
		where in the second inequality we used the estimate
		$W_2^2(\mu, \nu) \leq 2 c_{\PI}^{-1} \chisq{\mu}{\nu}$
		by \cite{liu2020poincare}; so finally
		\begin{equation*}
			\abs{\mathrm{err}_2}
			\leq \norm{f_t}_{H^{-1}_\nu} \cdot (1+\norm{f_t}_\nu) \cdot
			M_R \norm{f_t}_\nu^2.
		\end{equation*}
	\end{itemize}
	In summary, we have
	\begin{align*}
		\frac{d}{dt} \norm{f_t}_{H^{-1}_\nu}^2 &\leq -2 (\tau-\tau_0) \norm{f_t}_\nu^2
		+ 2 \norm{f_t}_{H^{-1}_\nu} \left[
		\norm{f_t}_\nu \cdot M_{11} \norm{f_t}_{H^{-1}_\nu} 
		+ (1 + \norm{f_t}_\nu) \cdot M_R  \norm{f_t}_\nu^2
		\right].
	\end{align*}
	
	The inequality above is not yet enough to conclude to any local convergence, due to the term in $M_R \norm{f_t}_{H_\nu^{-1}} \norm{f_t}_\nu^3$, as it is not yet guaranteed that $\norm{f_t}_\nu$ itself remains small for all $t$ if small at initialization.
	Nonetheless, a rough bound on $\frac{d}{dt} \norm{f_t}_\nu^2$ will now be enough to conclude. 
	Namely, 
	\begin{align*}
		\frac{d}{dt} \norm{f_t}_\nu^2 
		&= 2 \innerprod{f_t}{\partial f_t}_\nu \\
		&= -2 \tau \innerprod{f_t}{L f_t}_\nu
		- 2 \innerprod{f_t}{\nabla^* \left( (f_t+1) \nabla K f_t \right)}_\nu 
		- 2 \innerprod{f_t}{\nabla^* \left( (f_t+1) \nabla R'[\mu_t] \right)} \\
		&\leq -2 \tau \norm{\nabla f_t}^2_\nu
		+ 2 \norm{\nabla f_t}_\nu 
		\cdot \left[ \sup_{\RR^d} \norm{\nabla K f_t} + \sup_{\RR^d} \norm{\nabla R'[\mu_t]} \right]
		\cdot \left( 1 + \norm{f_t}_\nu \right) \\
		&\leq -2 \tau \norm{\nabla f_t}^2_\nu
		+ 2 \norm{\nabla f_t}_\nu 
		\left( M_{11} \norm{f_t}_{H^{-1}_\nu} + M_R \norm{f_t}_\nu^2 \right) 
		\left( 1 + \norm{f_t}_\nu \right)
	\end{align*}
	where in the last line we used \autoref{lm:MFLD_L2:localL2:estim_nablaKf_infty} and \autoref{lm:MFLD_L2:localL2:estim_nablaR'_infty} again.
	
	Thus we have shown that
	\begin{align*}
		\frac{d}{dt} \norm{f_t}_{H^{-1}_\nu}^2 
		&\leq -2 (\tau-\tau_0) \norm{f_t}_\nu^2 
		+ 2 M_{11} \norm{f_t}_{H^{-1}_\nu}^2 \norm{f_t}_\nu 
		+ 2 M_R \norm{f_t}_{H^{-1}_\nu} \norm{f_t}_\nu^2 (1 + \norm{f_t}_\nu) \\
		\frac{d}{dt} \norm{f_t}^2_\nu
		&\leq -2 \tau \norm{\nabla f_t}_\nu^2
		+ 2 \norm{\nabla f_t}_\nu 
		\left( M_{11} \norm{f_t}_{H^{-1}_\nu} + M_R \norm{f_t}_\nu^2 \right) 
		\left( 1 + \norm{f_t}_\nu \right)
	\end{align*}
	and it remains to combine these bounds appropriately, knowing that $c_{\PI} \norm{\cdot}_{H^{-1}_\nu}^2 \leq \norm{\cdot}_\nu^2 \leq c_{\PI}^{-1} \norm{\nabla \,\cdot\,}_\nu^2$.
	That is, denoting
	\begin{equation*}
		z_t = \norm{f_t}_{H^{-1}_\nu}^2,
		\qquad\quad
		a_t = \norm{f_t}_\nu^2,
		\qquad\quad
		b_t = \norm{\nabla f_t}_\nu^2,
        \qquad\quad
        c = c_{\PI},
        \qquad\quad
        \ol\tau = \tau - \tau_0
	\end{equation*}
    for concision, we have
	\begin{equation} \label{eq:MFLD_L2:localL2:zab_system_gen}
		\begin{aligned}
			\dot{z}_t &\leq -2 \ol\tau a_t + 2 M_{11} z_t \sqrt{a_t} + 2 M_R \sqrt{z_t}\, a_t (1+\sqrt{a_t}) \\
			\dot{a}_t &\leq -2 \tau b_t + 2 \sqrt{b_t} \left( M_{11} \sqrt{z_t} + M_R\, a_t \right) (1+\sqrt{a_t})
		\end{aligned}
		\qquad\qquad \text{and} \qquad\qquad
		c z_t \leq a_t \leq c^{-1} b_t
	\end{equation}
	and we want to show that $a_t$ converges to $0$ exponentially.
	
	Here is one possible way to proceed. (Refinements at this step may improve the values of $\olW_0$ and of $C$ in the theorem statement, but the rate we obtain is tight.)
	For simplicity, let $M \coloneqq M_{11} \vee M_R$.
	Fix a small $\delta>0$ to be chosen later.
	Since $\forall A, B \in \RR,~ 2 A B \leq A^2 + B^2$ and $(A+B)^2 \leq 2 (A^2+B^2)$,
	\begin{align*}
        \dot{a}_t
        &\leq 
        - 2 \tau b_t 
        + 2 \sqrt{b_t} \cdot M \left( \sqrt{z_t} + a_t \right) (1+\sqrt{a_t}) \\
		&\leq - 2 \tau b_t 
		+ 2 \delta \tau \, b_t + \frac{M^2}{2\delta \tau} \left( \sqrt{z_t} + a_t \right)^2 (1+\sqrt{a_t})^2 \\
		&\leq -2 \tau (1-\delta) b_t
		+ \frac{M^2}{2\delta \tau} \cdot 2 \left( z_t + a_t^2 \right) \cdot 2 (1 + a_t) \\
		&\leq -2 \ol\tau (1-\delta) b_t
		+ \frac{2 M^2}{\delta \tau} \left( z_t + a_t^2 \right) (1+a_t).
	\end{align*}
	Now fix a small $\gamma>0$ to be chosen later. We have, denoting $W_t = z_t + \gamma a_t$,
	\begin{align*}
		&~ \dot{W}_t
		= \dot{z}_t + \gamma \dot{a}_t \\
		&\leq -2 \ol\tau a_t - 2 \ol\tau (1-\delta) \gamma b_t
		+ 2 M z_t \sqrt{a}_t
		+ 2 M \sqrt{z_t}~ a_t (1 + \sqrt{a_t})
		+ \frac{2 \gamma}{\delta \tau} M^2 \left( z_t + a_t^2 \right) (1+a_t) \\
		&\leq -2 \ol\tau c (1-\delta) (z_t + \gamma a_t)
		+ 2 M W_t \sqrt{\frac{W_t}{\gamma}} 
		+ 2 M \sqrt{W_t} \frac{W_t}{\gamma} \left( 1 + \sqrt{\frac{W_t}{\gamma}} \right)
		+ \frac{2\gamma}{\delta \tau} M^2 \left( W_t + \frac{W_t^2}{\gamma^2} \right) \left( 1 + \frac{W_t}{\gamma} \right) \\
		&= -2 \ol\tau c (1-\delta) \left[ 1 - \frac{1}{\ol\tau c (1-\delta)} \left( 
		M \sqrt{\frac{W_t}{\gamma}} 
		+ M \frac{\sqrt{W_t}}{\gamma} \left( 1 + \sqrt{\frac{W_t}{\gamma}} \right)
		+ \frac{\gamma}{\delta \tau} M^2 \left( 1 + \frac{W_t}{\gamma^2} \right) \left( 1 + \frac{W_t}{\gamma} \right)
		\right) \right] W_t.
	\end{align*}
	
	We can now conclude the proof: for any $0<\eps \leq \frac{1}{8}$, 
	by applying the above inequality with 
	$\delta = \eps/8$ and 
	$\gamma = \frac{\ol\tau c \,\cdot\, \tau \delta \,\cdot\, \eps}{16 M^2}$
	we get
	\begin{equation*}
		\dot{W}_t \leq -2 \ol\tau c (1-\eps/8) \left[ 1 - \frac{64}{63} \left( 
		\frac{M}{\ol\tau c} \cdot 3~ \sqrt{
			\frac{W_t}{\gamma} \vee \frac{W_t}{\gamma^2} \vee \frac{W_t^2}{\gamma^3}
		}
		+ \frac{\eps}{16}
		\left( 
		1 + 3
		\left( \frac{W_t}{\gamma} \vee \frac{W_t}{\gamma^2} \vee \frac{W_t^2}{\gamma^3} \right)
		\right)
		\right) \right] W_t. 
	\end{equation*}
	By assuming $W_0 \leq \olW_0$ for some $\olW_0$ such that
	$\frac{\olW_0}{\gamma} \vee \frac{\olW_0}{\gamma^2} \vee \frac{\olW_0^2}{\gamma^3}
	\leq \left(\frac{\ol\tau c \eps}{8M} \right)^2 \wedge 1$,
	we obtain that $t \mapsto W_t$ is decreasing and that
	\begin{equation*}
		\dot{W_t}
		\leq -2 \ol\tau c (1-\eps/8) \left[ 1 - \frac{64}{63} \Big( (\eps/8) \cdot 3 + (\eps/16) \cdot 4 \Big) \right] W_t
		\leq -2 \ol\tau c (1-\eps) W_t.
	\end{equation*}
	One can check that the value
	$\olW_0 = 2^{-20} \ol\tau^4 c^3 \eps^4 M^{-4} \wedge 1$
	used in the theorem statement is suitable.
	Hence the local contraction in $W_t$,
	and the local convergence in $\norm{f_t}_\nu^2$ follows by Gr\"onwall's lemma and by using that $\norm{\cdot}_{H^{-1}_\nu}^2 \leq c_{\PI}^{-1} \norm{\cdot}_\nu^2$.
\end{proof}

\begin{lemma} \label{lm:MFLD_L2:localL2:estim_nablaKf_infty}
	For any $\nu \in \PPP_2(\RR^d)$ satisfying a PI and any $k: \RR^d \times \RR^d \to \RR$,
	denoting by $K$ the operator given by $(K f)(x) = \int k(x,y) f(y) \d\nu(y)$,
	we have, for any $f \in \LLL^2_\nu$,
	\begin{equation*}
		\sup_{\RR^d} \norm{\nabla K f}
		\leq M_{11} \norm{f}_{H^{-1}_\nu}
	\end{equation*}
	where $M_{11} = \sup_{x,y} \norm{\nabla_x \nabla_y k}_{\mathrm{op}}$.
	In particular, 
	$K$ is a bounded operator over $\LLL^2_\nu$ with
	$\forall f, 
	\norm{K f}_\nu \leq c_{\PI}^{-1} M_{11} \norm{f}_\nu$.
\end{lemma}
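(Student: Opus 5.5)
Fix $x\in\RR^d$ and a unit vector $e$. The idea is to write the directional derivative $e^\top\nabla Kf(x)$ as an $\LLL^2_\nu$ pairing against $f$, and then move a gradient onto the test function via $L^{-1}$. Since $\int f\,\d\nu=0$ (implicit in $\norm{f}_{H^{-1}_\nu}$ being defined; otherwise we replace $f$ by $f-\int f\,\d\nu$, which leaves $\nabla Kf$ unchanged), we can set $g=L^{-1}f$, so that $f=Lg=\nabla^*\nabla g$. Writing $\kappa_x(y)\coloneqq e^\top\nabla_x k(x,y)$, differentiation under the integral gives
\begin{equation*}
	e^\top \nabla K f(x) = \int \kappa_x(y)\, f(y)\,\d\nu(y) = \innerprod{\kappa_x}{\nabla^*\nabla g}_\nu = \innerprod{\nabla \kappa_x}{\nabla g}_\nu,
\end{equation*}
where $\nabla\kappa_x(y)=\nabla_y\nabla_x k(x,y)^\top e$. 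Cauchy--Schwarz in $\bm\LLL^2_\nu$ then yields
\begin{equation*}
	\abs{e^\top\nabla Kf(x)}\leq \norm{\nabla\kappa_x}_\nu\,\norm{\nabla g}_\nu .
\end{equation*}
The first factor is at most $\sup_y\norm{\nabla_x\nabla_y k(x,y)}_{\mathrm{op}}\leq M_{11}$, because $\nu$ is a probability measure. The second factor satisfies $\norm{\nabla g}_\nu^2=\innerprod{g}{Lg}_\nu=\innerprod{L^{-1}f}{f}_\nu=\norm{f}_{H^{-1}_\nu}^2$. Taking the supremum over $e$ and $x$ gives the claimed bound.

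For the ``in particular'' part, $\norm{f}_{H^{-1}_\nu}\leq c_{\PI}^{-1/2}\norm{f}_\nu$. Also $Kf$ (after subtracting its $\nu$-mean, which is harmless when $K$ acts on $\LLL^2_\nu\cap\{1\}^\perp$) satisfies $\norm{Kf-\int Kf\,\d\nu}_\nu\leq c_{\PI}^{-1/2}\norm{\nabla Kf}_\nu\leq c_{\PI}^{-1/2}\sup\norm{\nabla Kf}$ by PI. Chaining these gives $c_{\PI}^{-1}M_{11}\norm{f}_\nu$.

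The main obstacle is justifying the integration by parts $\innerprod{\kappa_x}{\nabla^*\nabla g}_\nu=\innerprod{\nabla\kappa_x}{\nabla g}_\nu$. Here $\kappa_x$ need not be in $\LLL^2_\nu$ a priori, since it may grow linearly in $y$, and $g$ lies only in the form domain $H^1_\nu$. My plan is to use that $\nabla\kappa_x$ is bounded, so $\kappa_x$ is Lipschitz and hence in $\LLL^2_\nu$ ($\nu$ has finite second moments). Then I would approximate $\kappa_x$ by $\CCC^\infty_c$ cutoffs $\kappa_x\chi_R$, for which the identity holds by the weak definition of $L$ on its form domain, and pass to the limit $R\to\infty$ by dominated convergence. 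The cross term $\kappa_x\nabla\chi_R$ vanishes in $\bm\LLL^2_\nu$ because $\abs{\kappa_x}\lesssim 1+\norm{y}$ and $\norm{\nabla\chi_R}\lesssim 1/R$ on $\{R\leq\norm{y}\leq 2R\}$. Differentiating $Kf$ under the integral sign is justified the same way, using linear growth of $\nabla_x k(x,\cdot)$ and $f\in\LLL^2_\nu$.
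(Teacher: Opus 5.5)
Your argument is correct and is the paper's proof written out in more detail: the paper bounds $\int f h\,\d\nu\le\norm{f}_{H^{-1}_\nu}\norm{\nabla h}_\nu$ by $H^{-1}_\nu$--$H^1_\nu$ duality, which is exactly your identity $\innerprod{\kappa_x}{f}_\nu=\innerprod{\nabla\kappa_x}{\nabla L^{-1}f}_\nu$ followed by Cauchy--Schwarz. For the second part the paper then applies PI to $Kf$, and your centering of $Kf$ there is actually necessary, since $\int Kf\,\d\nu$ need not vanish; the paper's proof leaves this implicit. One slip, in an aside: replacing $f$ by $f-\int f\,\d\nu$ does not leave $\nabla Kf$ unchanged, because $\nabla K1=\nabla_x\int k(x,y)\,\d\nu(y)$ is generally nonzero (e.g.\ it equals $2(x-\int y\,\d\nu(y))$ for $k(x,y)=\norm{x-y}^2$). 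So the first bound genuinely requires $\int f\,\d\nu=0$, as is implicit in $\norm{f}_{H^{-1}_\nu}$ being defined.
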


\begin{proof}	
	For any $x \in \RR^d$ and $w \in \RR^d$ such that $\norm{w}=1$, denoting $h(y) = w^\top \nabla_x k(x, y)$,
	\begin{align*}
		w^\top \nabla (K f)(x)
		&= w^\top \int \nabla_x k(x, y) f(y) \d\nu(y)
		= \int fh ~\d\nu
		\leq \norm{f}_{H^{-1}_\nu} \norm{h}_{H^1_\nu} \\
		&\leq \norm{f}_{H^{-1}_\nu} ~
		\sqrt{\int \d\nu(y) \norm{\nabla_y \left(w^\top \nabla_x k(x,y) \right)}^2} \\
		&\leq \norm{f}_{H^{-1}_\nu} ~
		\sup_{y \in \RR^d} \norm{\nabla_x \nabla_y k}_{\mathrm{op}}
	\end{align*}
	where $\norm{h}_{H^1_\nu}^2 \coloneqq \innerprod{h}{L h}_\nu = \innerprod{\nabla h}{\nabla h}_\nu$.
	The announced inequality follows by taking the supremum w.r.t.\ $x$ and $w$.
	The second part of the lemma follows by noting that
	\begin{align*}
		\norm{K f}_\nu^2
		\leq c_{\PI}^{-1} \norm{\nabla K f}_\nu^2
		\leq c_{\PI}^{-1} M_{11}^2 	\norm{f}_{H^{-1}_\nu}^2
		\leq c_{\PI}^{-2} M_{11}^2 \norm{f}_\nu^2
	\end{align*}
	by the PI for $\nu$.
\end{proof}

\begin{lemma} \label{lm:MFLD_L2:localL2:estim_nablaR'_infty}
	Let $\nu \in \PPP_2^{\AC}(\RR^d)$ and $R: \PPP_2(\RR^d) \to \RR$ such that $R'[\nu], R''[\nu] = \cst$.
	Then for any $\mu \in \PPP_2(\RR^d)$,
	\begin{equation*}
		\sup_{\RR^d} \norm{\nabla R'[\mu]}
		\leq \frac12 (M_{12} + M_{111}) W_2^2(\mu, \nu)
	\end{equation*}
	where 
	$M_{12} = \sup_{x,y,\tmu} \norm{\nabla_x \nabla_y^2 R''[\tmu]}_{\mathrm{op}}$ and
	$M_{111} = \sup_{x,y,z,\tmu} \norm{\nabla_x \nabla_y \nabla_z R'''[\tmu]}_{\mathrm{op}}$.
\end{lemma}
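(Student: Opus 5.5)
We interpolate along a Wasserstein geodesic and Taylor-expand $\nabla R'$ to second order in the displacement. Fix $\mu \in \PPP_2(\RR^d)$ and let $T$ be the optimal transport map from $\nu$ to $\mu$, which exists since $\nu \in \PPP_2^{\AC}(\RR^d)$. Set $\mu_s = ((1-s)\id + sT)_\sharp \nu$ for $s \in [0,1]$ and $\Phi = T - \id$, so that $\int \norm{\Phi}^2 \d\nu = W_2^2(\mu,\nu)$. Fix $x \in \RR^d$ and a unit vector $w$, and study the scalar function
\begin{equation*}
	g(s) = w^\top \nabla R'[\mu_s](x).
\end{equation*}
Since $R'[\nu]$ is constant, $g(0) = 0$. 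It then suffices to show $g'(0) = 0$ and $\sup_{s} |g''(s)| \leq (M_{12}+M_{111}) \int \norm{\Phi}^2 \d\nu$. Taylor's formula gives $|g(1)| \leq \frac12 \sup_s |g''(s)|$, and taking the supremum over $x$ and $w$ yields the claim.

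\textbf{First derivative.} By the chain rule for first variations along the pushforward curve,
\begin{equation*}
	g'(s) = \int w^\top \nabla_x \nabla_y R''[\mu_s](x, y_s)\, \Phi(y) \,\d\nu(y),
	\qquad y_s = y + s\Phi(y).
\end{equation*}
This holds because $\frac{d}{ds}\int \varphi \,\d\mu_s = \int \nabla\varphi(y_s)^\top \Phi(y)\,\d\nu(y)$, applied to $\varphi = w^\top\nabla_x R''[\mu_s](x,\cdot)$ and combined with differentiation in the measure argument. At $s=0$ we have $R''[\nu] = \cst$, so $\nabla_y R''[\nu] = 0$ and hence $g'(0) = 0$.

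\textbf{Second derivative.} Differentiating once more produces two terms. The first comes from moving the point $y_s$:
\begin{equation*}
	\int w^\top \nabla_x \nabla_y^2 R''[\mu_s](x,y_s)[\Phi(y),\Phi(y)] \,\d\nu(y),
\end{equation*}
which is bounded by $M_{12}\int \norm{\Phi}^2 \d\nu$. The second comes from moving the measure $\mu_s$:
\begin{equation*}
	\iint w^\top \nabla_x \nabla_y \nabla_z R'''[\mu_s](x,y_s,z_s)[\Phi(y),\Phi(z)] \,\d\nu(y)\d\nu(z).
\end{equation*}
This is bounded by $M_{111}\big(\int \norm{\Phi}\d\nu\big)^2 \leq M_{111}\int\norm{\Phi}^2\d\nu$ by Jensen's inequality. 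Combining the two bounds gives the required estimate on $\sup_s |g''(s)|$.

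\textbf{Main obstacle.} The delicate step is justifying the differentiation formulas: interchanging derivatives with the integrals, and the identity $\frac{d}{ds} R''[\mu_s](x,y) = \int \nabla_z R'''[\mu_s](x,y,z_s)^\top \Phi(z)\,\d\nu(z)$. I would handle this at the level of regularity the paper assumes throughout (smooth variations with bounded derivatives). The clean way is to first write the integrated form
\begin{equation*}
	R''[\mu_{s+h}] - R''[\mu_s] = \int_0^1 \int R'''[\mu_{s+\theta h}](\cdot,\cdot,z)\,\d(\mu_{s+h}-\mu_s)(z)\,\d\theta,
\end{equation*}
and then pass to the limit $h \to 0$ using the uniform bounds $M_{12}$ and $M_{111}$ together with dominated convergence, since $\Phi \in \bm\LLL^2_\nu$.
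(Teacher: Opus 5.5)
Your proposal is correct and follows the paper's proof essentially step for step. Both arguments use the Brenier map and displacement interpolation from $\nu$ to $\mu$, then a second-order Taylor expansion of $s \mapsto \nabla R'[\mu_s](x)$. The zeroth- and first-order terms vanish because $R'[\nu]$ and $R''[\nu]$ are constant. The two second-order terms are bounded by $M_{12} W_2^2(\mu,\nu)$ and, via Jensen, by $M_{111} W_2^2(\mu,\nu)$. Your Lagrangian parametrization $y_s = y + s\Phi(y)$ integrated against $\nu$ is the paper's Eulerian field $\Psi^s$ integrated against $\mu^s$, written differently.

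One small slip is in your justification remark. The exact integrated identity should evaluate $R'''$ at the linear interpolation $\mu_s + \theta(\mu_{s+h}-\mu_s)$, not at $\mu_{s+\theta h}$. At the regularity assumed, this changes nothing in the limit $h \to 0$ that yields your derivative formula.
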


\begin{proof}
	By Brenier's theorem, there exists an optimal transport map $T$ such that $\mu = T_\sharp \nu$.
	Let $\Psi = T - \id$ and $\mu^s = (\id + s \Psi)_\sharp \nu$ for all $0 \leq s \leq 1$, so that $\mu^0 = \nu$ and $\mu^1 = \mu$.
	Also denote $\Psi^s = \Psi \circ (\id + s \Psi)^{-1}$ for all $0 \leq s < 1$, so that $\partial_s \mu^s = -\nabla \cdot (\mu^s \Psi^s)$,
	and note that $\int \norm{\Psi^s}^2 \d\mu^s = W_2^2(\mu, \nu)$ for all $s$.
	
	Fix $x \in \RR^d$.
	By explicit computations, one has that
	\begin{align*}
		\frac{d}{ds} \nabla R'[\mu^s](x) &= \nabla_x \int \d\mu^s(y)~ \Psi^s(y)^\top \nabla_y R''[\mu^s](x,y) \\
		\frac{d^2}{ds^2} \nabla R'[\mu^s](x) &= \nabla_x \int d\mu^s(y)~ \Psi^s(y)^\top \nabla_y^2 R''[\mu^s](x,y) \, \Psi^s(y) \\
		&~~~~ + \nabla_x \iint \d\mu^s(y) \d\mu^s(y')~ \Psi^s(y)^\top \nabla_y \nabla_{y'} R'''[\mu^s](x,y,y') \, \Psi^s(y').
	\end{align*}
	So by a second-order Taylor expansion of $s \mapsto \nabla R'[\mu^s](x)$,
	since $R'[\nu]=\cst$ and $R''[\nu]=\cst$,
	\begingroup \allowdisplaybreaks
	\begin{align*}
		\nabla R'[\mu](x) &= 0 + 0 + \int_0^1 \d s ~(1-s)
		\Bigg(
			\nabla_x \int d\mu^s(y)~ \Psi^s(y)^\top \nabla_y^2 R''[\mu^s](x,y) \, \Psi^s(y) \\*
		&\qquad\qquad\qquad + 
			\nabla_x \iint \d\mu^s(y) \d\mu^s(y')~ \Psi^s(y)^\top \nabla_y \nabla_{y'} R'''[\mu^s](x,y,y') \, \Psi^s(y')
		\Bigg) \\
		\norm{\nabla R'[\mu](x)}
		&\leq \frac12 \sup_{s \in [0,1)}
		\Bigg( 
			\int \d\mu^s(y) ~\norm{\nabla_x \nabla_y^2 R''[\mu^s](x,y)}_{\mathrm{op}} \norm{\Psi^s(y)}^2 \\*
		&\qquad\qquad~~ 
			+ \iint \d\mu^s(y) \d\mu^s(y')~ \norm{\Psi^s(y)} \norm{\Psi^s(y')} ~\norm{\nabla_x \nabla_y \nabla_{y'} R'''[\mu^s](x,y,y')}_{\mathrm{op}}
		\Bigg) \\*
		&\leq \frac12 M_{12} W_2^2(\mu, \nu)
		+ \frac12 M_{111} \left( \sup_s \int \d\mu^s \norm{\Psi^s} \right)^2 
		\leq \frac12 (M_{12} + M_{111}) W_2^2(\mu, \nu),
	\end{align*}
	\endgroup
	as announced,
	where in the last inequality we used Jensen's inequality.
\end{proof}

In the case of a quadratic functional $F$, the proof can be simplified, leading to \autoref{thm:MFLD_L2:localL2:quad}, as we now present.

\begin{proof}[Proof of \autoref{thm:MFLD_L2:localL2:quad}]
	The beginning of the proof is identical to the one for the general case, specialized to $R(\mu)=0$. 
	%
	One can check that we obtain
	\begin{align*}
		\frac{d}{dt} \norm{f_t}_{H^{-1}_\nu}^2 
		&\leq -2 (\tau-\tau_0) \norm{f_t}_\nu^2 
		+ 2 M_{11} \norm{f_t}_{H^{-1}_\nu}^2 \norm{f_t}_\nu \\
		\frac{d}{dt} \norm{f_t}^2_\nu
		&\leq -2 \tau \norm{\nabla f_t}_\nu^2
		+ 2 M_{11} \norm{\nabla f_t}_\nu \norm{f_t}_{H^{-1}_\nu} (1 + \norm{f_t}_\nu)
	\end{align*}
	and it remains to combine these bounds appropriately.
	Denoting
	\begin{equation*}
		z_t = \norm{f_t}_{H^{-1}_\nu}^2,
		\qquad
		a_t = \norm{f_t}_\nu^2,
		\qquad
		b_t = \norm{\nabla f_t}_\nu^2,
		\qquad
		c = c_{\PI},
        \qquad
        \ol\tau = \tau-\tau_0,
		\qquad
		M = M_{11}
	\end{equation*}
	for concision, we have
	\begin{equation} \label{eq:MFLD_L2:localL2:zab_system_quad}
		\begin{aligned}
			\dot{z}_t &\leq -2 \ol\tau a_t + 2 M z_t \sqrt{a_t} \\
			\dot{a}_t &\leq -2 \tau b_t + 2 M \sqrt{b_t} \sqrt{z_t} (1+\sqrt{a_t})
		\end{aligned}
		\qquad\qquad \text{and} \qquad\qquad
		c z_t \leq a_t \leq c^{-1} b_t.
	\end{equation}
	
	Contrary to the proof for the general case, here we can directly establish local contraction of $z_t$. Indeed, since $z_t \leq \sqrt{z_t} \cdot \sqrt{c^{-1} a_t}$,
	\begin{align*}
		\dot{z}_t &\leq -2 \ol\tau a_t + 2 M \sqrt{z_t} \cdot c^{-1/2} a_t \\
		&= -2 \ol\tau a_t (1 - M \ol\tau^{-1} c^{-1/2} \sqrt{z_t}).
	\end{align*}
	Assuming $z_0 \leq \frac{\ol\tau^2 c (\eps/2)^2}{M^2}$, which we note can be ensured by assuming $a_0 \leq \frac{\ol\tau^2 c^2 (\eps/2)^2}{M^2}$, we obtain that $t \mapsto z_t$ is decreasing and that
	\begin{align}
		\dot{z}_t &\leq -2 \ol\tau a_t (1 - \eps/2)
		\leq -2 \ol\tau c (1-\eps/2) z_t   \nonumber \\
		\text{and so}~~
		z_t &\leq e^{-2 \ol\tau c (1-\eps/2) t} z_0
	\label{eq:MFLD_L2:localL2:zt_contracts}
	\end{align}
	by Gr\"onwall's lemma.
	In turn, we can use this control on $z_t$ to show the convergence of $a_t$ as follows. We have, since $\sqrt{b_t} \sqrt{z_t} \sqrt{a_t} \leq c^{-1/2} b_t \sqrt{z_t} \leq c^{-1/2} b_t \sqrt{z_0}$ since $t \mapsto z_t$ is decreasing, and since $\forall A,B \in \RR,~ 2AB \leq A^2+B^2$,
	\begin{align*}
		\dot{a}_t &\leq -2 \tau b_t + 2 M \sqrt{b_t} \sqrt{z_t} \sqrt{a_t} + 2 M \sqrt{b_t} \sqrt{z_t} \\
		&\leq -2 \tau b_t + 2 M c^{-1/2} \sqrt{z_0}\, b_t + \eps \tau\, b_t + \frac{1}{\eps \tau} M^2 z_t \\
		&= -2 \tau \left( 1 - M \tau^{-1} c^{-1/2} \sqrt{z_0} - \eps/2 \right) b_t + \frac{1}{\eps \tau} M^2 z_t.
	\end{align*}
	Since we assume
    $z_0 \leq \frac{\ol\tau^2 c (\eps/2)^2}{M^2}
    \leq \frac{\tau^2 c (\eps/2)^2}{M^2}$,
    then the first term is non-negative and bounded by $-2 \tau (1-\eps) b_t \leq -2 \tau c (1-\eps) a_t$. Then, using \eqref{eq:MFLD_L2:localL2:zt_contracts} to bound the second term, we get
	\begin{align*}
	\label{eq:MFLD_L2:localL2:defective_gronwall}
		\dot{a}_t &\leq -2 \tau c (1-\eps) a_t + \frac{1}{\eps \tau} M^2 z_0 \, e^{-2 \ol\tau c (1-\eps/2) t} \\
		&\leq -2 \ol\tau c (1-\eps) a_t + \frac{1}{\eps \tau} M^2 z_0 \, e^{-2 \ol\tau c (1-\eps/2) t} \\
		\frac{d}{dt} \left[ e^{2 \ol\tau c (1-\eps) t} a_t \right]
		&= e^{2 \ol\tau c (1-\eps) t} \left( \dot{a}_t + 2 \ol\tau c (1-\eps) a_t \right)
		\leq \frac{1}{\eps \tau} M^2 z_0 \, e^{-\ol\tau c \eps t} \\
		e^{2 \ol\tau c (1-\eps) t} a_t 
		&\leq a_0 + \frac{1}{\eps \tau} M^2 z_0
		\underbrace{~\int_0^t e^{-\ol\tau c \eps s} \d s~}_{\frac{1}{\ol\tau c \eps} (1 - e^{-\ol\tau c \eps t})} \\
		&\leq a_0 + \frac{1}{\eps \tau} M^2 z_0 \cdot \frac{1}{\ol\tau c \eps}
		\leq a_0 + \frac{M^2}{\tau \ol\tau c \eps^2} z_0 
		\leq \left( 1 + \frac{M^2}{\tau \ol\tau c^2 \eps^2} \right) a_0,
	\end{align*}
	as announced.
\end{proof}

\subsection{Delayed proofs of complementary Propositions} \label{subsec:MFLD_L2:delayed_pfs}

\begin{proof}[Proof of \autoref{prop:MFLD_L2:localL2:tau0=tau_unstable}]
	By finiteness of $M_{11} = \sup_{x,x'} \norm{\nabla_x \nabla_{x'} F''[\nu]}_{\mathrm{op}}$, the mapping
	$g \mapsto \iint F''[\nu](x,x') \allowbreak \, g(x) g(x') \allowbreak \,\d\nu(x) \d\nu(x') + \tau \int \abs{g}^2 \d\nu$ is continuous w.r.t.\ the Hilbertian norm on $\LLL^2_\nu$. 
	Indeed, 
	denoting by $K$ the operator such that $(Kg)(x) = \int F''[\nu](x,x') g(x') \d\nu(x')$,
	we have for any $g, h \in \LLL^2_\nu$
	\begin{align*}
		& \abs{\iint F''[\nu](x,x') \, g(x) g(x') \,\d\nu(x) \d\nu(x') 
			- \iint F''[\nu](x,x') \, h(x) h(x') \,\d\nu(x) \d\nu(x')} \\
		&= \abs{\innerprod{g}{Kg}_\nu - \innerprod{h}{Kh}_\nu}
		\leq \abs{\innerprod{g}{K(g-h)}_\nu} + \abs{\innerprod{h}{K(h-g)}_\nu}
		\leq (\norm{g}_\nu + \norm{h}_\nu) 
		\underbrace{ \norm{K(g-h)}_\nu }_{\leq c_{\PI}^{-1} M_{11} \norm{g-h}_\nu}
	\end{align*}
	by \autoref{lm:MFLD_L2:localL2:estim_nablaKf_infty}.
	Moreover, the set $\CCC^\infty_c$ of compactly supported and infinitely smooth functions on $\RR^d$ is dense in $\LLL^2_\nu$. So without loss of generality, we may assume that
	the function $f$ satisfying
	$\iint F''[\nu](x,x') \, f(x) f(x') \,\d\nu(x) \d\nu(x') + \tau \int \abs{f}^2 \d \nu < 0$
	belongs to $\CCC^\infty_c$.
	
	Let $\mu^\delta = (1 + \delta f) \nu \in \PPP(\RR^d)$ for any $\abs{\delta} < \norm{f}_\infty^{-1}$.
	Note that
	\begin{equation*}
		\int \norm{x}^2 \d\mu^\delta(x)
		= \int \norm{x}^2 \d\nu(x)
		+ \delta \int \norm{x}^2 f(x) \,\d\nu(x)
		\leq (1 + \abs{\delta} \, \norm{f}_\infty) \int \norm{x}^2 \d\nu(x) < \infty,
	\end{equation*}
	so $\mu^\delta \in \PPP_2(\RR^d)$.
	Further denote $g(\delta) = F_\tau(\mu^\delta)$, 
	then
	\begin{align*}
		g'(\delta) &= \int F'_\tau[\mu^\delta](x) \, f(x) \,\d\nu(x)
		= \int F'[\mu^\delta](x) \, f(x) \,\d\nu(x)
		+ \tau \int \log\left[ \left( 1 + \delta f(x) \right) \nu(x) \right] f(x) \,\d\nu(x) \\
		g''(\delta) &= \iint F''[\mu^\delta](x,x') \, f(x) f(x') \, \d\nu(x) \d\nu(x')
		+ \int \frac{\abs{f(x)}^2}{1 + \delta f(x)} \d\nu(x).
	\end{align*}
	In particular $g(0) = F_\tau(\nu)$, $g'(0) = 0$ by stationarity of $\nu$, and $g''(0) < 0$ by assumption.
	
	Furthermore, we claim that $g$ is $\CCC^2$ in a neighborhood of zero. 
	Indeed, the second term in the expression of $g''(\delta)$ is continuous on $(-\norm{f}_\infty^{-1}, \norm{f}_\infty^{-1})$ since $f \in \CCC^\infty_c$.
	For the first term, denoting $L = -\frac1\nu \nabla \cdot (\nu \nabla \bullet)$ and $\phi = L^{-1} f$ since $L$ is invertible on $\LLL^2_\nu \cap \{1\}^\perp$ by the PI assumption, we have
	\begin{align*}
		G_1(\delta) 
		&\coloneqq \iint F''[\mu^\delta](x,x') \, f(x) f(x') \, \d\nu(x) \d\nu(x') \\
		&= \iint \nabla \phi(x)^\top \nabla_x \nabla_{x'} F''[\mu^\delta](x,x') \, \nabla \phi(x') \, \d\nu(x) \d\nu(x') 
	\end{align*}
	and so
	\begin{align*}
		\abs{G_1(\delta) - G_1(0)}
		&= \abs{\iint \nabla \phi(x)^\top
			\Big( \nabla_x \nabla_{x'} F''[\mu^\delta](x,x') - \nabla_x \nabla_{x'} F''[\nu](x,x') \Big)
			\, \nabla \phi(x') \, \d\nu(x) \d\nu(x')} \\
		&\leq \sup_{x,x' \in \RR^d}
		\norm{\nabla_x \nabla_{x'} F''[\mu^\delta] - \nabla_x \nabla_{x'} F''[\nu]}_{\mathrm{op}}
		\left( \int \norm{\nabla \phi} \d\nu \right)^2.
	\end{align*}
	Now by Jensen's inequality and by PI, the second factor is bounded by
	$\left( \int \norm{\nabla \phi} \d\nu \right)^2
	\leq \norm{\nabla \phi}_\nu^2 = \innerprod{\phi}{L \phi}_\nu = \innerprod{L^{-1} f}{f}_\nu \leq c_{PI}^{-1} \norm{f}_\nu^2 < \infty$, and by finiteness of 
	$M_{111} = \sup_{\substack{\tmu \in \PPP_2(\RR^d)\\x,y,z \in \RR^d}} \norm{\nabla_x \nabla_y \nabla_z F'''[\tmu]}_{\mathrm{op}}$, the first factor is bounded by
	\begin{equation*}
		\sup_{x,x'}
		\norm{\nabla_x \nabla_{x'} F''[\mu^\delta] - \nabla_x \nabla_{x'} F''[\nu]}_{\mathrm{op}}
		\leq M_{111} W_1(\mu^\delta, \nu)
		\leq M_{111} W_2(\mu^\delta, \nu) 
		\leq M_{111} \sqrt{ 2 c_{\PI}^{-1} \chisq{\mu^\delta}{\nu} }
	\end{equation*}
	where the first inequality follows from 
	\cite[Lemma~D.8]{wang2024mean}
	and in the last inequality we used the estimate proved in \cite{liu2020poincare}.
	Finally, note that by definition of $\mu^\delta$,
	\begin{equation*}
		\chisq{\mu^\delta}{\nu} = \delta^2 \norm{f}_\nu^2 \to 0 ~~\text{as}~~ \delta \to 0,
	\end{equation*}
	which proves our claim.
	
	Since $g'(0) = 0$, $g''(0)<0$ and $g$ is $\CCC^2$ in a neighborhood of zero, then for any small enough non-zero $\delta$, we have by a Taylor's expansion that
	$g(\delta) < g(0)$, i.e., $F_\tau(\mu^\delta) < F_\tau(\nu)$.
	So along the MFLD $(\mu_t)_t$ initialized at $\mu_0 = \mu^\delta$, since $t \mapsto F_\tau(\mu_t)$ is non-increasing, then
	$\lim\inf_t F_\tau(\mu_t) \leq F_\tau(\mu^\delta) < F_\tau(\nu)$.
	On the other hand, $F_\tau$ is lower semi-continuous w.r.t.\ convergence in Wasserstein distance, i.e., $\lim\inf_n F_\tau(\mu_n) \geq F_\tau(\mu_\infty)$ for any sequence such that $(\mu_n)_n$ that converges narrowly to $\mu_\infty$ and $\int \norm{x}^2 \d\mu_n \to \int \norm{x}^2 \d\mu_\infty$
	\cite[Remark~7.1.11, Lemma~9.4.3]{ambrosio2008gradient}.
	So MFLD initialized at $\mu^\delta$ cannot converge to $\nu$ in Wasserstein distance.
	To finish the proof of the proposition, it suffices to check that $\chisq{\mu^\delta}{\nu}$ can be made arbitrarily small by choosing a small $\delta$, which is indeed the case since $\chisq{\mu^\delta}{\nu} \to 0$ as $\delta \to 0$ as we showed above.
\end{proof}

\begin{proof}[Proof of \autoref{prop:MFLD_L2:localL2:tau0>=0}]
	Assume by contradiction that the condition holds for some $\alpha >0$.
	Let $L = -\frac1\nu \nabla \cdot (\nu \nabla \bullet)$ and denote by $G = \overline{\nabla \CCC^\infty_c}^{\bm\LLL^2_\nu}$ the subspace of $\bm\LLL^2_\nu$ consisting of gradient fields. For any $\phi \in \CCC^\infty_c$, applying the condition to $f = L \phi$ and performing integrations by parts on the left-hand side yields
	\begin{equation*}
		\forall \phi \in \CCC^\infty_c,~
		\iint \nabla \phi(x)^\top \nabla_x \nabla_{x'} k(x,x') \, \nabla \phi(x') \,\d\nu(x) \d\nu(x')
		\geq \alpha \norm{L \phi}_\nu^2
		\geq \alpha \, c_{\PI} \int \norm{\nabla \phi}^2 \d\nu
	\end{equation*}
	where for the second inequality we used the PI for $\nu$ and the assumption that $\alpha>0$.
	Thus, we have
	\begin{equation*}
		\forall \Phi \in G,~
		\iint \Phi(x)^\top \nabla_x \nabla_{x'} k(x,x') ~\Phi(x') \,\d\nu(x) \d\nu(x')
		\geq \alpha \, c_{\PI} \int \norm{\Phi}^2 \d\nu.
	\end{equation*}
	Equivalently, $\tK \succeq \alpha \, c_{\PI} \id_G$ where $(\tK \Phi)(x) = \int \nabla_x \nabla_{x'} k(x,x') \, \Phi(x') \d\nu(x')$.
	Now the integrability assumption on $k$ is equivalent to the integral operator $\tK$ being Hilbert-Schmidt, which implies it is compact as an operator over $\bm\LLL^2_\nu$, and so also as an operator over $G$ since $\tK$ takes values in $G$ and vanishes over $G^\perp$. But by the spectral theorem, the eigenvalues of compact operators accumulate at $0$. So by choosing a sequence $(\lambda_n)_n$ of eigenvalues of $\tK$ which converges to $0$, with associated eigenfunctions $\Phi_n$, and by evaluating the above inequality at $\Phi = \Phi_n$, we obtain that $\forall n, \lambda_n \geq \alpha \, c_{\PI}$. Taking a limit $n \to \infty$ shows that $0 \geq \alpha \,c_{\PI}$, contradicting our assumption that $\alpha>0$.
\end{proof}

\begin{proof}[Proof of \autoref{lm:MFLD_L2:longtime:Ftau}]
	Let $\mu \in \PPP_2(\RR^d)$ and $\hmu \propto e^{-\frac1\tau F'[\mu](x)} \d x$.
	We have
	\begin{equation} \label{eq:MFLD_L2:longtime:upper_entropy_sandwich_idty}
		F_\tau(\mu) - F_\tau(\nu)
		= \tau \KLdiv{\mu}{\hat\mu} - \tau \KLdiv{\nu}{\hat\mu} - B_F(\nu | \mu)
	\end{equation}
	where $B_F(\mu_1 | \mu_0) \coloneqq F(\mu_1) - F(\mu_0) - \int F'[\mu_0] \,\d(\mu_1-\mu_0)$ denotes the Bregman divergence of $F$.
	This can be seen by inspecting the proof of the upper entropy sandwich \cite[Lemma~3.4]{chizat2022mean}, or it can be verified by explicitly computing, starting from the right-hand side:
	\begin{align*}
		\tau \KLdiv{\mu}{\hat\mu} - \tau \KLdiv{\nu}{\hat\mu} - B_F(\nu | \mu)
		&= \tau H(\mu) - \tau \int \d\mu \log \hat\mu - \tau H(\nu) + \tau \int \d\nu \log \hat\mu \\
		&~~~~ - F(\nu) + F(\mu) + \int F'[\mu] \,\d(\nu - \mu) \\
		&= F_\tau(\mu) - F_\tau(\nu)
		+ \tau \int \d(\nu - \mu) \log \hat\mu + \int F'[\mu] \,\d(\nu - \mu) \\
		&= F_\tau(\mu) - F_\tau(\nu).
	\end{align*}
	Now let us bound each term on the right-hand side of \eqref{eq:MFLD_L2:longtime:upper_entropy_sandwich_idty} separately.
	\begin{itemize}
		\item 
		The second term, $-\tau \KLdiv{\nu}{\hat\mu}$, is upper-bounded by $0$.
		\item 
		To bound the third term, $-B_F(\nu | \mu)$, we use that $\beta$-Lipschitz continuity of the Wasserstein gradients of $F$ implies that
		$\abs{ B_F(\mu_1 | \mu_0) } \leq \beta W_2^2(\mu_1, \mu_0)$ for all $\mu_1, \mu_0 \in \PPP_2(\RR^d)$.
		Indeed, denoting by $\pi$ the optimal transport plan with $X_\sharp \pi = \mu_1, Y_\sharp \pi = \mu_0$ and by $\mu_s = (s X + (1-s) Y)_\sharp \pi$ the displacement interpolation between $\mu_1$ and $\mu_0$, we have
		\begin{align*}
			& B_F(\mu_1 | \mu_0) = F(\mu_1) - F(\mu_0) - \int F'[\mu_0] \,\d(\mu_1-\mu_0) \\
			&= F(\mu_1) - F(\mu_0)
			- \iint \left( F'[\mu_0](x) - F'[\mu_0](y) \right) \d\pi(x,y) \\
			&= \int_0^1 \d s \frac{d}{ds} F(\mu_s)
			- \iint \d\pi \int_0^1 \d s \frac{d}{ds} F'[\mu_0](sx + (1-s)y) \\
			&= \int_0^1 \d s \iint \nabla F'[\mu_s](s x + (1-s) y)^\top (x-y) \,\d\pi 
			- \iint \d\pi \int_0^1 \d s\, \nabla F'[\mu_0](sx + (1-s) y)^\top (x-y) \\
			&= \int_0^1 \d s \iint \d\pi \left\{ \nabla F'[\mu_s](s x + (1-s) y) -  \nabla F'[\mu_0](s x + (1-s) y) \right\}^\top (x-y)
		\end{align*}
		and so
		\begin{align*}
			\abs{B_F(\mu_1 | \mu_0)} 
			&\leq \int_0^1 \d s \iint \d\pi \norm{ \nabla F'[\mu_s](s x + (1-s) y) -  \nabla F'[\mu_0](s x + (1-s) y) } \norm{x-y} \\
			&\leq \int_0^1 \d s 
			~\underbrace{ \iint \d\pi \norm{x-y} }_{\leq W_2(\mu_1, \mu_0)}~
			\cdot
			~\underbrace{ \sup_{\RR^d} \norm{\nabla F'[\mu_s] -  \nabla F'[\mu_0]} }_{\leq \beta W_2(\mu_s, \mu_0) \leq \beta W_2(\mu_1, \mu_0)}~
			\leq \beta W_2^2(\mu_1, \mu_0).
		\end{align*}
		\item 
		To bound the first term, we further decompose it as follows:
		\begin{equation*}
			\tau \KLdiv{\mu}{\hmu} = \tau \KLdiv{\mu}{\nu} + \tau \KLdiv{\nu}{\hmu}
			- \tau \int \log \frac{\hmu}{\nu} ~\d(\mu-\nu).
		\end{equation*}
		For the second term in this new expression, $\KLdiv{\nu}{\hmu}$, since $F'[\nu] + \tau \log \nu = \cst$ by stationarity and since $\nu$ satisfies LSI with a constant $c$,
		\begin{equation*}
			\KLdiv{\nu}{\hmu} 
			\leq \frac{1}{2 c} \int \d\nu \norm{\nabla \log \frac{\nu}{\hmu}}^2
			= \frac{1}{2 c \tau^2} \int \d\nu \norm{\nabla F'[\mu] - \nabla F'[\nu]}^2
			\leq \frac{\beta^2}{2 c \tau^2} W_2^2(\mu, \nu).
		\end{equation*}
		For the final term, $-\int \log \frac{\hmu}{\nu} ~\d(\mu-\nu)$, 
		denoting $C = \int \d\nu \log \frac{\hmu}{\nu}$,
		\begin{align*}
			\abs{\int \d(\mu-\nu)\, \log \frac{\hmu}{\nu}}
			&= \abs{\int \d\nu\, \left( \frac{\mu}{\nu}-1 \right) \left( \log \frac{\hmu}{\nu} + C \right)} \\
			&\leq \sqrt{\chisq{\mu}{\nu}}~
			\sqrt{\int \d\nu \abs{\log \frac{\hmu}{\nu} + C}^2}
			\leq \frac12 \chisq{\mu}{\nu}
			+ \frac12 \int \d\nu \abs{\log \frac{\hmu}{\nu} + C}^2
		\end{align*}
		by Cauchy-Schwarz inequality.
		Now LSI implies PI with the same constant \cite{otto2000generalization}, so $\nu$ satisfies PI with constant $c$, and
		since $\int \d\nu \left( \log \frac{\hmu}{\nu} + C \right) = 0$ by definition,
		\begin{equation*}
			\int \d\nu \abs{\log \frac{\hmu}{\nu} + C}^2
			\leq \frac{1}{c}
			\int \d\nu \norm{\nabla \log \frac{\hmu}{\nu}}^2
			= \frac{1}{c \tau^2} \int \d\nu \norm{\nabla F'[\mu] - \nabla F'[\nu]}^2 \leq \frac{\beta^2}{c \tau^2} W_2^2(\mu, \nu).
		\end{equation*}
	\end{itemize}
	Gathering the terms yields the announced bound on $F_\tau(\mu) - F_\tau(\nu)$.
\end{proof}

\section[Results for MFL-DA]{Results for mean-field Langevin descent-ascent} \label{sec:MFL-DA}

In this section, we analyze the convergence of the mean-field Langevin descent-ascent (MFL-DA) dynamics, which is the dynamics over $(\mu^x_t, \mu^y_t) \in \PPP_2(\XXX) \times \PPP_2(\YYY)$ given by,
for a fixed function $k(x,y)$ and temperature parameter $\tau > 0$,
\begin{equation} \label{eq:MFL-DA:MFL-DA}
	\begin{cases}
		\partial_t \mu^x_t = \nabla \cdot \left( \mu^x_t \nabla \int_\YYY k(\cdot, y) \d\mu^y_t(y) \right) + \tau \Delta \mu^x_t \\
		\partial_t \mu^y_t = -\nabla \cdot \left( \mu^y_t \nabla \int_\XXX k(x, \cdot) \d\mu^x_t(x) \right) + \tau \Delta \mu^y_t.
	\end{cases}
\end{equation}
Here $\XXX$ and $\YYY$ could be any Riemannian manifold without boundaries, but for simplicity we will assume henceforth that they are tori: $\XXX = \TT^{d_x}$, $\YYY= \TT^{d_y}$. We stress that our results can be extended straightforwardly to more general settings, at the cost of heavier notations.

Observe that \eqref{eq:MFL-DA:MFL-DA} can be interpreted as the Wasserstein gradient descent-ascent flow for the min-max objective functional
\begin{equation*}
	\min_{\mu^x \in \PPP_2(\XXX)}~
	\max_{\mu^y \in \PPP_2(\YYY)}~ 
	\iint_{\XXX \times \YYY} k(x,y) \,\d\mu^x(x) \d\mu^y(y) + \tau H(\mu^x) - \tau H(\mu^y),
\end{equation*}
which is linearly strictly convex-concave
and thus has a unique saddle point, denoted $(\nu^x, \nu^y)$.
Moreover, as soon as $k$ is bounded, the proximal Gibbs distribution pairs $(\hmu^x, \hmu^y)$ defined by
$
	\hmu^x \propto \exp\left( -\frac1\tau \int_\YYY k(\cdot, y) \d\mu^y(y) \right) \d x,
	~
	\hmu^y \propto \exp\left( \frac1\tau \int_\XXX k(x, \cdot) \d\mu^x(x) \right) \d y
$
satisfy LSI uniformly for any pair $(\mu^x, \mu^y)$
\cite{lu2023two}.
One could thus
expect that \eqref{eq:MFL-DA:MFL-DA} should converge globally towards $(\nu^x, \nu^y)$ as $t \to \infty$.
This is known to indeed be the case under additional assumptions, such as $k$ being strongly convex-concave%
\footnote{There is no strongly convex function on a torus; what we refer to here are the instances of \eqref{eq:MFL-DA:MFL-DA} on $\XXX = \RR^{d_x}, \YYY= \RR^{d_y}$ where $k$ is strongly convex-concave. They are studied in detail in \cite{cai2024convergence}.}
or $\tau$ being large enough, but unknown for the general setting.

\subsection{Local \texorpdfstring{$\LLL^2$}{L2} convergence} \label{subsec:MFL-DA:localL2}

Whether MFL-DA converges globally is an open problem asked in \cite{wang2024open}.
In this work we focus on the more modest goal of studying the \emph{local} convergence properties of MFL-DA, and we show the following. Recall that $(\nu^x, \nu^y)$ is the unique equilibrium pair of \eqref{eq:MFL-DA:MFL-DA}.

\begin{theorem} \label{thm:MFL-DA:localL2}
	Suppose $k \in \CCC^2(\XXX \times \YYY)$ and let $M_{11} = \sup_{x, y} \norm{\nabla_x \nabla_y k}_{\mathrm{op}}$.
	Further suppose $\nu^x$ and $\nu^y$ satisfy PI with constants $c_{\PI}^x$ resp.\ $c_{\PI}^y$
	and let $c_{\PI} = \min\{ c_{\PI}^x, c_{\PI}^y \}$.
	Then for any $0< \eps < 1$,
	if $\chisq{\mu^x_0}{\nu^x} + \chisq{\mu^y_0}{\nu^y} \leq \frac{\tau^2 c_{\PI}^2 \eps^2}{4 M_{11}^2}$ then the MFL-DA dynamics \eqref{eq:MFL-DA:MFL-DA} satisfies
	\begin{equation*}
		\forall t \geq 0,~
		\chisq{\mu^x_t}{\nu^x} + \chisq{\mu^y_t}{\nu^y}
		\leq \left( 1 + \frac{M_{11}^2}{\tau^2 c_{\PI}^2 \eps^2} \right) e^{-2 \tau c_{\PI} (1-\eps) t}
		\left( \chisq{\mu^x_0}{\nu^x} + \chisq{\mu^y_0}{\nu^y} \right).
	\end{equation*}
\end{theorem}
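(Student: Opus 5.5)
We will mimic the proof of \autoref{thm:MFLD_L2:localL2:quad}, working on the product space. Set $f^x_t = \frac{\d\mu^x_t}{\d\nu^x}-1$, $f^y_t = \frac{\d\mu^y_t}{\d\nu^y}-1$, and let $L_x, L_y$ be the generators associated with $\nu^x, \nu^y$. Define the cross operators $K: \LLL^2_{\nu^y}\to\LLL^2_{\nu^x}$ by $(Kg)(x)=\int k(x,y)g(y)\d\nu^y(y)$ and its adjoint $K^*: \LLL^2_{\nu^x}\to\LLL^2_{\nu^y}$. The stationarity conditions $\tau\log\nu^x + \int k(\cdot,y)\d\nu^y = \cst$ and $\tau\log\nu^y - \int k(x,\cdot)\d\nu^x=\cst$ let us rewrite \eqref{eq:MFL-DA:MFL-DA} as
\begin{equation*}
\partial_t f^x_t = -\tau L_x f^x_t - L_x K f^y_t - \nabla^*(f^x_t \nabla K f^y_t),\qquad
\partial_t f^y_t = -\tau L_y f^y_t + L_y K^* f^x_t + \nabla^*(f^y_t \nabla K^* f^x_t),
\end{equation*}
with no remainder $R$, since the payoff is bilinear.

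The key step is to track $z_t = \norm{f^x_t}_{H^{-1}_{\nu^x}}^2 + \norm{f^y_t}_{H^{-1}_{\nu^y}}^2$. Differentiating, the linear cross terms give $-2\innerprod{f^x_t}{Kf^y_t}_{\nu^x} + 2\innerprod{f^y_t}{K^* f^x_t}_{\nu^y} = 0$. This cancellation is exactly the linear monotonicity (zero-sum) structure, and it plays the role of \eqref{eq:MFLD_L2:assum_F''nu} with $\tau_0=0$. The diagonal terms give $-2\tau(\norm{f^x_t}^2+\norm{f^y_t}^2)$. For the quadratic error terms we reuse the argument for $\mathrm{err}_1$: Cauchy--Schwarz in $H^{-1}$, the fact that $\nabla L^{-1}\nabla^*$ is a projector, and \autoref{lm:MFLD_L2:localL2:estim_nablaKf_infty} applied to the kernel $k$ viewed across the two spaces. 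That lemma's proof uses only the PI for the measure being integrated against, so it gives $\sup\norm{\nabla K f^y}\le M_{11}\norm{f^y}_{H^{-1}}$, and likewise for $K^*$. Each error term is then bounded by $\norm{f^x}_{H^{-1}}\norm{f^x}_\nu M_{11}\norm{f^y}_{H^{-1}}$ or its symmetric counterpart. Writing $a_t = \norm{f^x_t}^2+\norm{f^y_t}^2$ and using $2uv\le u^2+v^2$, the total error is at most $2M_{11} z_t\sqrt{a_t}$. We therefore recover $\dot z_t \le -2\tau a_t + 2M_{11} z_t\sqrt{a_t}$ with $c_{\PI} z_t\le a_t$, which is system \eqref{eq:MFLD_L2:localL2:zab_system_quad} with $\ol\tau=\tau$ and $c=c_{\PI}$.

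Next we bound $\dot a_t$ the same way as before. The diffusion gives $-2\tau b_t$ with $b_t = \norm{\nabla f^x_t}^2+\norm{\nabla f^y_t}^2$. The cross terms are $-2\innerprod{\nabla f^x_t}{(1+f^x_t)\nabla Kf^y_t}$ and its counterpart, each bounded by $2\norm{\nabla f^x}(1+\norm{f^x})M_{11}\norm{f^y}_{H^{-1}}$. Summing and applying Cauchy--Schwarz gives $\dot a_t\le -2\tau b_t + 2M_{11}\sqrt{b_t}\sqrt{z_t}(1+\sqrt{a_t})$, up to care with the factor $1+\sqrt{a_t}$ when combining the two species.

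From here the proof of \autoref{thm:MFLD_L2:localL2:quad} applies verbatim with $\tau_0=0$. First, $z_t$ contracts at rate $2\tau c_{\PI}(1-\eps/2)$ once $a_0\le \tau^2c_{\PI}^2\eps^2/(4M_{11}^2)$. Then a Grönwall argument on $a_t$ gives the prefactor $1+M_{11}^2/(\tau^2c_{\PI}^2\eps^2)$. The main obstacle is bookkeeping the mixed $x$/$y$ error terms so that the combined bounds keep exactly the one-species constants. If the symmetrization loses a factor of two, we would absorb it by measuring $z$ and $a$ as the sums above and checking each AM--GM step carefully. The rigorous justification of the weak-derivative manipulations on the torus is routine, since there $\nu^x,\nu^y$ have smooth positive densities.
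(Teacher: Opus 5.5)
Your proposal is correct and follows essentially the same route as the paper: the same $H^{-1}$ functional $Z_t$, the same cancellation of the cross terms in $K$, and the same error bounds via the gradient-field projector and \autoref{lm:MFLD_L2:localL2:estim_nablaKf_infty}, reducing to the system \eqref{eq:MFLD_L2:localL2:zab_system_quad} with $\ol\tau=\tau$. The bookkeeping you were worried about costs nothing: the paper bounds $bz'(1+a)+b'z(1+a')\le (bz'+b'z)(1+\max\{a,a'\})$ and then applies Cauchy--Schwarz, so the one-species constants carry over exactly.
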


This theorem implies that the long-time convergence rates of MFL-DA in $\chi^2$-divergence, KL-divergence, and squared Wasserstein distance are all lower-bounded by $2 \tau \min\{c_{\PI}^x, c_{\PI}^y\}$, for the same reason as for MFLD (\autoref{coroll:MFLD_L2:longtime:allmetrics}), \emph{provided that convergence to $(\nu^x, \nu^y)$ does occur}.
We note that this lower bound on the long-time rate is tight in the case where $k$ is additively separable: $k(x,y) = V(x) - W(y)$, since MFL-DA then separates into two independent overdamped Langevin dynamics with stationary measures $\nu^x \propto e^{-V/\tau} \d x$ resp.\ $\nu^y \propto e^{-W/\tau} \d y$.

The proof follows from similar ideas as for MFLD in the quadratic case with $\tau_0=0$.
There, the conditional positive-semi-definiteness of the second variation at optimum played a crucial role. Here, it is replaced by a crucial cancellation of terms in $k$, occurring at step \eqref{eq:MFL-DA:localL2:cancellation} of the proof.

\begin{proof}
	Let $K: \LLL^2_{\nu^x} \times \LLL^2_{\nu^y} \to \RR$ be the bilinear operator given by $K(f, g) = \iint_{\XXX \times \YYY} k(x,y) \allowbreak f(x) \d\nu^x(x) \, g(y) \d\nu^y(y)$.
	With abuse of notation, we will also write $f^\top K g = K(f, g)$ and denote by $K: \LLL^2_{\nu^y} \to \LLL^2_{\nu^x}$ and $K^\top: \LLL^2_{\nu^x} \to \LLL^2_{\nu^y}$ the operators given by
	\begin{equation*}
		(K g)(x) = \int_\YYY k(x, y) \,g(y) \d\nu^y(y),
		\qquad\qquad
		(K^\top f)(y) = \int_\XXX k(x, y) \,f(x) \d\nu^x(x).
	\end{equation*}
	Furthermore, let 
	$\nabla_x^* = -\frac{1}{\nu^x} \nabla \cdot (\nu^x \,\bullet)$
	and
	$L_x = \nabla_x^* \nabla_x$ in $\LLL^2_{\nu^x}$, and likewise for $\nabla_y^*$ and $L_y$.
	Since $\nu^x, \nu^y$ satisfy PI then $L_x, L_y$ are invertible in $\LLL^2_{\nu^x} \cap \{1\}^\perp$ resp.\ $\LLL^2_{\nu^y} \cap \{1\}^\perp$ and
	\begin{equation*}
		\forall f \in \LLL^2_{\nu^x} ~\text{s.t.} \int_\XXX f \,\d\nu^x = 0,~~
		\norm{f}_{H^{-1}_x}^2 \coloneqq \innerprod{f}{L_x^{-1} f}_{\nu^x}
		\leq (c_{\PI}^x)^{-1} \norm{f}_{\nu^x}^2
	\end{equation*}
    and likewise for $\norm{g}_{H^{-1}_y}^2
    \coloneqq \innerprod{g}{L_y^{-1} g}_{\nu^y}$.
	
	From $(\mu^x_t, \mu^y_t)_t$ the MFL-DA dynamics given by \eqref{eq:MFL-DA:MFL-DA}, let $(f_t, g_t)_t$ in $\LLL^2_{\nu^x} \times \LLL^2_{\nu^y}$ be defined by 
	$f_t = \frac{\d\mu^x_t}{\d\nu^x}-1$ and
	$g_t = \frac{\d\mu^y_t}{\d\nu^y}-1$.
	We claim their time-evolution is given by
	\begin{equation} \label{eq:MFL-DA:ftgt}
		\begin{cases}
			\partial_t f_t = -\tau L_x f_t - L_x K g_t - \nabla_x^* (f_t \nabla K g_t) \\
			\partial_t g_t = -\tau L_y g_t + L_y K^\top f_t + \nabla_y^* (g_t \nabla K^\top f_t).
		\end{cases}
	\end{equation}
	Indeed,
	denoting $(\tK \mu^y)(x) = \int_\YYY k(x, y) \d\mu^y(y)$
	and $(\tK^\top \mu^x)(y) = \int_\XXX k(x, y) \d\mu^x(x)$,
	the first-order stationarity condition for the equilibrium pair $(\nu^x, \nu^y)$ translates to
	\begin{equation*}
		\tau \log \nu^x = -\tK \nu^y + \cst,
		\qquad\qquad
		\tau \log \nu^y = \tK^\top \nu^x + \cst
	\end{equation*}
	(equivalently, $(\nu^x, \nu^y)$ is equal to its own proximal Gibbs pair).
	Hence,
	\begin{align*}
		\partial_t \mu^x_t 
		&= \nabla \cdot (\mu^x_t \nabla \tK \mu^y_t) + \tau \Delta \mu^x_t \\
		&= \nabla \cdot (\mu^x_t \nabla \tK (\mu^y_t-\nu^y))
		+ \tau \nabla \cdot (\mu^x_t \nabla (-\log \nu^x))
		+ \tau \Delta \mu^x_t \\
		&= \tau \nabla \cdot \left( \mu^x_t \nabla \log \frac{\mu^x_t}{\nu^x} \right) 
		+ \nabla \cdot (\mu^x_t \nabla \tK (\mu^y_t - \nu^y)) \\
		\partial_t f_t 
		&= \tau \frac1{\nu^x} \nabla \cdot (\nu^x \nabla f_t)
		+ \frac{1}{\nu^x} \nabla \cdot \left( \nu^x (f_t+1) \nabla K g_t \right) \\
		&= -\tau L_x f_t - \nabla_x^* \left( (f_t+1) \nabla K g_t \right)
	\end{align*}
	and likewise for $\partial_t g_t$.
	
	Let us compute the time-derivatives of 
	\begin{equation*}
		Z_t \coloneqq \norm{f_t}_{H^{-1}_x}^2 + \norm{g_t}_{H^{-1}_y}^2
		~~~~\text{and}~~~~
		A_t \coloneqq \norm{f_t}_{\nu^x}^2 + \norm{g_t}_{\nu^y}^2.
	\end{equation*}
	Note that $Z_t \leq c_{\PI}^{-1} A_t$.
	We have
	\begin{align*}
		\frac{d}{dt} \norm{f_t}_{H^{-1}_x}^2
		&= 2 \innerprod{f_t}{L_x^{-1} \partial_t f_t}_{\nu^x}
		= -2 \tau \norm{f_t}_{\nu^x}^2
		- 2 f_t^\top K g_t
		- 2 \innerprod{f_t}{L_x^{-1} \nabla_x^* (f_t \nabla K g_t)}_{\nu^x} \\
		\frac{d}{dt} \norm{g_t}_{H^{-1}_y}^2
		&= 2 \innerprod{g_t}{L_y^{-1} \partial_t g_t}_{\nu^y}
		= -2 \tau \norm{g_t}_{\nu^y}^2
		+ 2 f_t^\top K g_t
		+ 2 \innerprod{g_t}{L_y^{-1} \nabla_y^* (g_t \nabla K^\top f_t)}_{\nu^y},
	\end{align*}
	and so, since the terms in $K$ cancel out,
	\begin{align}
		\dot{Z}_t
		&= \frac{d}{dt} \left( \norm{f_t}_{H^{-1}_x}^2 + \norm{g_t}_{H^{-1}_y}^2 \right)   \nonumber \\
		&= -2 \tau A_t 
		- 2 \innerprod{f_t}{L_x^{-1} \nabla_x^* (f_t \nabla K g_t)}_{\nu^x}
		+ 2 \innerprod{g_t}{L_y^{-1} \nabla_y^* (g_t \nabla K^\top f_t)}_{\nu^y}.
	\label{eq:MFL-DA:localL2:cancellation}
	\end{align}
	Let us control the last two terms in absolute value. By Cauchy-Schwarz inequality w.r.t.\ the inner product $\innerprod{\cdot}{L_x^{-1} \,\cdot}_{\nu^x}$,
	\begin{align*}
		\abs{\innerprod{f_t}{L_x^{-1} \nabla_x^* (f_t \nabla K g_t)}_{\nu^x}}
		&\leq \sqrt{\innerprod{f_t}{L_x^{-1} f_t}_{\nu^x}}
		~ \sqrt{\innerprod{f_t \nabla K g_t}{\nabla_x L_x^{-1} \nabla_x^* (f_t \nabla K g_t)}_{\nu^x}}.
	\end{align*}
	Note that in the second factor here, $\nabla_x L_x^{-1} \nabla_x^*$ is precisely the orthogonal projector, in $\bm\LLL^2_{\nu^x}$, onto the subspace consisting of gradient fields, and so $0 \preceq \nabla_x L_x^{-1} \nabla_x^* \preceq \id_{\bm\LLL^2_{\nu^x}}$.
	Hence the second factor can be further bounded by 
	\begin{align*}
		\norm{f_t \nabla K g_t}_{\nu^x} \leq \norm{f_t}_{\nu^x} \cdot \sup_{x \in \XXX} \norm{\nabla (K g_t)(x)}
		\leq \norm{f_t}_{\nu^x} \cdot M_{11} \norm{g_t}_{H^{-1}_y},
	\end{align*}
	where the second inequality follows from \autoref{lm:MFLD_L2:localL2:estim_nablaKf_infty}.
	Thus, by the symmetric bound for the third term in \eqref{eq:MFL-DA:localL2:cancellation},
	and since $\forall A, B \in \RR,~ 2 A B \leq A^2+B^2$ and $A+B \leq \sqrt{2} \sqrt{A^2+B^2}$,
	\begin{align*}
		\dot{Z}_t &\leq -2 \tau A_t 
		+ 2 \norm{f_t}_{H^{-1}_x} \cdot \norm{f_t}_{\nu^x} \cdot M_{11} \norm{g_t}_{H^{-1}_y}
		+ 2 \norm{g_t}_{H^{-1}_y} \cdot \norm{g_t}_{\nu^y} \cdot M_{11} \norm{f_t}_{H^{-1}_x} \\
		&= -2 \tau A_t 
		+ 2 M_{11} \norm{f_t}_{H^{-1}_x} \norm{g_t}_{H^{-1}_y} \left( \norm{f_t}_{\nu^x} + \norm{g_t}_{\nu^y} \right) \\
		&\leq -2 \tau A_t + 2 M_{11} Z_t \sqrt{A_t}.
	\end{align*}
	For $A_t$, we have the rough estimates
	\begin{align*}
		\frac{d}{dt} \norm{f_t}^2_{\nu^x}
		&= 2 \innerprod{f_t}{\partial_t f_t}_{\nu^x}
		= -2 \tau \innerprod{f_t}{L_x f_t}_{\nu^x}
		- 2 \innerprod{\nabla f_t}{(f_t+1) \nabla K g_t}_{\nu^x} \\
		&\leq -2 \tau \norm{\nabla f_t}_{\nu^x}^2
		+ 2 \norm{\nabla f_t}_{\nu^x} \cdot (1 + \norm{f_t}_{\nu^x}) \cdot \sup_{\XXX} \norm{\nabla K g_t} \\
		&\leq -2 \tau \norm{\nabla f_t}_{\nu^x}^2
		+ 2 M_{11} \norm{\nabla f_t}_{\nu^x} \norm{g_t}_{H^{-1}_y} (1 + \norm{f_t}_{\nu^x})
	\end{align*}
	where in the last line we used \autoref{lm:MFLD_L2:localL2:estim_nablaKf_infty} again,
	and symmetrically for $\norm{g_t}^2_{\nu^y}$, so
	\begin{align*}
		\dot{A}_t &\leq -2 \tau \left( \norm{\nabla f_t}_{\nu^x}^2 + \norm{\nabla g_t}_{\nu^x}^2 \right)
		+ 2 M_{11} \norm{\nabla f_t}_{\nu^x} \norm{g_t}_{H^{-1}_y} (1 + \norm{f_t}_{\nu^x}) \\
		&\qquad\qquad\qquad\qquad\qquad\qquad
		+ 2 M_{11} \norm{\nabla g_t}_{\nu^y} \norm{f_t}_{H^{-1}_x} (1 + \norm{g_t}_{\nu^y}) \\
		&\leq -2 \tau B_t + 2 M_{11} \sqrt{B_t} \sqrt{Z_t} (1 + \sqrt{A_t})
	\end{align*}
	where we introduce $B_t \coloneqq \norm{\nabla f_t}_{\nu^x}^2 + \norm{\nabla g_t}_{\nu^x}^2$.
    For the second inequality we used that 
    $\forall z, z', a, a', b, b', \allowbreak
    b z' (1+a) + b' z (1+a') \leq (b z' + b' z) (1 + \max\{a,a'\}) \leq \sqrt{b^2 + b^{\prime 2}} \sqrt{z^2 + z^{\prime 2}} (1 + \sqrt{a^2 + a^{\prime 2}})$ by Cauchy-Schwarz inequality.
	
	Thus, denoting $c = \min\{c_{\PI}^x, c_{\PI}^y\}$ and $M = M_{11}$ for concision, we have
	\begin{equation*}
		\begin{aligned}
			\dot{Z}_t &\leq -2 \tau A_t + 2 M Z_t \sqrt{A_t} \\
			\dot{A}_t &\leq -2 \tau B_t + 2 M \sqrt{B_t} \sqrt{Z_t} (1+\sqrt{A_t})
		\end{aligned}
		\qquad\qquad \text{and} \qquad\qquad
		c Z_t \leq A_t \leq c^{-1} B_t.
	\end{equation*}
	This is exactly the same system of inequalities as \eqref{eq:MFLD_L2:localL2:zab_system_quad} in the proof of \autoref{thm:MFLD_L2:localL2:quad}. So we can follow the same steps and obtain the same conclusions as in that proof.
	Namely we get that for any $0< \eps < 1$,
	if $Z_0 \leq \frac{\tau^2 c (\eps/2)^2}{M^2}$, which can be ensured by assuming $A_0 \leq \frac{\tau^2 c^2 (\eps/2)^2}{M^2}$, then
	$Z_t \leq e^{-2 \tau c (1-\eps/2) t} Z_0$ and 
	$A_t \leq \left( 1 + \frac{M^2}{\tau^2 c^2 \eps^2} \right) e^{-2 \tau c (1-\eps) t} A_0$.
\end{proof}


\begin{remark}[Conjectured spectral characterization of the exact rate] \label{rk:MFL-DA:minReSp}
	Similar to \autoref{subsubsec:MFLD_L2:localL2:exact_rate} for MFLD,
	we expect the exact long-time convergence rate of MFL-DA to be equal to $2 \min \{ \Re(\lambda); \lambda \in \spectrum(M) \}$ where $M$ is the operator over $(\LLL^2_{\nu^x} \cap \{1\}^\perp) \times (\LLL^2_{\nu^y} \cap \{1\}^\perp)$ corresponding to the dominant terms in the
	$\LLL^2$ formulation \eqref{eq:MFL-DA:ftgt} of the dynamics:
	\begin{equation*}
		M \begin{pmatrix}
			f \\ g
		\end{pmatrix}
		= \begin{pmatrix}
			\tau L_x f + L_x K g \\
			\tau L_y g - L_y K^\top f
		\end{pmatrix},
		~~\text{or symbolically,}~~
		M = \begin{bmatrix}
			\tau L_x & L_x K \\
			-L_y K^\top & \tau L_y
		\end{bmatrix}.
	\end{equation*}
	Equivalently, instead of $M$ we may consider the symmetric operator over $\overline{\nabla \CCC^\infty_c(\XXX)}^{\,\bm\LLL^2_{\nu^x}} \times \overline{\nabla \CCC^\infty_c(\YYY)}^{\,\bm\LLL^2_{\nu^y}}$
	given by
	$
		\tM = \begin{bmatrix}
			\tau \nabla_x \nabla_x^* & \nabla_x K \nabla_y^* \\
			-\nabla_y K^\top \nabla_x^* & \tau \nabla_y \nabla_y^*
		\end{bmatrix}
	$,
	since $\tM$ is similar to $M$ via conjugation by 
	$\begin{bmatrix}
		\nabla_x^* & \\
		& \nabla_y^*
	\end{bmatrix}$.
	
	
	The intuition that $\tM$ determines the long-time convergence rate of MFL-DA can also be obtained by considering the Wasserstein Hessian of the min-max objective
	$\FFF_\tau(\mu^x, \mu^y) = \iint_{\XXX\times \YYY} k \,\d(\mu^x \otimes \mu^y) + \tau H(\mu^x) - \tau H(\mu^y)$, 
	since it also rewrites
	$\iint_{\XXX \times \YYY} k \,\d((\mu^x-\nu^x) \otimes (\mu^y - \nu^y)) + \tau \KLdiv{\mu^x}{\nu^x} - \tau \KLdiv{\mu^y}{\nu^y} + \cst$
	by the first-order stationarity condition.
	Namely, one can show that
	\begin{align*}
		\Hess_{(\nu^x,\nu^y)} \FFF_\tau\left(
			\begin{pmatrix}
				\Phi^x \\ \Phi^y
			\end{pmatrix},
			\begin{pmatrix}
				\Phi^x \\ \Phi^y
			\end{pmatrix}
		\right)
		&= 2 \iint_{\XXX \times \YYY} \Phi^x(x) \cdot \nabla_x \nabla_y k(x,y) \cdot \Phi^y(y) \,\d\nu^x(x) \d\nu^y(y) \\
		&~~~~ + \tau \Hess_{\nu^x} \KLdiv{\cdot}{\nu^x}(\Phi^x, \Phi^x)
		- \tau \Hess_{\nu^y} \KLdiv{\cdot}{\nu^y}(\Phi^y, \Phi^y) \\
		&= \begin{pmatrix}
			\Phi^x \\ \Phi^y
		\end{pmatrix}^\top
		\underbrace{
			\begin{bmatrix}
				\tau \nabla_x \nabla_x^* & \nabla_x K \nabla_y^* \\
				\nabla_y K^\top \nabla_x^* & -\tau \nabla_y \nabla_y^*
			\end{bmatrix}
		}_{\text{
			$=$
			{\tiny $\begin{bmatrix}
				\id & \\
				& -\id
			\end{bmatrix} $}
			$\tM$
		}}
		\begin{pmatrix}
			\Phi^x \\ \Phi^y
		\end{pmatrix},
	\end{align*}
	where the left-hand side is defined as $\restr{\frac{d^2}{ds^2} \FFF_\tau((\id + s \Phi^x)_\sharp \nu^x, (\id + s \Phi^y)_\sharp \nu^y)}{s=0}$.
%
\end{remark}

\subsection{Consequences for the two-timescale dynamics}

It was shown by \cite{lu2023two} that a two-timescale variant of MFL-DA,
\begin{equation} \label{eq:MFL-DA:2scale}
	\begin{cases}
		\partial_t \mu^x_t = \nabla \cdot \left( \mu^x_t \nabla \int_\YYY k(\cdot, y) \d\mu^y_t(y) \right) + \tau \Delta \mu^x_t \\
		\partial_t \mu^y_t = \Gamma \left[ -\nabla \cdot \left( \mu^y_t \nabla \int_\XXX k(x, \cdot) \d\mu^x_t(x) \right) + \tau \Delta \mu^y_t \right],
	\end{cases}
\end{equation}
converges globally in KL-divergence when the relative timescale $\Gamma$ is large enough or small enough.
\cite{an2025convergence} analyzed the same dynamics using a specialized coupling technique and extended the range of values of $\Gamma$ for which global convergence is guaranteed, this time in $1$-Wasserstein distance.

We note that our convergence analysis also applies to this two-timescale dynamics simply by adapting the definitions of $Z_t, A_t, B_t$ in the proof, and we obtain the following.

\begin{corollary}
	Suppose $k \in \CCC^2(\XXX \times \YYY)$ and let $M_{11} = \sup_{x, y} \norm{\nabla_x \nabla_y k}_{\mathrm{op}}$.
	Further suppose $\nu^x$ and $\nu^y$ satisfy PI with constants $c_{\PI}^x$ resp.\ $c_{\PI}^y$
	and let $c^\Gamma_{\PI} = \min\{ c_{\PI}^x, \Gamma c_{\PI}^y \}$.
	Then for any $0< \eps < 1$,
	if $\chisq{\mu^x_0}{\nu^x} + \chisq{\mu^y_0}{\nu^y} \leq \frac{\tau^2 (c^\Gamma_{\PI})^2 \eps^2}{4 \Gamma M_{11}^2}$ then
	\begin{equation*}
		\forall t \geq 0,~
		\chisq{\mu^x_t}{\nu^x} + \chisq{\mu^y_t}{\nu^y}
		\leq \left( 1 + \frac{\Gamma M^2}{\tau^2 (c^\Gamma_{\PI})^2 \eps^2} \right) e^{-2 \tau c^\Gamma_{\PI} (1-\eps) t}
		\left( \chisq{\mu^x_0}{\nu^x} + \chisq{\mu^y_0}{\nu^y} \right).
	\end{equation*}
\end{corollary}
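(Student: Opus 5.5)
We follow the proof of \autoref{thm:MFL-DA:localL2} verbatim, only reweighting the $y$-components by $\Gamma^{-1}$ so that the cross terms in $K$ still cancel. With $f_t,g_t$ as before, the $\LLL^2$ formulation \eqref{eq:MFL-DA:ftgt} becomes
\begin{equation*}
	\partial_t f_t = -\tau L_x f_t - \nabla_x^*\big((f_t+1)\nabla K g_t\big),
	\qquad
	\partial_t g_t = \Gamma\big[-\tau L_y g_t + \nabla_y^*\big((g_t+1)\nabla K^\top f_t\big)\big].
\end{equation*}
We then set
\begin{equation*}
	Z_t = \norm{f_t}_{H^{-1}_x}^2 + \Gamma^{-1}\norm{g_t}_{H^{-1}_y}^2,
	\qquad
	A_t = \norm{f_t}_{\nu^x}^2 + \norm{g_t}_{\nu^y}^2,
	\qquad
	B_t = \norm{\nabla f_t}_{\nu^x}^2 + \Gamma\norm{\nabla g_t}_{\nu^y}^2 .
\end{equation*}
Differentiating $Z_t$ gives $-2\tau A_t$, plus the bilinear terms $\mp 2 f_t^\top K g_t$, which cancel exactly because of the $\Gamma^{-1}$ weight. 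The two cubic error terms are bounded exactly as before, via the projector bound $\nabla L^{-1}\nabla^*\preceq \id$ and \autoref{lm:MFLD_L2:localL2:estim_nablaKf_infty}. The result is
\begin{equation*}
	\dot Z_t \le -2\tau A_t + 2M_{11}\norm{f_t}_{H^{-1}_x}\norm{g_t}_{H^{-1}_y}\big(\norm{f_t}_{\nu^x}+\norm{g_t}_{\nu^y}\big).
\end{equation*}
Since $\norm{g_t}_{H^{-1}_y}\le \sqrt{\Gamma}\sqrt{Z_t}$ and $\norm{f_t}_{H^{-1}_x}\le\sqrt{Z_t}$, this yields $\dot Z_t\le -2\tau A_t+2\sqrt{\Gamma}M_{11}Z_t\sqrt{2A_t}$. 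Handled more carefully, as in the original proof, this becomes $\dot Z_t \le -2\tau A_t + 2\sqrt\Gamma M_{11} Z_t\sqrt{A_t}$ up to an absolute constant.

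For $A_t$, the rough estimates become
\begin{equation*}
	\tfrac{d}{dt}\norm{f_t}^2_{\nu^x} \le -2\tau\norm{\nabla f_t}^2 + 2M_{11}\norm{\nabla f_t}\norm{g_t}_{H^{-1}_y}(1+\norm{f_t}),
\end{equation*}
together with the corresponding bound for $g_t$ carrying a factor $\Gamma$. Cauchy--Schwarz with the weights above then gives
\begin{equation*}
	\dot A_t\le -2\tau B_t + 2\sqrt{\Gamma}M_{11}\sqrt{B_t}\sqrt{Z_t}\,(1+\sqrt{A_t}).
\end{equation*}
The comparison inequalities become $c^\Gamma_{\PI} Z_t\le A_t\le (c^\Gamma_{\PI})^{-1}B_t$. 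The left inequality uses $\Gamma^{-1}\norm{g}_{H^{-1}_y}^2\le (\Gamma c^y_{\PI})^{-1}\norm{g}^2$ together with $\norm{f}_{H^{-1}_x}^2\le (c^x_{\PI})^{-1}\norm f^2$. The right inequality uses $\Gamma\norm{\nabla g}^2\ge \Gamma c^y_{\PI}\norm g^2$.

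This is precisely the system \eqref{eq:MFLD_L2:localL2:zab_system_quad} with $\ol\tau=\tau$, $c=c^\Gamma_{\PI}$ and $M=\sqrt\Gamma M_{11}$. Running the quadratic-case argument then gives contraction of $Z_t$ once $Z_0\le \tau^2 c(\eps/2)^2/M^2$, which is ensured by $A_0\le \tau^2c^2\eps^2/(4\Gamma M_{11}^2)$. The defective Grönwall step then gives $A_t\le(1+\Gamma M_{11}^2/(\tau^2c^2\eps^2))e^{-2\tau c(1-\eps)t}A_0$, matching the statement.

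The main obstacle is choosing the weights so that, simultaneously, the $K$ cross terms cancel in $\dot Z_t$ and the comparison constants come out as $c^\Gamma_{\PI}$ with exactly one factor $\sqrt\Gamma$ entering $M$. We also need to check that the Cauchy--Schwarz combinations of the two asymmetric error terms do not introduce extra factors of $\max(\Gamma,\Gamma^{-1})$. If such factors appear, we would absorb them by tightening the bound on $\norm{g_t}_{H^{-1}_y}$ through the weighted $Z_t$.
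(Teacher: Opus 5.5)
Your proposal is correct and follows the paper's proof essentially verbatim. It uses the same weighted quantities $Z_t, A_t, B_t$, the same constants $M=\sqrt{\Gamma}M_{11}$ and $c=c^\Gamma_{\PI}$, and the same reduction to the system \eqref{eq:MFLD_L2:localL2:zab_system_quad}. The hedge ``up to an absolute constant'' in the $\dot Z_t$ bound is unnecessary, and your final paragraph's worry about extra factors of $\max(\Gamma,\Gamma^{-1})$ is likewise unfounded, as your own $\dot A_t$ computation already shows. For the $\dot Z_t$ bound, the weighted AM--GM inequality $\norm{f_t}_{H^{-1}_x}\norm{g_t}_{H^{-1}_y}\le \frac{\sqrt{\Gamma}}{2}Z_t$, combined with $\norm{f_t}_{\nu^x}+\norm{g_t}_{\nu^y}\le\sqrt{2A_t}$, gives exactly $\sqrt{2}\sqrt{\Gamma}M_{11}Z_t\sqrt{A_t}\le 2\sqrt{\Gamma}M_{11}Z_t\sqrt{A_t}$, so the stated constants come out with no loss.
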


Thus, adjusting the relative timescale allows to correct for differences in the magnitude of $c_{\PI}^x$ and $c_{\PI}^y$, since it yields a local convergence rate of $2 \tau \min\{ c_{\PI}^x, \Gamma c_{\PI}^y \}$.
Actually this fact is quite natural if one notices that $\Gamma$ can also be interpreted as a spatial rescaling of $\YYY$, i.e., that \eqref{eq:MFL-DA:2scale} is equivalent to the single-timescale dynamics \eqref{eq:MFL-DA:MFL-DA} if we replace $\nabla_y$ by $\Gamma^{-1/2} \nabla_y$.

\begin{proof}
	We reuse the same notations as in the proof of \autoref{thm:MFL-DA:localL2}, except we set
	\begin{equation*}
		Z_t \coloneqq \norm{f_t}_{H^{-1}_x}^2 + \Gamma^{-1} \norm{g_t}_{H^{-1}_y}^2,
		~~~~
		A_t \coloneqq \norm{f_t}_{\nu^x}^2 + \norm{g_t}_{\nu^y}^2,
		~~~~
		B_t \coloneqq \norm{\nabla f_t}_{\nu^x}^2 + \Gamma \norm{\nabla g_t}_{\nu^y}^2
	\end{equation*}
	and $M = \sqrt{\Gamma}\, M_{11}$ and $c = c_{\PI}^\Gamma = \min\{c_{\PI}^x, \Gamma c_{\PI}^y\}$.
	One can check that, after adapting the computations, we again end up with the same system of inequalities \eqref{eq:MFLD_L2:localL2:zab_system_quad}, from where we can follow the same steps as in the proof of \autoref{thm:MFLD_L2:localL2:quad}, and the result follows.
\end{proof}

%

\section[Results for multi-species flows]{Results for multi-species flows} \label{sec:MFG}

In this section we adapt our analysis to general dynamics of the following form, corresponding to multi-species flows with uniform diffusion.
Consider $N$ mappings 
\begin{equation*}
	\forall I \in \{1,...,N\},~
	V_I: \PPP_2(\RR^{d_1}) \times ... \times \PPP_2(\RR^{d_N}) \times \RR^{d_I} \to \RR
\end{equation*}
and write $V_I[\mu^1, ..., \mu^N](z^I)$ for the evaluation of $V_I$ at $(\mu^1, ..., \mu^N, z^I)$,
to be interpreted as a potential over $z^I \in \RR^{d_I}$ dependent on the measures $\mu^1, ..., \mu^N$.
Let $\tau>0$ and suppose the $V_I$'s satisfy appropriate regularity assumptions such that the following system of PDEs is well-posed, where $\mu^I_t \in \PPP_2(\RR^{d_I})$ for each $I$:
\begin{equation} \label{eq:MFG:dyn_mu}
	\forall t \geq 0, \forall I \in \{1,...,N\},~
	\partial_t \mu^I_t = \nabla \cdot \left( \mu^I_t \nabla V_I[\mu^1_t, ..., \mu^N_t] \right) + \tau \Delta \mu^I_t.
\end{equation}
For ease of notation, introduce the shorthands
$\ol\PPP_2 = \PPP_2(\RR^{d_1}) \times ... \times \PPP_2(\RR^{d_n})$
and, generically for any $(\mu^I)_I$, $\ol\mu = (\mu^1, ..., \mu^N) \in \ol\PPP_2$.

We refer to \cite{conger2025monotone} for a discussion of the literature related to such dynamics.
As a brief summary, let us only explain how \eqref{eq:MFG:dyn_mu} can be interpreted
as the time-evolution of a mean-field system with $N$ types (``species'') of agents.
Namely, suppose that there is an infinite number of indistinguishable agents from each species $I$, and that each agent is made to choose some strategy $z^I \in \RR^{d_I}$; each species $I$ can then be described by the probability distribution of the chosen strategies $\mu^I \in \PPP(\RR^{d_I})$.
Moreover, suppose each agent $i$ of species $I$ evolves their strategy $z^{I,i}$ by following the gradient flow of a potential $V_I[\ol\mu]: \RR^{d_I} \to \RR$, dependent on the strategy distributions of all species, with added isotropic noise of variance $\tau$.
Then the time-evolution of the distributions $\mu^1, ..., \mu^N$ is precisely given by \eqref{eq:MFG:dyn_mu}.

In this work, we view \eqref{eq:MFG:dyn_mu} as a dynamical system over $\ol\mu_t \in \ol\PPP_2$, and we study its local convergence to an equilibrium tuple under the following structural assumption.

\begin{assumption} \label{assum:MFG_B}
	The system of PDEs \eqref{eq:MFG:dyn_mu} is well-posed and has an equilibrium tuple $\ol\nu = (\nu^1, ..., \nu^N)$, i.e., such that
	$\forall I \in \{1, ..., N\},~ V_I[\ol\nu] + \tau \log \nu^I = \cst$ on $\RR^{d_I}$.
	Moreover, the functions
	\begin{equation*}
		k_{IJ}(z^I, z^J) = \frac{\delta V_I[\ol\nu](z^I)}{\delta \nu^J(z^J)}
		\qquad
		(I, J \in \{1, ..., N\})
	\end{equation*}
	satisfy that
	for all $f^1 \in \LLL^2_{\nu^1} \cap \{1\}^\perp, ~ ... \, , ~ f^N \in \LLL^2_{\nu^N} \cap \{1\}^\perp$,
	\begin{equation} \label{eq:MFG:assum_kIJ}
		\sum_{I,J}
		\iint_{\RR^{d_I} \times \RR^{d_J}} k_{IJ}(z^I, z^J) ~ f^I(z^I) f^J(z^J) ~\d\nu^I(z^I) \d\nu^J(z^J)
		\geq 
		-\tau_0 \sum_I \int_{\RR^{d_I}} \abs{f^I}^2 \d\nu^I
	\end{equation}
	for some $0 \leq \tau_0 < \tau$.
	Furthermore, for each $I \in \{1, ..., N\}$, $\nu^I$ satisfies PI with a constant $c_{\PI}^I$.
\end{assumption}

\begin{remark}
	We considered a uniform diffusion coefficient $\tau$ in \eqref{eq:MFG:dyn_mu}, but our analyses can be adapted to treat variants of the dynamics with a different coefficient $\tau_I$ for each $I$.
	Likewise, for $\tau_0$, our analysis can be adapted to handle the case where the right-hand side of \eqref{eq:MFG:assum_kIJ} is replaced by $-\sum_I \tau_{0I} \int \abs{f^I}^2 \d\nu^I$, yielding a local convergence rate estimate dependent on $\min_I (\tau_I - \tau_{0I})$.
\end{remark}

\begin{remark}
	In the case where the $V_I$ derive from ``loss'' functionals $F_I: \ol\PPP_2 \to \RR$ via $V_I[\ol\mu](z^I) = \frac{\delta F_I(\ol\mu)}{\delta \mu^I(z^I)}$, the dynamics \eqref{eq:MFG:dyn_mu} can be interpreted as a differentiable game over the Wasserstein space, in the sense of \cite{letcher2019differentiable}.
	With this perspective, the assumption \eqref{eq:MFG:assum_kIJ} corresponds to a lower bound on the symmetric part of the Jacobian of the game. 
\end{remark}

Our results in this setting are formalized in the following two theorems.
Their proofs are very similar to the case of MFLD detailed in \autoref{sec:MFLD_L2}, and are placed in \autoref{subsec:MFG:pfs} below.

\begin{theorem} \label{thm:MFG:gen}
	For each $I$, let $R_I: \ol\PPP_2 \times \RR^{d_I} \to \RR$ be the mapping such that
	\begin{equation*}
		\forall \ol\mu \in \ol\PPP_2,~
		V_I[\ol\mu] = V_I[\ol\nu] + \sum_J \int_{\RR^{d_J}} k_{IJ}(\cdot, z^J) \,\d(\mu_J - \nu_J)(z^J) + R_I[\ol\mu]
		~~\text{over $\RR^{d_I}$}.
	\end{equation*}
	Note that $R_I[\ol\nu]=\cst$ on $\RR^{d_I}$ and $\frac{\delta R_I[\ol\nu]}{\delta \nu^J} = \cst$ on $\RR^{d_I} \times \RR^{d_J}$.
	Assume the following quantities are finite, for each $I \in \{1, ..., N\}$:
	\begin{align*}
		M_{11}^{IJ} &= \sup_{z^I, z^J} \norm{\nabla_{z^I} \nabla_{z^J} \,k_{IJ}}_{\mathrm{op}}, \\
		M_{111}^{IJK} &= \sup_{\substack{\ol\mu \in \ol\PPP_2 \\ z^I, z^J, z^K}}
		\norm{\nabla_{z^I} \nabla_{z^J} \nabla_{z^K}\, \frac{\delta^2 R_I[\ol\mu](z^I)}{\delta \mu^J(z^J) \delta \mu^K(z^K)}}_{\mathrm{op}},
		\quad
		M_{12}^{IJ} = \sup_{\substack{\ol\mu \in \ol\PPP_2 \\ z^I, z^J}}
		\norm{\nabla_{z^I} \nabla^2_{z^J}\, \frac{\delta R_I[\ol\mu](z^I)}{\delta \mu^J(z^J)}}_{\mathrm{op}}.
	\end{align*}
	For concision denote $M_{11} = \sup_{I,J} M_{11}^{IJ}$ and likewise for $M_{111}, M_{12}$, 
	and $c_{\PI} = \min \left\{ c_{\PI}^1, ..., c_{\PI}^N \right\}$.
	Then for any $0<\eps \leq \frac18$, there exist
	$r_0^{-1}, C = \mathrm{poly}\left( \eps^{-1}, (\tau-\tau_0)^{-1}, c_{\PI}^{-1}, M_{11}, M_{111}, M_{12}, N \right)$
	such that if $\sum_I \chisq{\mu^I_0}{\nu^I} \leq r_0$ then
	\begin{equation*}
		\forall t \geq 0,~
		\sum_I \chisq{\mu^I_t}{\nu^I}
		\leq C e^{-2 (\tau-\tau_0) c_{\PI} (1-\eps) t}
		\sum_I \chisq{\mu^I_0}{\nu^I}.
	\end{equation*}
\end{theorem}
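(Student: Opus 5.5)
The plan is to follow the proof of \autoref{thm:MFLD_L2:localL2:gen_full} species by species, with the linear monotonicity condition \eqref{eq:MFG:assum_kIJ} playing the role of \eqref{eq:MFLD_L2:assum_F''nu}. Set $f^I_t = \frac{\d\mu^I_t}{\d\nu^I}-1$, and let $L_I = \nabla_I^*\nabla_I$ with $\nabla_I^* = -\frac{1}{\nu^I}\nabla\cdot(\nu^I\,\bullet)$. Let $K_{IJ}: \LLL^2_{\nu^J}\to\LLL^2_{\nu^I}$ be the integral operator with kernel $k_{IJ}$. Using the stationarity condition $V_I[\ol\nu] = -\tau\log\nu^I + \cst$ and the decomposition of $V_I$ in the statement, the dynamics becomes
\begin{equation*}
	\partial_t f^I_t = -\tau L_I f^I_t - L_I \sum_J K_{IJ} f^J_t - \nabla_I^*\Big( f^I_t \nabla \sum_J K_{IJ} f^J_t\Big) - \nabla_I^*\big((f^I_t+1)\nabla R_I[\ol\mu_t]\big).
\end{equation*}
This is the analogue of \eqref{eq:MFLD_L2:localL2:MFLD_PDE_L2} and of \eqref{eq:MFL-DA:ftgt}.

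Next, define $z_t = \sum_I \norm{f^I_t}^2_{H^{-1}_{\nu^I}}$, $a_t = \sum_I \norm{f^I_t}^2_{\nu^I}$ and $b_t = \sum_I\norm{\nabla f^I_t}^2_{\nu^I}$, with $c = c_{\PI}$. Differentiating $z_t$, the $L_I^{-1}$ factors cancel the leading $L_I$'s. This gives $-2\tau a_t - 2\sum_{I,J}\innerprod{f^I}{K_{IJ}f^J}_{\nu^I}$, and \eqref{eq:MFG:assum_kIJ} bounds it by $-2(\tau-\tau_0)a_t$. The two error terms are handled exactly as $\mathrm{err}_1,\mathrm{err}_2$ were. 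For the first, use Cauchy--Schwarz in $H^{-1}_{\nu^I}$ and the fact that $\nabla_I L_I^{-1}\nabla_I^*$ is an orthogonal projector. Then \autoref{lm:MFLD_L2:localL2:estim_nablaKf_infty}, applied to each cross-kernel $k_{IJ}$ (its proof never used $x,y$ living in the same space), gives $\sup\norm{\nabla K_{IJ}f^J}\le M_{11}\norm{f^J}_{H^{-1}_{\nu^J}}$. For the second, I need the multi-species analogue of \autoref{lm:MFLD_L2:localL2:estim_nablaR'_infty}:
\begin{equation*}
	\sup\norm{\nabla R_I[\ol\mu]} \le \tfrac12\Big(\sum_J M_{12}^{IJ}W_2^2(\mu^J,\nu^J) + \sum_{J,K}M_{111}^{IJK}W_2(\mu^J,\nu^J)W_2(\mu^K,\nu^K)\Big).
\end{equation*}
I would prove it by interpolating all species simultaneously along their Brenier maps and doing a second-order Taylor expansion in $s$. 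Combined with $W_2^2\le 2c^{-1}\chi^2$, this is $\lesssim N(M_{12}+M_{111})c^{-1}a_t$.

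Summing over $I$ with Cauchy--Schwarz over indices, at the cost of factors of $N$ or $\sqrt N$ in the constant $M$, should give exactly the system \eqref{eq:MFLD_L2:localL2:zab_system_gen}. It is
\begin{equation*}
	\dot z_t \le -2\ol\tau a_t + 2M z_t\sqrt{a_t} + 2M\sqrt{z_t}\,a_t(1+\sqrt{a_t}),\qquad \dot a_t \le -2\tau b_t + 2\sqrt{b_t}\big(M\sqrt{z_t}+M a_t\big)(1+\sqrt{a_t}),
\end{equation*}
together with $cz_t\le a_t\le c^{-1}b_t$. The rough bound for $\dot a_t$ comes from $\innerprod{f^I}{\partial_t f^I}$: one integration by parts, then the same two lemmas. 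From this system, the remainder of the proof of \autoref{thm:MFLD_L2:localL2:gen_full} is purely scalar and applies verbatim. Take $W_t = z_t+\gamma a_t$ with the same $\gamma,\delta$ and the same threshold $\olW_0$. This yields contraction of $W_t$ at rate $2(\tau-\tau_0)c(1-\eps)$, and hence the $\chi^2$ bound with $C = 1+O(M^2/(\tau\ol\tau c^2\eps^2))$, which is polynomial in the listed quantities.

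The main obstacle is the bookkeeping of the cross-species error terms, particularly the $R_I$ estimate. One must check that the second-order Taylor remainder along a joint interpolation only involves the $M_{12}$ and $M_{111}$ quantities as defined. First variations $\delta R_I/\delta\mu^J$ vanish up to constants at $\ol\nu$ for every $J$, and that gives this. Mixed second variations produce products $W_2(\mu^J,\nu^J)W_2(\mu^K,\nu^K)$, which are bounded by $\frac12\sum_J W_2^2$. The other delicate point is ensuring that the index sums in the $\mathrm{err}_1$ terms collapse to $z_t\sqrt{a_t}$ with only polynomial dependence on $N$. I would handle this with $\sum_{I,J}\norm{f^I}_{H^{-1}}\norm{f^I}\norm{f^J}_{H^{-1}} \le \sqrt{N}\,z_t\sqrt{a_t}$ by Cauchy--Schwarz.
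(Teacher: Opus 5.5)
Your proposal is correct and matches the paper's proof essentially step for step. You use the same $\LLL^2_{\nu^I}$ reformulation and the same summed $H^{-1}$ Lyapunov functional, control the error terms with the cross-kernel version of the $\sup\norm{\nabla K f}$ lemma and a joint Brenier-interpolation Taylor bound on $\nabla R_I$ (the paper's version has constant $\frac12(M_{12}+NM_{111})$), and reduce via Cauchy--Schwarz in $\RR^N$ to the scalar system \eqref{eq:MFLD_L2:localL2:zab_system_gen} with $M$ inflated by a factor of order $N$, after which the MFLD argument applies verbatim.
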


In the case where the $V_I$ are linear in $\ol\mu$, we have the following more explicit result.
An example is the polymatrix continuous game setting considered in \cite{lu2025convergence}, as discussed in the next subsection.

\begin{theorem} \label{thm:MFG:quadr}
	Under the setting of the theorem above, if additionally $R_I = 0$ for all $I$, then the constants can be taken equal to
	$r_0 = \frac{(\tau-\tau_0)^2 c_{\PI}^2 \eps^2}{8 N^2 M_{11}^2}$ and
	$C = 1 + \frac{2 N^2 M_{11}^2}{\tau (\tau-\tau_0) c_{\PI}^2 \eps^2}$.
\end{theorem}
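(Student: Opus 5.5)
The plan is to reduce \autoref{thm:MFG:quadr} to the scalar system \eqref{eq:MFLD_L2:localL2:zab_system_quad}, exactly as in the proofs of \autoref{thm:MFLD_L2:localL2:quad} and \autoref{thm:MFL-DA:localL2}.

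\textbf{Step 1: rewrite the dynamics.} Set $f^I_t = \frac{\d\mu^I_t}{\d\nu^I}-1$, $L_I=\nabla_I^*\nabla_I$ on $\LLL^2_{\nu^I}$, and $(K_{IJ}g)(z^I)=\int k_{IJ}(z^I,z^J)g(z^J)\,\d\nu^J(z^J)$. Since $R_I=0$, stationarity gives $V_I[\ol\mu]=-\tau\log\nu^I+\sum_J K_{IJ}f^J+\cst$. The same computation as in \eqref{eq:MFL-DA:ftgt} then yields
\[
\partial_t f^I_t=-\tau L_I f^I_t - L_I \textstyle\sum_J K_{IJ}f^J_t - \nabla_I^*\big(f^I_t\,\nabla \textstyle\sum_J K_{IJ}f^J_t\big).
\]

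\textbf{Step 2: the $H^{-1}$ quantity.} Let $Z_t=\sum_I\norm{f^I_t}_{H^{-1}_I}^2$, $A_t=\sum_I\norm{f^I_t}_{\nu^I}^2$ and $B_t=\sum_I\norm{\nabla f^I_t}_{\nu^I}^2$. Differentiating $Z_t$, the $L_I$ prefactors cancel, and \eqref{eq:MFG:assum_kIJ} bounds the main part by $-2\tau A_t-2\sum_{I,J}\innerprod{f^I}{K_{IJ}f^J}\le -2(\tau-\tau_0)A_t$. This replaces the exact cancellation used for MFL-DA.

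For the error terms, apply Cauchy--Schwarz in $H^{-1}_I$, use that $\nabla_I L_I^{-1}\nabla_I^*$ is a projector, and use \autoref{lm:MFLD_L2:localL2:estim_nablaKf_infty}, which gives $\sup\norm{\nabla K_{IJ}f^J}\le M_{11}\norm{f^J}_{H^{-1}_J}$. The error is then at most
\[
2M_{11}\sum_I\norm{f^I}_{H^{-1}_I}\norm{f^I}_{\nu^I}\sum_J\norm{f^J}_{H^{-1}_J}.
\]
By Cauchy--Schwarz, $\sum_J\norm{f^J}_{H^{-1}_J}\le\sqrt N\sqrt{Z}$ and $\sum_I\norm{f^I}_{H^{-1}}\norm{f^I}_\nu\le\sqrt{Z}\sqrt{A}$. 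The $\dot Z$ bound therefore holds with $M=\sqrt N M_{11}$, or crudely $NM_{11}$.

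\textbf{Step 3: the $\LLL^2$ quantity.} For $\dot A_t$, argue as in the MFL-DA proof:
\[
\dot A_t\le-2\tau B_t+2M_{11}\sum_I\norm{\nabla f^I}(1+\norm{f^I})\sum_J\norm{f^J}_{H^{-1}_J}\le -2\tau B_t+2NM_{11}\sqrt{B_t}\sqrt{Z_t}(1+\sqrt{A_t}).
\]
Hence, with $M=NM_{11}$, $\ol\tau=\tau-\tau_0$ and $c=c_{\PI}$, we recover exactly system \eqref{eq:MFLD_L2:localL2:zab_system_quad}, together with $cZ\le A\le c^{-1}B$.

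\textbf{Step 4: conclude.} Invoke the argument of \autoref{thm:MFLD_L2:localL2:quad}. If $A_0\le\ol\tau^2c^2\eps^2/(4M^2)$, then $Z_t$ contracts, and the defective Gr\"onwall step gives $A_t\le(1+M^2/(\tau\ol\tau c^2\eps^2))e^{-2\ol\tau c(1-\eps)t}A_0$.

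The only remaining obstacle is the constants. The stated $r_0$ has $8N^2$ and $C$ has $2N^2$, which suggests an $M^2$ of $2N^2M_{11}^2$ coming from a looser inequality $(\sum_J a_J)^2\le N\sum a_J^2$ used twice. I would track which Cauchy--Schwarz choices produce exactly $2N^2M_{11}^2$. If needed I would take $M=\sqrt2 NM_{11}$, which reproduces both stated constants, since $M\ge NM_{11}$ only weakens the hypotheses used.
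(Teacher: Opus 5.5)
Your proposal is correct and follows the paper's proof: the paper also reduces to the system \eqref{eq:MFLD_L2:localL2:zab_system_quad} for $Z_t, A_t, B_t$ and reuses the argument of \autoref{thm:MFLD_L2:localL2:quad}. The only difference is in the $\dot A_t$ bound, where the paper uses $\sum_I \sqrt{b^I_t}\,(1+\sqrt{a^I_t}) \le \sqrt{2}\sqrt{B_t}\,(\sqrt{N}+\sqrt{A_t})$, giving the effective constant $\sqrt{2}\,N M_{11}$ and exactly the stated $r_0$ and $C$; your sharper constant $N M_{11}$ yields a larger radius and smaller prefactor, which already implies the statement.
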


\subsection{A sufficient condition: linear monotonicity} \label{subsec:MFG:examples}

To support our assumption \eqref{eq:MFG:assum_kIJ} on the $k_{IJ}$, we now describe a sufficient condition.
When it is satisfied,
we say the dynamics \eqref{eq:MFG:dyn_mu} is \emph{linearly monotone}.
This is the setting considered in the simplified statement of our result in the introduction, \autoref{thm:intro:MFG}.

\begin{proposition} \label{prop:MFG:linearlymonotone}
	In the setup of the dynamics \eqref{eq:MFG:dyn_mu}, suppose the potentials $(V_I)_I$ are such that
	\begin{equation} \label{eq:MFG:linearlymonotone}
		\forall \, \ol\mu, \, \ol\mu' \in \ol\PPP_2,~~
		\sum_I \int_{\RR^{d_I}} 
		\left( V_I[\ol\mu] - V_I[\ol\mu'] \right) ~ \d(\mu^I - \mu^{\prime\, I}) \geq 0.
	\end{equation}
	Then for any $\ol\mu \in \ol\PPP_2$
	and any bounded and compactly supported $s^1, ..., s^N$ such that 
	$\int_{\RR^{d_1}} \d s^1 = ... = \int_{\RR^{d_N}} \d s^N = 0$,
	\begin{equation} \label{eq:MFG:linearlymonotone_2nd_order}
		\sum_{I,J}
		\iint_{\RR^{d_I} \times \RR^{d_J}}
		\frac{\delta V_I[\ol\mu](z^I)}{\delta \mu^J(z^J)} ~
		\d s^I(z^I) \d s^J(z^J)
		\geq 0.
	\end{equation}
	In particular, the assumption \eqref{eq:MFG:assum_kIJ} in \autoref{assum:MFG_B} is automatically satisfied with $\tau_0=0$
	(provided that the $\nabla_{z^I} \nabla_{z^J} \,k_{IJ}$ are uniformly bounded).
\end{proposition}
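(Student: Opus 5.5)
The plan is to derive the second-order condition \eqref{eq:MFG:linearlymonotone_2nd_order} from the first-order monotonicity \eqref{eq:MFG:linearlymonotone} by a perturbation and limiting argument, as in finite dimension where monotonicity of a map implies that the symmetric part of its Jacobian is positive semi-definite. Fix $\ol\mu \in \ol\PPP_2$ and bounded, compactly supported signed measures $s^1, \dots, s^N$ with zero total mass. The measures $\mu^I + \eps s^I$ need not be nonnegative, so we first interpolate with a reference measure. Set $\rho^I = \mu^I + |s^I|$, normalized to a probability measure up to a constant; it still has finite second moment because $s^I$ is compactly supported. Then $\mu^{I}_\eps \coloneqq (1-\eps)\mu^I + \eps\, c\,(\mu^I + s^I + |s^I|)$ is a probability measure in $\PPP_2$ for a suitable normalization. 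A cleaner choice is to work directly with base points $\mu^I_\eps = \mu^I + \eps s^I$ whenever $s^I \ll \mu^I$ with bounded density. Otherwise, replace the base point by $\tilde\mu^I = \tfrac12(\mu^I + \sigma^I)$, where $\sigma^I$ is a probability measure dominating $|s^I|$, prove the inequality at $\tilde\mu$, and pass back to $\mu$ at the end.

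The core step is as follows. Take two points $\ol\mu_\eps = \ol\mu + \eps \ol s$ and $\ol\mu' = \ol\mu$, both admissible in $\ol\PPP_2$. Then \eqref{eq:MFG:linearlymonotone} gives
\begin{equation*}
	\sum_I \int \left( V_I[\ol\mu + \eps \ol s] - V_I[\ol\mu] \right) \d (\eps s^I) \geq 0 .
\end{equation*}
Divide by $\eps^2$. By the definition of the first variation of $V_I$ with respect to $\mu^J$, we have $V_I[\ol\mu+\eps\ol s](z^I) - V_I[\ol\mu](z^I) = \eps \sum_J \int \frac{\delta V_I[\ol\mu](z^I)}{\delta \mu^J(z^J)}\, \d s^J(z^J) + o(\eps)$. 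Integrating against $s^I$ and letting $\eps \to 0$ yields \eqref{eq:MFG:linearlymonotone_2nd_order} at $\ol\mu$. This passage to the limit is the main obstacle, because we need the remainder $o(\eps)$ to be uniform enough to be integrated against $s^I$. I would write the difference as $\int_0^\eps \sum_J \int \frac{\delta V_I[\ol\mu + \theta \ol s]}{\delta\mu^J}\, \d s^J \,\d\theta$. Continuity of the first variations in the measure argument, locally uniformly on the compact supports of the $s^I$ (which follows from the assumed $\CCC^3$ regularity of first and second variations), then justifies dominated convergence. If base points have to be shifted to ensure positivity, the same continuity lets us send the auxiliary mixing parameter to zero as well.

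For the last claim, take $\ol\mu = \ol\nu$ and $f^I \in \LLL^2_{\nu^I} \cap \{1\}^\perp$. We approximate $f^I \nu^I$ by bounded, compactly supported, mean-zero signed measures $s^I_n = f^I_n \nu^I$ with $f^I_n \in \CCC^\infty_c$ (recentered) and $f^I_n \to f^I$ in $\LLL^2_{\nu^I}$. In this case positivity is automatic for small $\eps$, since $s^I_n \ll \nu^I$ with bounded density. The quadratic form in \eqref{eq:MFG:assum_kIJ} is continuous in $\LLL^2$. This follows from \autoref{lm:MFLD_L2:localL2:estim_nablaKf_infty} applied to each $k_{IJ}$, using the uniform bound on $\nabla_{z^I}\nabla_{z^J} k_{IJ}$ and the Poincaré inequalities, exactly as in the proof of \autoref{prop:MFLD_L2:localL2:tau0=tau_unstable}. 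Passing to the limit $n \to \infty$ gives \eqref{eq:MFG:assum_kIJ} with $\tau_0 = 0$.
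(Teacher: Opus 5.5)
Your argument is the paper's proof: perturb to $\ol\mu+\eps\ol s$, apply \eqref{eq:MFG:linearlymonotone} against $\ol\mu$, divide by $\eps^2$ and pass to the limit through the first-variation expansion. You then obtain \eqref{eq:MFG:assum_kIJ} by taking $s^I=f^I\nu^I$ with bounded $f^I$ and running a density argument, based on the $\LLL^2$-continuity of the quadratic form, exactly as in the proof of \autoref{prop:MFLD_L2:localL2:tau0=tau_unstable}. You are more careful than the paper, which simply asserts $\ol\mu+\delta\ol s\in\ol\PPP_2$ (this can fail when $s^I$ is not dominated by $\mu^I$, though it holds in the application at $\ol\nu$); but removing your mixing parameter needs continuity of the second variation in the measure argument, an extra hypothesis that does not follow from spatial $\CCC^3$ regularity, and ``recentering'' $f^I_n$ should subtract a multiple of a fixed compactly supported bump of $\nu^I$-mean one rather than a constant, so that compact support is preserved.
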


\begin{proof}
	Let $\ol\mu \in \ol\PPP_2$ and $\ols = (s^1, ..., s^N)$ with $s^J$ bounded and compactly supported and $\int_{\RR^{d_J}} \d s^J = 0$ for all $J$.
	Denote $\ol\mu_\delta = \ol\mu + \delta \ol s \in \ol\PPP_2$ for any $\delta \in \RR$.
	Then by definition of the first variation,
	\begin{equation*}
		\frac{1}{\delta^2} \sum_I \int_{\RR^{d_I}} 
		\left( V_I[\ol\mu_\delta] - V_I[\ol\mu] \right) ~ \d(\mu^I_\delta - \mu^I)
		~\to~ 
		\sum_{I,J} \iint_{\RR^{d_I} \times \RR^{d_J}}
		\frac{\delta V_I[\ol\mu](z^I)}{\delta \mu^J(z^J)} ~
		\d s^I(z^I) \d s^J(z^J)
		~~~~\text{as}~~~~
		\delta \to 0.
	\end{equation*}
	Hence the announced implication.
	
	In particular,
	\eqref{eq:MFG:assum_kIJ} can then be verified for all $f^1, ..., f^N$ with $f^J \in \LLL^2_{\nu^J} \cap \{1\}^\perp$ for each $J$, by considering $s^J = f^J \nu^J$ and by a density argument to remove the boundedness and compact support requirements.
	Indeed the left-hand side of \eqref{eq:MFG:assum_kIJ} is continuous in the $f^J$ w.r.t.\ the $\LLL^2$ norms, by a similar argument as in the proof of \autoref{prop:MFLD_L2:localL2:tau0=tau_unstable}, provided that the $\nabla_{z^I} \nabla_{z^J} \,k_{IJ}$ are uniformly bounded.
\end{proof}

\begin{remark}
	The converse implication \eqref{eq:MFG:linearlymonotone_2nd_order} $\implies$ \eqref{eq:MFG:linearlymonotone} is also true under appropriate regularity of the $V_I$. This can be shown by applying the mean-value theorem to 
	$g: \delta \mapsto \sum_I \int_{\RR^{d_I}} 
	\big( V_I[\ol\mu + \delta (\ol\mu'-\ol\mu)] \allowbreak - V_I[\ol\mu] \big) \allowbreak ~ \d(\mu^{\prime\, I} - \mu^I)$.
	Indeed $g(1)$ equals the left-hand side of \eqref{eq:MFG:linearlymonotone} and $g(0) = 0$, so assuming that $g$ is differentiable, there exists $0<\xi<1$ such that $g(1) = g'(\xi)$. 
	Now by formal computation, one can expect $g'(\xi)$ to be precisely equal to the left-hand side of \eqref{eq:MFG:linearlymonotone_2nd_order} 
	with $s^I = \mu^{\prime\, I} - \mu^I$
	and with ``$\ol\mu$'' replaced by $\ol\mu + \xi (\ol\mu'-\ol\mu)$.
	Determining the appropriate assumptions on the $V_I$ to justify this reasoning rigorously is left to future work.
\end{remark}

\begin{remark}
	For $N=1$, linear monotonicity is also known as Lasry-Lions monotonicity in the mean-field game theory literature.
	Its connections to displacement monotonicity and other related conditions were investigated by \cite{gangbo2022global,gangbo2022mean,meszaros2024mean,graber2023monotonicity}.
\end{remark}

MFLD with a globally linearly convex $F$ analyzed in \autoref{sec:MFLD_L2}, as well as the MFL-DA dynamics from \autoref{sec:MFL-DA}, are examples of instances of \eqref{eq:MFG:dyn_mu} that satisfy linear monotonicity, with $N=1$ and $N=2$ respectively.
Next we describe another example that appeared in the literature, with arbitrary $N$.

\paragraph{$N$-player pairwise-zero-sum polymatrix continuous games.}
Consider the case where the $V_I$ derive from functionals $F_I: \ol\PPP_2 \to \RR$ via
\begin{equation*}
	\forall I,~
	\forall \ol\mu \in \ol\PPP_2,~
	\forall z^I \in \RR^{d_I},~~
	V_I[\ol\mu](z^I) = \frac{\delta F_I(\ol\mu)}{\delta \mu^I(z^I)}.
\end{equation*}
In a game theory context, $F_I$ can be interpreted as the loss of the $I$-th player in a $N$-player game with mixed strategy sets $\PPP_2(\RR^{d_1}), ..., \PPP_2(\RR^{d_N})$.
Further suppose there exist $k_{IJ}: \RR^{d_I} \times \RR^{d_J} \to \RR$ such that
\begin{equation*}
	\forall I,~
	F_I(\ol\mu) = \sum_J \iint_{\RR^{d_I} \times \RR^{d_J}} k_{IJ}(z^I, z^J) \, \d\mu^I(z^I) \d\mu^J(z^J),
\end{equation*}
and that it holds
\begin{equation*}
	\forall I,~
	k_{II} = 0
	~~~~\text{and}~~~~
	\forall I \neq J,~
	k_{IJ}(z^I, z^J) = -k_{JI}(z^J, z^I).
\end{equation*}
That is, following the terminology of \cite{cai2016zerosum}, we are presented with a polymatrix continuous game which is pairwisely zero-sum.
In this context, \cite{lu2025convergence} analyzed the global convergence of a variant of \eqref{eq:MFG:dyn_mu} that uses time-averaged and exponentially discounted gradients (over tori instead of Euclidean spaces).

In this setting, the linear monotonicity condition \eqref{eq:MFG:linearlymonotone} holds, as
\begin{align*}
	\forall \, \ol\mu, \, \ol\mu',~~
	&\sum_I \int_{\RR^{d_I}}
	\left( 
		\sum_J \int_{\RR^{d_J}} k(z^I, z^J) ~ \d(\mu^J - \mu^{\prime\, J})(z^J)
	\right)
	\d(\mu^I - \mu^{\prime\, I})(z^I) \\
	&= \sum_{I,J} \int_{\RR^{d_I} \times \RR^{d_J}}
	k(z^I, z^J)
	\, \d(\mu^I - \mu^{\prime\, I})(z^I)
	\, \d(\mu^J - \mu^{\prime\, J})(z^J)
	= 0.
\end{align*}
Moreover, for the dynamics over tori, the existence of an equilibrium tuple is proved in \cite[Prop.~2.2]{lu2025convergence}, and the PI assumption is ensured by the boundedness of the $k_{IJ}$ and by the Holley-Stroock criterion (see \cite[Lemma~2.1]{lu2023two}), so that our \autoref{assum:MFG_B} is satisfied.
Also note that $V_I[\ol\mu](z^I) = \frac{\delta F_I(\ol\mu)}{\delta \mu^I(z^I)} = \sum_J \int_{\RR^{d_J}} k_{IJ}(z^I, z^J) \,\d\mu^J(z^J)$ is linear in $\ol\mu$ for each $I$, so the mappings $R_I$ in \autoref{thm:MFG:gen} are identically zero, and so our finer-grained result \autoref{thm:MFG:quadr} applies.

\subsection{Proof of \autoref{thm:MFG:gen} and \autoref{thm:MFG:quadr}} \label{subsec:MFG:pfs}

The proofs of \autoref{thm:MFG:gen} and \autoref{thm:MFG:quadr} follow essentially the same steps as the corresponding results for MFLD, \autoref{thm:MFLD_L2:localL2:gen_simple} and \autoref{thm:MFLD_L2:localL2:quad}.

Introduce the (unbounded) operators
$\nabla_I^* = -\frac{1}{\nu^I} \nabla \cdot (\nu^I \bullet)$ from $\bm\LLL^2_{\nu^I}$ to $\LLL^2_{\nu^I}$ and
$L_I = -\frac{1}{\nu^I} \nabla \cdot (\nu^I \nabla \bullet) = \nabla^*_I \nabla$ over $\LLL^2_{\nu^I}$,
and denote $\norm{f^I}_{H^{-1}_I}^2 = \innerprod{f^I}{L_I^{-1} f^I}_{\nu^I}$ for all $f^I \in \LLL^2_{\nu^I} \cap \{1\}^\perp$.

\begin{proof}
	By definition of $R_I$, the dynamics \eqref{eq:MFG:dyn_mu} rewrites, in terms of 
	$f_t^I = \frac{\d\mu_t^I}{\d\nu^I}-1 \in \LLL^2_{\nu^I} \cap \{1\}^\perp$,
	\begin{align*}
		\partial_t \mu_t^I &= \nabla \cdot (\mu_t^I \nabla V_I[\ol\mu_t]) + \tau \Delta \mu_t^I \\
		&= \tau \nabla \cdot \left( \mu_t^I \nabla \log \frac{\mu_t^I}{\nu^I} \right)
		+ \sum_J \nabla \cdot \left( \mu_t^I \nabla \int_{\RR^{d_J}} k_{IJ}(\cdot, z^J) \,\d(\mu_t^J - \nu^J) \right)
		+ \nabla \cdot (\mu_t^I \nabla R_I[\ol\mu)t]) \\
		\partial_t f_t^I &= -\tau L_I f_t^I
		- \sum_J \nabla_I^* \left( (f_t^I+1) \nabla K_{IJ} f_t^J \right)
		- \nabla_I^* \left( (f_t^I+1) \nabla R_I[\ol\mu_t] \right) \\
		&= -\tau L_I f_t^I
		- \sum_J L_I K_{IJ} f_t^J
		- \sum_J \nabla_I^* \left( f_t^I \nabla K_{IJ} f_t^J \right)
		- \nabla_I^* \left( (f_t^I+1) \nabla R_I[\ol\mu_t] \right)
	\end{align*}
	where $K_{IJ}: \LLL^2_{\nu^J} \to \LLL^2_{\nu^I}$ is the operator defined by
	$(K_{IJ} f^J)(z^I) = \int_{\RR^{d_J}} k_{IJ}(z^J, z^J) \, f^J(z^J) \d\nu^J(z^J)$.
	Consequently,
	\begin{multline*}
		\frac{d}{dt} \norm{f_t^I}_{H^{-1}_I}^2
		= 2 \innerprod{f_t^I}{L_I^{-1} \partial_t f_t^I}_{\nu^I}
		= -2 \tau \norm{f_t^I}_{\nu^I}^2
		- 2 \sum_J \innerprod{f_t^I}{K_{IJ} f_t^J}_{\nu^I} \\
		- 2 \sum_J \innerprod{f_t^I}{L_I^{-1} \nabla_I^* \left( f_t^I \nabla K_{IJ} f_t^J \right)}_{\nu^I}
		- 2 \innerprod{f_t^I}{L_I^{-1} \nabla_I^* \left( (f_t^I+1) \nabla R_I[\ol\mu_t] \right)}_{\nu^I}.
	\end{multline*}
	For the third term, note that for each $J$, by a Cauchy-Schwarz inequality w.r.t.\ $\innerprod{\cdot}{L_I^{-1} \cdot}_{\nu^I}$,
	\begin{align*}
		\abs{\innerprod{f_t^I}{L_I^{-1} \nabla_I^* \left( f_t^I \nabla K_{IJ} f_t^J \right)}}
		&\leq \norm{f_t^I}_{H^{-1}_I}
		\cdot \sqrt{\innerprod{f^I_t \nabla K_{IJ} f_t^J}{\nabla L^{-1} \nabla^*_I \left( f^I_t \nabla K_{IJ} f_t^J \right)}_{\nu^I}} \\
		&\leq \norm{f_t^I}_{H^{-1}_I}
		\cdot \norm{f_t^I \nabla K_{IJ} f_t^J}_{\nu^I} \\
		&\leq \norm{f_t^I}_{H^{-1}_I}
		\cdot \norm{f_t^I}_{\nu^I} \cdot \sup_{\RR^{d_I}} \norm{\nabla K_{IJ} f_t^J} \\
		&\leq \norm{f_t^I}_{H^{-1}_I}
		\cdot \norm{f_t^I}_{\nu^I} \cdot M_{11} \norm{f_t^J}_{H^{-1}_J}
	\end{align*}
	where in the second inequality we used that $0 \preceq \nabla_I L_I^{-1} \nabla^*_I \preceq \id_{\bm\LLL^2_{\nu^I}}$ as the orthogonal projector in $\bm\LLL^2_{\nu^I}$ onto the subspace consisting of gradient fields,
	and the last inequality follows from \autoref{lm:MFLD_L2:localL2:estim_nablaKf_infty}.
	Likewise the fourth term can be bounded by noting that
	\begin{align*}
		\abs{\innerprod{f_t^I}{L_I^{-1} \nabla_I^* \left( (f_t^I+1) \nabla R_I[\ol\mu_t] \right)}_{\nu^I}}
		&\leq \norm{f_t^I}_{H^{-1}_I}
		\cdot \norm{(f_t^I+1) \nabla R_I[\ol\mu_t]}_{\nu^I} \\
		&\leq \norm{f_t^I}_{H^{-1}_I}
		\cdot (1 + \norm{f_t^I}_{\nu^I})
		\cdot \sup_{\RR^{d_I}} \norm{\nabla R_I[\ol\mu_t]} \\
		&\leq \norm{f_t^I}_{H^{-1}_I}
		\cdot (1 + \norm{f_t^I}_{\nu^I})
		\cdot \frac12 (M_{12} + N M_{111}) \sum_J W_2^2(\mu^J_t, \nu^J_t) \\
		&\leq \norm{f_t^I}_{H^{-1}_I}
		\cdot (1 + \norm{f_t^I}_{\nu^I})
		\cdot M_R \sum_J \chisq{\mu^J_t}{\nu^J_t}
	\end{align*}
	where $M_R = (M_{12} + N M_{111}) c_{\PI}^{-1}$,
	by \autoref{lm:MFG:pfs:estim_nablaR_infty} below
	and by the estimate from \cite{liu2020poincare}.
	By summing these inequalities,
	we thus have
	\begin{multline*}
		\frac{d}{dt} \sum_I \norm{f_t^I}_{H^{-1}_I}^2
		\leq -2 \tau \sum_I \norm{f_t^I}_{\nu^I}^2 
		- 2 \sum_{I,J} \innerprod{f_t^I}{K_{IJ} f_t^J}_{\nu^I} \\
		 + 2 \sum_{I,J} \norm{f_t^I}_{H^{-1}_I}
		\cdot \norm{f_t^I}_{\nu^I} \cdot M_{11} \norm{f_t}_{H^{-1}_J}
		+ 2 \sum_I \norm{f_t^I}_{H^{-1}_I}
		\cdot (1 + \norm{f_t^I}_{\nu^I})
		\cdot M_R \sum_J \norm{f^J_t}_{\nu^J}^2
	\end{multline*}
	and the first two terms are bounded by 
	$-2 (\tau-\tau_0) \sum_I \norm{f_t^I}_{\nu^I}^2$
	by assumption \eqref{eq:MFG:assum_kIJ}.
	
	We also have the rough estimates for the $\LLL^2$ norms
	\begin{align*}
		\frac{d}{dt} \norm{f_t^I}_{\nu^I}^2
		&= -2\tau \norm{\nabla f_t^I}_{\nu^I}^2
		- 2 \sum_J \innerprod{\nabla f_t^I}{(f_t^I+1) \nabla K_{IJ} f_t^J}_{\nu^I}
		- 2 \innerprod{\nabla f_t^I}{(f_t^I+1) \nabla R_I[\ol\mu_t]}_{\nu^I} \\
		&\leq -2\tau \norm{\nabla f_t^I}_{\nu^I}^2
		+ 2 \norm{\nabla f_t^I}_{\nu^I} \cdot (1 + \norm{f_t^I}_{\nu^I}) \cdot 
		\left[ \sum_J \sup_{\RR^{d_I}} \norm{\nabla K_{IJ} f_t^J} + \sup_{\RR^{d_I}} \norm{\nabla R_I[\ol\mu_t]} \right] \\
		&\leq -2\tau \norm{\nabla f_t^I}_{\nu^I}^2
		+ 2 \norm{\nabla f_t^I}_{\nu^I} \cdot (1 + \norm{f_t^I}_{\nu^I}) \cdot 
		\sum_J \left( M_{11} \norm{f_t^J}_{H^{-1}_J} + M_R \norm{f_t^J}_{\nu^J}^2 \right).
	\end{align*}
	
	Denoting for concision
	\begin{align} \label{eq:MFG:pfs:zabZAB}
		z_t^I &= \norm{f_t^I}_{H^{-1}_I}^2,
		&
		a_t^I &= \norm{f_t^I}_{\nu^I}^2,
		&
		b_t^I &= \norm{\nabla f_t^I}_{\nu^I}^2, \\
		\text{and}~~~~
		Z_t &= \sum_I z_t^I,
		&
		A_t &= \sum_I a_t^I,
		&
		B_t &= \sum_I b_t^I,
	\end{align}
	the two above estimates rewrite
	\begin{align*}
		\dot{Z}_t
		&\leq -2(\tau-\tau_0) A_t
		+ 2 M_{11} \left( \sum_I \sqrt{z_t^I ~ a_t^I} \right) \left( \sum_J \sqrt{z_t^J} \right)
		+ 2 M_R \left( \sum_I \sqrt{z_t^I} \left(1 + \sqrt{a_t^I} \right) \right) A_t \\
		\dot{A}_t
		&\leq -2\tau B_t
		+ 2 \left( \sum_I \sqrt{b_t^I} \left( 1 + \sqrt{a_t^I} \right) \right)
		\left( M_{11} \sum_J \sqrt{z_t^J} + M_R A_t \right).
	\end{align*}
	Using Cauchy-Schwarz inequalities in $\RR^N$, we find that
	\begin{align*}
		\dot{Z}_t
		&\leq -2(\tau-\tau_0) A_t
		+ 2 M_{11} \sqrt{N} Z_t \sqrt{A_t} 
		+ 2 M_R \sqrt{Z_t} A_t \cdot \sqrt{2} (\sqrt{N} + \sqrt{A_t}) \\
		&\leq -2(\tau-\tau_0) A_t
		+ 2 \sqrt{N} M_{11} \cdot Z_t \sqrt{A_t} 
		+ 2 \sqrt{2N} M_R \cdot \sqrt{Z_t} \, A_t (1 + \sqrt{A_t}) \\
		\dot{A}_t
		&\leq -2\tau B_t + 2 \sqrt{B_t} \cdot \sqrt{2} (\sqrt{N} + \sqrt{A_t}) \cdot \left( M_{11} \sqrt{N} \sqrt{Z_t} + M_R A_t \right) \\
		&\leq -2\tau B_t + 2 \sqrt{B_t} (1 + \sqrt{A_t}) \left( \sqrt{2} \, N M_{11} \cdot \sqrt{Z_t} + \sqrt{2} \, N M_R \cdot A_t \right).
	\end{align*}
	This is the same system of inequalities as \eqref{eq:MFLD_L2:localL2:zab_system_gen} in the proof of \autoref{thm:MFLD_L2:localL2:gen_full},
	except that ``$M_{11}$'' is replaced by $\sqrt{2} N M_{11}$ and ``$M_R$'' by $\sqrt{2} N M_R$.
	We also still have that
	$c_{\PI} Z_t \leq A_t \leq c_{\PI}^{-1} B_t$.
	So we can follow the same steps and obtain the same conclusions as in that proof. 
	Namely we get that for any $0 < \eps \leq \frac18$,
	denoting $M = \sqrt{2} N (M_{11} \vee M_R)$
	and $\gamma = \frac{\tau (\tau-\tau_0) c_{\PI} \eps^2}{128 \, M^2}$,
	if $A_0 \leq (\gamma + 1/c_{\PI})^{-1} \cdot 
	\left( 2^{-20} (\tau-\tau_0)^4 c_{\PI}^3 \eps^4 M^{-4} \wedge 1 \right)$,
	then
	$A_t \leq C e^{-2(\tau-\tau_0) c_{\PI} (1-\eps) t} A_0$
	where $C = 1 + \frac{128 M^2}{\tau (\tau-\tau_0) c_{\PI}^2 \eps^2}$.
	This implies the statement of \autoref{thm:MFG:gen}.
\end{proof}

\begin{lemma} \label{lm:MFG:pfs:estim_nablaR_infty}
	Under the conditions of \autoref{thm:MFG:gen}, for any $\ol\mu \in \ol\PPP_2$ and any $I \in \{1, ..., N\}$,
	\begin{equation*}
		\sup_{\RR^{d_I}} \norm{\nabla R_I[\ol\mu]}
		\leq \frac12 (M_{12} + N M_{111}) \sum_J W_2^2(\mu^J, \nu^J).
	\end{equation*}
\end{lemma}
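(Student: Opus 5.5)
We adapt the proof of \autoref{lm:MFLD_L2:localL2:estim_nablaR'_infty} to the product space. For each $J$, Brenier's theorem gives an optimal map $T^J$ with $\mu^J = T^J_\sharp \nu^J$. Set $\Psi^J = T^J - \id$, $\mu^{J,s} = (\id + s\Psi^J)_\sharp \nu^J$ for $s\in[0,1]$, and $\ol\mu^s = (\mu^{1,s},\dots,\mu^{N,s})$. Then $\ol\mu^0 = \ol\nu$, $\ol\mu^1 = \ol\mu$, and $\partial_s \mu^{J,s} = -\nabla\cdot(\mu^{J,s}\Psi^{J,s})$ with $\Psi^{J,s} = \Psi^J\circ(\id+s\Psi^J)^{-1}$. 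Moreover $\int \norm{\Psi^{J,s}}^2 \d\mu^{J,s} = W_2^2(\mu^J,\nu^J)$ for every $s$. All species are interpolated simultaneously along the same parameter $s$.

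Fix $z^I$ and consider $\phi(s) = \nabla R_I[\ol\mu^s](z^I)$. By the chain rule for first variations along this curve,
\begin{equation*}
\phi'(s) = \nabla_{z^I}\sum_J \int \d\mu^{J,s}(z^J)\, \Psi^{J,s}(z^J)^\top \nabla_{z^J}\frac{\delta R_I[\ol\mu^s](z^I)}{\delta\mu^J(z^J)}.
\end{equation*}
Differentiating once more gives two kinds of terms. The first kind, from moving $\mu^{J,s}$ in the integral, is
\begin{equation*}
\sum_J \nabla_{z^I}\int \d\mu^{J,s}\,\Psi^{J,s\top}\nabla^2_{z^J}\frac{\delta R_I}{\delta\mu^J}\Psi^{J,s}.
\end{equation*}
The second kind, from differentiating the first variation in $\ol\mu^s$, is a double sum over $J,K$ of $\nabla_{z^I}\iint \Psi^{J,s\top}\nabla_{z^J}\nabla_{z^K}\frac{\delta^2 R_I}{\delta\mu^J\delta\mu^K}\Psi^{K,s}\,\d\mu^{J,s}\d\mu^{K,s}$. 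As in the single-species lemma, the $\Psi$-dependent derivative contributions combine so that only these two kinds remain, since the curves are geodesics and $\Psi^{J,s}$ is transported. Because $R_I[\ol\nu]$ and $\frac{\delta R_I[\ol\nu]}{\delta\nu^J}$ are constant, $\phi(0)=0$ and $\phi'(0)=0$. Taylor's formula with integral remainder then gives $\norm{\phi(1)} \le \tfrac12\sup_s\norm{\phi''(s)}$.

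The first kind of term is bounded by $M_{12}\sum_J W_2^2(\mu^J,\nu^J)$. Each summand of the second kind is bounded by
\begin{equation*}
M_{111}\Big(\int\norm{\Psi^{J,s}}\d\mu^{J,s}\Big)\Big(\int\norm{\Psi^{K,s}}\d\mu^{K,s}\Big)\le M_{111}W_2(\mu^J,\nu^J)W_2(\mu^K,\nu^K),
\end{equation*}
using Jensen. Summing over $J,K$ gives $M_{111}\big(\sum_J W_2(\mu^J,\nu^J)\big)^2 \le N M_{111}\sum_J W_2^2(\mu^J,\nu^J)$ by Cauchy--Schwarz in $\RR^N$, which is where the factor $N$ comes from. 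Combining yields $\tfrac12(M_{12}+NM_{111})\sum_J W_2^2(\mu^J,\nu^J)$.

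The main obstacle is justifying the second-derivative formula rigorously. This means differentiating under the integral and exchanging $\nabla_{z^I}$ with $s$-derivatives, using the uniform bounds $M_{12}$, $M_{111}$ together with regularity of the first and second variations. We treat this at the same formal level as in \autoref{lm:MFLD_L2:localL2:estim_nablaR'_infty}, noting that the only new feature is the bookkeeping over species.
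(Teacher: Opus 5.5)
Your proposal is correct and follows the paper's proof essentially step for step. Both arguments interpolate all species simultaneously along their Brenier displacement geodesics, and both kill the zeroth- and first-order Taylor terms using the constancy of $R_I[\cdot]$ and of its first variation at equilibrium. Both then bound the integral remainder by $\frac12 \sup_s \|\phi''(s)\|$, with $M_{12}$ controlling the single-sum term and $M_{111}$ plus Jensen and Cauchy--Schwarz in $\RR^N$ giving the factor $N M_{111}$.
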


\begin{proof}
	The proof is very similar to that of \autoref{lm:MFLD_L2:localL2:estim_nablaR'_infty}.
	
	Fix $I \in \{1, ..., N\}$.
	Since $\nu^J \in \PPP_2^{\AC}(\RR^{d_J})$ for each $J$, by Brenier's theorem, there exist optimal transport maps $T^J$ such that $\mu^J = T^J_\sharp \nu^J$.
	Let $\Psi^J = T^J - \id$ and $\mu^J_s = (\id + s \Psi^J)_\sharp \nu^J$ for all $0 \leq s \leq 1$, so that $\mu^J_0 = \nu^J$ and $\mu^J_1 = \mu^J$.
	Also denote $\Psi^J_s = \Psi^J \circ (\id + s \Psi^J)^{-1}$ for all $0 \leq s < 1$, so that $\partial_s \mu^J_s = -\nabla \cdot (\mu^J_s \Psi^J_s)$,
	and note that $\int \norm{\Psi^J_s}^2 \d\mu^J_s = W_2^2(\mu^J, \nu^J)$ for all $s$.
	
	Fix $z^I \in \RR^{d_I}$.
	By explicit computations, one has that
	\begin{align*}
		\frac{d}{ds} \nabla R_I[\ol\mu_s](z^I) 
		&= \nabla_{z^I} \sum_J \int_{\RR^{d_J}} \d\mu^J_s(z^J)~ \Psi^J_s(z^J)^\top \nabla_{z^J} \frac{\delta R_I[\ol\mu_s](z^I)}{\delta \mu^J(z^J)} \\
		\frac{d^2}{ds^2} \nabla R_I[\ol\mu_s](x)
		&= \nabla_{z^I} \sum_J \int_{\RR^{d_J}} \d\mu^J_s(z^J)~ \Psi^J_s(z^J)^\top \nabla_{z^J}^2 \frac{\delta R_I[\ol\mu_s](z^I)}{\delta \mu^J(z^J)} \, \Psi^J_s(z^J) \\
		& \hspace{-9pt} 
		+ \nabla_{z^I} \sum_{J,K} \iint_{\RR^{d_J} \times \RR^{d_K}} \d\mu^J_s(z^J) \d\mu^K_s(z^K)~ \Psi^J_s(z^J)^\top \nabla_{z^J} \nabla_{z^K} \frac{\delta^2 R_I[\ol\mu_s](z^I)}{\delta \mu^J(z^J) \delta \mu^K(z^K)} \, \Psi^K_s(z^K).
	\end{align*}
	In particular, for all $s \in [0,1)$,
	\begingroup \allowdisplaybreaks
	\begin{align*}
		\norm{ \frac{d^2}{ds^2} \nabla R_I[\ol\mu_s](z^I) }
		&\leq \sum_J \int_{\RR^{d_J}}
		\d\mu^J_s(z^J)~ \norm{\Psi^J_s(z^J)}^2
		\cdot \norm{ \nabla_{z^I} \nabla_{z^J}^2 \frac{\delta R_I[\ol\mu_s](z^I)}{\delta \mu^J(z^J)} }_{\mathrm{op}} \\*
		&~~~~ + \sum_{J,K} \iint_{\RR^{d_J} \times \RR^{d_K}} 
		\d\mu^J_s(z^J) \d\mu^K_s(z^K)~
		\norm{\Psi^J_s(z^J)} \cdot \norm{\Psi^K_s(z^K)} \\*
		&\hspace{18em}
		\cdot \norm{ \nabla_{z^I} \nabla_{z^J} \nabla_{z^K} \frac{\delta^2 R_I[\ol\mu_s](z^I)}{\delta \mu^J(z^J) \delta \mu^K(z^K)} }_{\mathrm{op}} \\
		&\leq M_{12} \sum_J W_2^2(\mu^J, \nu^J)
		+ M_{111} \sum_{J,K} W_2(\mu^J, \nu^J) W_2(\mu^K, \nu^K) \\
		&\leq (M_{12} + N M_{111}) \sum_J W_2^2(\mu^J, \nu^J).
	\end{align*}
	\endgroup
	So by a second-order Taylor expansion of $s \mapsto \nabla R[\ol\mu^s](z^I)$,
	since $R_I[\ol\nu]=\cst$ on $\RR^{d_I}$ and $\frac{\delta R_I[\ol\nu]}{\delta \nu^J} = \cst$ on $\RR^{d_I} \times \RR^{d_J}$,
	\begin{equation*}
		\norm{\nabla R_I[\ol\mu](z^I)}
		= \norm{0 + 0 + \int_0^1 \d s ~(1-s)
		\frac{d^2}{ds^2} \nabla R_I[\ol\mu_s](z^I)}
		\leq \frac12 (M_{12} + N M_{111}) \sum_J W_2^2(\mu^J, \nu^J),
	\end{equation*}
	as announced.
\end{proof}

\begin{proof}[Proof of \autoref{thm:MFG:quadr}]
	The beginning of the proof is identical to the one for the general case, specialized to $R = 0$.
	One can check that, denoting
	$z_t^I, a_t^I, b_t^I, Z_t, A_t, B_t$ as in \eqref{eq:MFG:pfs:zabZAB},
	we have
	\begin{align*}
		\dot{Z}_t
		&\leq -2(\tau-\tau_0) A_t
		+ 2 M_{11} \left( \sum_I \sqrt{z_t^I ~ a_t^I} \right) \left( \sum_J \sqrt{z_t^J} \right) \\
		\dot{A}_t
		&\leq -2\tau B_t
		+ 2 \left( \sum_I \sqrt{b_t^I} \left( 1 + \sqrt{a_t^I} \right) \right)
		M_{11} \sum_J \sqrt{z_t^J}.
	\end{align*}
	Using Cauchy-Schwarz inequalities in $\RR^N$, we find that
	\begin{align*}
		\dot{Z}_t
		&\leq -2(\tau-\tau_0) A_t
		+ 2 \sqrt{N} M_{11} \cdot Z_t \sqrt{A_t} \\
		\dot{A}_t
		&\leq -2\tau B_t + 2 \sqrt{2} \, N M_{11} \cdot \sqrt{B_t} (1 + \sqrt{A_t}) \cdot \sqrt{Z_t}.
	\end{align*}
	This is the same system of inequalities as \eqref{eq:MFLD_L2:localL2:zab_system_quad} in the proof of \autoref{thm:MFLD_L2:localL2:quad}, except that ``$M_{11}$'' is replaced by $\sqrt{2} N M_{11}$. 
	We also still have that
	$c_{\PI} Z_t \leq A_t \leq c_{\PI}^{-1} B_t$.
	So we can follow the same steps and obtain the same conclusions as in that proof.
	Namely we get that for any $0<\eps<1$,
	if $Z_0 \leq \frac{(\tau-\tau_0)^2 c_{\PI} (\eps/2)^2}{2 N^2 M_{11}^2}$,
	which can be ensured by assuming 
	$A_0 \leq \frac{(\tau-\tau_0)^2 c_{\PI}^2 (\eps/2)^2}{2 N^2 M_{11}^2}$,
	then
	$Z_t \leq e^{-2 (\tau-\tau_0) c_{\PI} (1-\eps/2) t} Z_0$
	and $A_t \leq \left( 1 + \frac{2 N^2 M_{11}^2}{\tau (\tau-\tau_0) c_{\PI}^2 \eps^2} \right) e^{-2(\tau-\tau_0) c_{\PI} (1-\eps) t} A_0$.
	This is precisely the announced statement of \autoref{thm:MFG:quadr}.
\end{proof}

\section{Conclusion} \label{sec:ccl}

We analyzed the local convergence of mean-field Langevin type dynamics, in $\chi^2$-divergence, under linear convexity type assumptions.
For the most common setting, the Wasserstein gradient flow of $F_\tau = F + \tau H$ with $F$ displacement-smooth,
under (relaxations of) linear convexity of $F$,
we showed local and long-time convergence with a tight rate estimate of $2 \tau$ times the Poincar\'e inequality constant of the stationary distribution.
Contrary to prior quantitative convergence analyses, we do not require $F$ displacement-convex, nor $\tau$ large, nor a uniform LSI condition.
We then adapted our analysis to the mean-field Langevin descent-ascent dynamics, which is the Wasserstein gradient descent-ascent flow of $(\mu^x, \mu^y) \mapsto \iint_{\TT^{d_x} \times \TT^{d_y}} k \,\d(\mu^x \otimes \mu^y) + \tau H(\mu^x) - \tau H(\mu^y)$.
Its long-time convergence behavior (for nonconvex-nonconcave $k$ and without assuming $\tau$ large) is an open problem, which our result reduces to whether the dynamics eventually passes through a $\chi^2$ neighborhood of the equilibrium.
Finally, we applied our analysis to the general setting of multi-species flows with diffusion, assuming (relaxations of) a linear monotonicity condition which generalizes Lasry-Lions monotonicity.

We end by highlighting some open directions for future work.
\begin{itemize}
	\item Our results provide lower estimates on the asymptotic rate of exponential convergence during the final phase of the aforementioned dynamics.
	As mentioned in \autoref{subsubsec:MFLD_L2:localL2:exact_rate} and \autoref{rk:MFL-DA:minReSp}, we expect that the exact asymptotic rate can be characterized
	as the spectral abscissa of a certain unbounded operator.
	However, rigorously confirming this intuition is left as a technical question.
	\item In our main assumption, \autoref{assum:MFLD_A}, we required the ``local weak linear convexity'' parameter $\tau_0$ to lie in $[0, \tau)$.
	As mentioned in
	\autoref{subsubsec:MFLD_localL2:range_of_tau0}, restricting $\tau_0$ to $[0,\tau]$ is without loss of generality,
	but our analysis does not cover the critical case $\tau = \tau_0$.
	The exploration of this case is left for future work.
\end{itemize}

\printbibliography
\addcontentsline{toc}{section}{\refname} 

\ifextended%
\newpage
\appendix
\phantomsection
\addcontentsline{toc}{section}{APPENDIX}

\section[Otto calculus interpretation of PI]{Otto calculus interpretation of Poincar\'e inequality} \label{sec:apx_OL}

This appendix is devoted to remarks and formal computations that can provide intuition for our results, in the spirit of Otto calculus.
It can be read independently of the rest of the paper.

Throughout this appendix, let $\nu \propto e^{-V(x)} \d x \in \PPP_2(\RR^d)$ with $V$ of class $\CCC^2$ and satisfying appropriate growth conditions. 
Denote the generator of the associated overdamped Langevin dynamics by
\begin{equation*}
	L f = -\Delta f + \nabla V \cdot \nabla f
	= -\frac{1}{\nu} \nabla \cdot (\nu \nabla f).
\end{equation*}
That is, the time-marginals $\mu_t$ of the overdamped Langevin dynamics evolve according to $\partial_t f_t = -L f_t$ with $\mu_t = f_t \nu$.
Note that for any sufficiently regular $f, g: \RR^d \to \RR$, we have 
$\innerprod{f}{Lg}_\nu = \innerprod{\nabla f}{\nabla g}_\nu$ by integration by parts, where $\innerprod{f}{g}_\nu = \int f \cdot g \,\d\nu$. 
So we can write formally
$L = \nabla^* \nabla$
where $*$ denotes adjoints in~$\LLL^2_\nu$.
Accordingly, we define an operator $\nabla^*$, mapping vector fields to scalar fields, by
\begin{equation*}
	\nabla^* \Phi = -\nabla \cdot \Phi + \nabla V \cdot \Phi = -\frac1\nu \nabla \cdot (\nu \Phi).
\end{equation*}
In the context of Stein's method in statistics,
$-\nabla^*$ is sometimes called the Langevin-Stein operator.

Recall that we say $\nu$ satisfies Poincar\'e inequality (PI) with constant $c_{\PI}$ if
\begin{equation*}
	\forall f ~\text{s.t.} \int f \,\d\nu=0,~~
	c_{\PI} \int \abs{f}^2 \d\nu \leq \int \norm{\nabla f}^2 \d\nu.
\end{equation*}
Equivalently, $L \succeq c_{\PI} \id$ in $\LLL^2_\nu \cap \{1\}^\perp$. In other words, PI is equivalent to the symmetric operator $L$ having a positive spectral gap.

\subsection{\texorpdfstring{$\nabla^*$}{nabla*} as a formal bijection between Wasserstein and Fisher-Rao tangent spaces}

In the Wasserstein geometry, the tangent space of $\PPP_2(\RR^d)$ at a measure $\nu$ is
$
	T_\nu \PPP_2(\RR^d) = \overline{\nabla \CCC^\infty_c}^{\,\bm\LLL^2_\nu}
$
i.e., the subspace of $\bm\LLL^2_\nu$ consisting of gradient fields.
In the Fisher-Rao geometry, the tangent space at $\nu$ is
$\left\{ s \in \MMM(\RR^d); \int \d s = 0 ~\text{and}~ \int \nu^{-1} \abs{s}^2 \d x = \int \abs{\frac{\d s}{\d \nu}}^2 \d\nu < \infty \right\}$,
and the dual tangent space is $\LLL^2_\nu \cap \{1\}^\perp$, with the primal-dual correspondence $s = f \nu$.

Morally, if $\nu$ satisfies PI,
then $\nabla^*$ establishes a bijection from $T_\nu \PPP_2(\RR^d)$ to $\LLL^2_\nu \cap \{1\}^\perp$, with invertibility being a consequence of the spectral gap property of $L = \nabla^* \nabla$:
formally $(\nabla^*)^{-1} = \nabla L^{-1}$.
However $\nabla^*$ is not well-defined as an operator between these two spaces,
due to regularity issues.%
\footnote{A possible formalization via Fr\'echet spaces is given in \cite[Sec.~3.3]{lafferty1988density} in the case of $\nu$ having a $\CCC^\infty$-smooth and compactly supported density w.r.t.\ the Lebesgue measure.}
Yet this idea is quite useful as a heuristic.
For example it intervened implicitly in our proof of \autoref{prop:MFLD_L2:localL2:tau0=tau_unstable}, where effectively, in the notations of that Proposition, we used $\nabla L^{-1} f \in T_\nu \PPP_2(\RR^d)$ as a descent direction for $F_\tau$ in the Wasserstein geometry --- which was the right geometry to work in, given the available regularity assumptions on $F$.

This idea also underlies the proof of the following standard fact \cite[Lemma~1]{barthe2013invariances} (see also \cite[Sec.~2.2.2 ``The Brascamp-Lieb Inequality'']{chewi2024book}),
which we present as an illustration.
This lemma will also be reused in the next subsection.

\begin{lemma} \label{lm:apx_OL:hormanderL2}
	Suppose $V = -\log \frac{\d\nu}{\d x}$ is $\CCC^\infty$-smooth.
	Consider any $\CCC^\infty$-smooth $A: \RR^d \to \RR^{d \times d}$ such that, for all $x$, the matrix $A(x)$ is symmetric positive-definite.
	Then the following statements are equivalent:
	\begin{enumerate}[label=(\roman*)]
		\item For any gradient field $\Phi \in \CCC^\infty_c(\RR^d, \RR^d)$,~
		$\int (\nabla^* \Phi)^2 \d\nu \geq \int \Phi^\top A \Phi \,\d\nu$.
		\item For any $f \in \CCC^\infty_c(\RR^d)$ such that $\int f \, \d\nu=0$,~
		$\int \abs{f}^2 \d\nu \leq \int \nabla f^\top A^{-1} \nabla f \,\d\nu$.
	\end{enumerate}
\end{lemma}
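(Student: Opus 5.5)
The plan is to treat both statements as a pair of dual quadratic-form inequalities linked through the operator $\nabla^*$, exploiting that $\nabla L^{-1}\nabla^*$ is the orthogonal projector onto gradient fields in $\bm\LLL^2_\nu$. Throughout, $A$ is smooth and symmetric positive-definite pointwise, so $A^{1/2}$ and $A^{-1}$ are well-defined and smooth. The key tool is a Cauchy--Schwarz argument in the $A$-weighted inner product $\innerprod{\Phi}{\Psi}_A = \int \Phi^\top A \Psi \,\d\nu$ on vector fields, and its dual $\innerprod{\Phi}{\Psi}_{A^{-1}}$.

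For (i) $\Rightarrow$ (ii), take $f\in\CCC^\infty_c$ with $\int f\,\d\nu=0$. Solve $L\phi = f$ (possible by PI, which we will need: (i) applied to $\Phi=\nabla\phi$ gives $\norm{L\phi}_\nu^2 \ge \int \nabla\phi^\top A\nabla\phi\,\d\nu$, but I will simply assume the needed solvability and regularity, since $V$ is smooth and $f$ is compactly supported; elliptic regularity gives $\phi$ smooth, and a cutoff/density argument extends (i) from $\CCC^\infty_c$ gradient fields to $\Phi=\nabla\phi$). Then $f = \nabla^*\nabla\phi$, so
\begin{equation*}
\int f^2\,\d\nu = \innerprod{f}{\nabla^*\nabla\phi}_\nu = \int \nabla f^\top \nabla\phi\,\d\nu \le \Big(\int \nabla f^\top A^{-1}\nabla f\,\d\nu\Big)^{1/2}\Big(\int \nabla\phi^\top A\nabla\phi\,\d\nu\Big)^{1/2}.
\end{equation*}
By (i) with $\Phi=\nabla\phi$, the last factor is at most $\norm{\nabla^*\nabla\phi}_\nu = \norm{f}_\nu$. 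Dividing by $\norm{f}_\nu$ and squaring yields (ii).

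For (ii) $\Rightarrow$ (i), take a gradient field $\Phi=\nabla\phi\in\CCC^\infty_c$ and set $f=\nabla^*\Phi$, which is smooth, compactly supported, and has $\int f\,\d\nu = \innerprod{1}{\nabla^*\Phi}_\nu = 0$. I first use the variational identity
\begin{equation*}
\int \Phi^\top A\Phi\,\d\nu = \sup_{g}\Big\{2\int \nabla g^\top \Phi\,\d\nu - \int \nabla g^\top A^{-1}\nabla g\,\d\nu\Big\},
\end{equation*}
which attains its supremum at $\nabla g = A\Phi$. That choice is not a gradient in general, so instead I apply the Cauchy--Schwarz inequality in reverse. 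Write $\int\Phi^\top A\Phi\,\d\nu = \int \Psi^\top \Phi\,\d\nu$ with $\Psi = A\Phi$. The obstruction is that $\Psi$ is not a gradient. The approach I commit to is to apply (ii) to test functions $g\in\CCC^\infty_c$ and bound
\begin{equation*}
\int \nabla g^\top\Phi\,\d\nu = \innerprod{g}{f}_\nu \le \norm{g - \textstyle\int g\,\d\nu}_\nu\norm{f}_\nu \le \Big(\int\nabla g^\top A^{-1}\nabla g\,\d\nu\Big)^{1/2}\norm{\nabla^*\Phi}_\nu.
\end{equation*}
Hence the linear functional $\nabla g\mapsto \int\nabla g^\top\Phi\,\d\nu$ has norm at most $\norm{\nabla^*\Phi}_\nu$ with respect to the $A^{-1}$-weighted norm. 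Its supremum over gradient fields equals the $A$-norm of the $A^{-1}$-orthogonal projection of $A\Phi$ onto the closure of gradients.

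The main obstacle is showing that this supremum equals $\int\Phi^\top A\Phi\,\d\nu$ in full, that is, that $A\Phi$ projects onto itself. Equivalently, we need $\Phi$ itself to be in the $A$-weighted closure of gradients, which does hold: $\Phi=\nabla\phi$ and we test with $g$ approximating $\phi$. Testing with $g=\phi$ gives
\begin{equation*}
\int\nabla\phi^\top\nabla\phi\,\d\nu = \int \nabla\phi^\top\Phi\,\d\nu,
\end{equation*}
so the duality must be set up with $A$ and $A^{-1}$ placed correctly. Concretely, I would rederive the bound with the pairing $\int \nabla g^\top A\,\Phi\,\d\nu$ versus the $A$-norm of $\nabla g$, and handle the mismatch by approximating $A\Phi$ in the $A^{-1}$-norm by gradients modulo its orthogonal complement, which is killed by the pairing. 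Carefully checking this Hilbert-space duality is the step I expect to require the most care. The final step is the density and regularity justification for extending to $\CCC^\infty_c$.
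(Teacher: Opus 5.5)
Your (i)$\Rightarrow$(ii) argument is the standard Barthe--Cordero-Erausquin one and is fine, modulo solvability of $L\phi=f$, which the paper also takes for granted. The genuine gap is in (ii)$\Rightarrow$(i). Write $\norm{\Psi}_{A^{-1}}^2=\int\Psi^\top A^{-1}\Psi\,\d\nu$, and let $\Pi$ be the orthogonal projection, for this inner product, onto the closure of gradient fields. What your duality step extracts from (ii) is
\begin{equation*}
\norm{\nabla^*\Phi}_\nu^2\;\geq\;\sup_g\frac{\big(\int\nabla g^\top\Phi\,\d\nu\big)^2}{\int\nabla g^\top A^{-1}\nabla g\,\d\nu}\;=\;\norm{\Pi(A\Phi)}_{A^{-1}}^2,
\end{equation*}
whereas (i) asks for $\int\Phi^\top A\Phi\,\d\nu=\norm{A\Phi}_{A^{-1}}^2$.

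The two coincide only if $A\Phi$ (not $\Phi$) lies in the closure of gradient fields. So your step ``equivalently, we need $\Phi$ itself to be in the closure of gradients'' is not an equivalence. Testing with $g=\phi$ only gives $\norm{\Phi}_\nu^4\leq\big(\int\Phi^\top A^{-1}\Phi\,\d\nu\big)\norm{\nabla^*\Phi}_\nu^2$, which is weaker than (i). The suggested repair through the pairing $\int\nabla g^\top A\Phi\,\d\nu$ is not available either. (ii) only controls $\int\nabla g^\top\Phi\,\d\nu=\innerprod{g}{\nabla^*\Phi}_\nu$. The component $(\id-\Pi)(A\Phi)\in\ker(\nabla^*A^{-1})$ is invisible to it, and it is exactly the part of $\int\Phi^\top A\Phi\,\d\nu$ that you lose.

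This missing step cannot be supplied for general $A$, and the paper's proof does not supply it either. The paper uses $B^*B\succeq\id\iff BB^*\succeq\id$ with $B=A^{-1/2}\nabla$. But it treats multiplication by $A$ as a map $G\to G$ and takes $B$ to map $S$ onto $\tilde G=A^{1/2}G$, whereas $B$ actually maps onto $A^{-1/2}G$. Run on the correct range, the same abstract fact shows that (ii) is equivalent to the inequality of (i) only for $\Phi\in A^{-1}G$, which is precisely your obstruction.

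In fact (ii)$\not\Rightarrow$(i) in general when $d\geq2$. Suppose (ii) holds with equality attained at some $f_1\neq0$, and let $Lu=f_1$. Your own Cauchy--Schwarz chain gives $\int(Lu)^2\,\d\nu=\norm{f_1}_\nu^2\leq\int\nabla u^\top A\nabla u\,\d\nu$, with equality only if $A^{-1}\nabla f_1$ is a gradient field. For $A=a\,\id$ with $\nabla a$ not everywhere parallel to $\nabla f_1$ this fails. Then (i) is violated at $\Phi=\nabla u$ (after a density argument) while (ii) holds.

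The equivalence, and your argument, do go through when multiplication by $A$ maps gradient fields into the closure of gradient fields. An example is $A=c\,\id$ with $c>0$ constant, which is the only case used in the subsequent theorem on PI. There you can test with $g$ such that $\nabla g=A\Phi$, and the supremum then equals $\int\Phi^\top A\Phi\,\d\nu$.
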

\vspace{0.5em}

\begin{proof}
	In this proof, for any pre-Hilbert spaces $E, F$, we call two operators $B: E \to F$ and $B^*: F \to E$ adjoint to each other if it holds $\forall e, f,~ \innerprod{Be}{f} = \innerprod{e}{B^* f}$.
	
	Let $G = \left\{ \Phi \in \CCC^\infty_c(\RR^d, \RR^d) ~\text{s.t.\ $\Phi$ is a gradient field} \right\}$
	and $S = \left\{ f \in \CCC^\infty_c(\RR^d) ~\text{s.t.} \int f\,\d\nu = 0 \right\}$,
	viewed as pre-Hilbert spaces equipped with the inner products $\innerprod{\Phi_1}{\Phi_2}_\nu = \int \Phi_1^\top \Phi_2 \,\d\nu$ and $\innerprod{f_1}{f_2}_\nu = \int f_1 f_2 \,\d\nu$ respectively.
	Since $V$ is $\CCC^\infty$, then $\nabla^*$ maps $G$ into $S$.
	Moreover in the other direction, the inverse by $L$ of any $f \in S$ is well-defined and belongs to $\CCC^\infty_c$ \cite[Thm.~3 of Sec.~6.3]{evans2010pde}, so $\nabla^*$ is invertible from $G$ to $S$ with $(\nabla^*)^{-1} = \nabla L^{-1}$.
	Also note that $\nabla: S \to G$ is adjoint to $\nabla^*$, and is invertible with $\nabla^{-1} = L^{-1} \nabla^*$.
	
	Denote with abuse of notation by $A: G \to G$ the operator $\Phi \mapsto A(\cdot) \Phi(\cdot)$, and likewise for $A^{-1}, A^\half, A^{-\half}$.
	Statement $(i)$ can be reformulated as $(\nabla^*)^* \nabla^* = \nabla \nabla^* \succeq A$ in $G$, 
	and statement $(ii)$ as $\nabla^* A^{-1} \nabla \succeq \id$ in $S$.
	So the claimed equivalence can be reformulated as
	\begin{equation*}
		(A^{-\half} \nabla) (A^{-\half} \nabla)^* \succeq \id_{\tG} 
		\iff 
		(A^{-\half} \nabla)^* (A^{-\half} \nabla) \succeq \id_S
	\end{equation*}
	where $\tG = A^{\half} G$,
	and $A^{-\half} \nabla: S \to \tG$ 
	and its adjoint $(A^{-\half} \nabla)^* = \nabla^* A^{-\half}$
	are invertible.
	
	Now it is a general fact that for any pre-Hilbert spaces $E, F$ and any adjoint operators $B: E \to F$, $B^*: F \to E$ that are invertible,
	we have the equivalence $B^* B \succeq \id_E \iff B B^* \succeq \id_F$.
	Indeed, if $B^* B \succeq \id_E$, then for any $f \in F$, denoting $e = B^{-1} f$, we have
	$\norm{f}^2 = 2 \innerprod{B^* f}{e} - \innerprod{Be}{Be} \leq 2 \norm{B^* f} \norm{e} - \norm{e}^2 \leq \norm{B^* f}^2$.
	The other implication follows similarly.
\end{proof}

\begin{remark}
	We believe that the assumption that $V$ is $\CCC^\infty$-smooth can be relaxed to $\CCC^2$-smoothness, but we were unable to find a suitable reference for the equivalence of $(i)$ and $(ii)$ under this setting. For the direction $(i) \implies (ii)$, this is precisely the content of \cite[Lemma~1]{barthe2013invariances}.
\end{remark}

\begin{remark}
	One may ask whether the statements $(i), (ii)$ are also equivalent to
	{\itshape
		\begin{itemize}
			\item[(i')] For any \emph{vector} field $\Phi \in \CCC^\infty_c(\RR^d, \RR^d)$,~
			$\int (\nabla^* \Phi)^2 \d\nu \geq \int \Phi^\top A \Phi \,\d\nu$.
		\end{itemize}
	}
	\noindent
	This is not the case, as one can see by considering the orthogonal decomposition $\bm\LLL^2_\nu = \Ima \nabla \oplus \Ker \nabla^*$.
	Indeed for $\Phi \in \Ker \nabla^* \setminus \{0\}$, the left-hand side is zero and the right-hand side is positive.%
	\footnote{To show the existence of a $\CCC^\infty$-smooth and compactly supported element of $\Ker \nabla^*$, it suffices to take $\Phi = \nu^{-1} w$ for a $\CCC^\infty$-smooth, compactly supported, and divergence-free vector field $w$.
	For example one can take 
	$w(x) = \xi(\norm{x}) A x$ for a bump function $\xi: \RR \to \RR$ and an antisymmetric matrix $A$.}
\end{remark}


\subsection{PI as non-degeneracy of the Hessian of \texorpdfstring{$\KLdiv{\cdot}{\nu}$}{KL(.|nu)} at optimum}

The seminal work of \cite{otto2000generalization} promoted the view that functional inequalities such as PI or LSI can be interpreted, via Otto calculus, as conditions on $\FFF(\mu) = \KLdiv{\mu}{\nu}$. 
Let us summarize the correspondences:
\begin{itemize}[itemsep=0pt]
	\item LSI for $\nu$ $\leftrightarrow$ Polyak-Lojasiewicz inequality for $\FFF$, with the same constant;
	\item Talagrand T2 inequality for $\nu$ $\leftrightarrow$ quadratic growth property for $\FFF$, with the same constant;
	\item HWI inequality for $\nu$ $\leftrightarrow$ (a consequence of) star-strong convexity for $\FFF$ \cite[Def~5.1]{guille2021study};
	\item PI for $\nu$ $\leftrightarrow$ Hessian of $\FFF$ at optimum lower-bounded by $c_{\PI}$;
	\item Curvature-Dimension condition $\mathrm{CD}(\alpha, \infty)$ $\leftrightarrow$ Hessian of $\FFF$ uniformly lower-bounded by $\alpha$.
	The direct implication is classical, and the converse follows from the work of \cite{erbar2015equivalence}, as explained in \cite[Eq.~(14)]{clerc2023variational}.
\end{itemize}

Here ``Hessian'' refers to the following notion, where we stress the requirement that $\Phi$ is a gradient field.
\begin{definition}[{\cite[Chapter~15]{villani2008optimal}}]
	For $\FFF: \PPP_2(\RR^d) \to \RR$, the Wasserstein Hessian of $\FFF$ at an absolutely continuous measure $\mu$ is (if it exists) the symmetric bilinear form $\Hess_\mu \FFF$ over $T_\mu \PPP_2(\RR^d)$ such that
	$
		\forall \Phi \in T_\mu \PPP_2(\RR^d),~
		\Hess_\mu \FFF(\Phi, \Phi) = \restr{\frac{d^2}{ds^2}}{s=0} \FFF((\id + s \Phi)_\sharp \mu)
	$.
	
	For $\FFF(\mu) = \KLdiv{\mu}{\nu}$, the Wasserstein Hessian can be computed formally to be 
	\begin{equation*}
		\Hess_\mu \KLdiv{\cdot}{\nu}(\Phi, \Phi) = \int \Gamma_2(\Phi, \Phi) \,\d\mu
		~~~\text{where}~~~
		\Gamma_2(\Phi, \Phi) = \trace((\nabla \Phi)^2) + \Phi^\top \nabla^2 V \Phi.
	\end{equation*}
	Note that this quantity is finite only for $\Phi$ smooth enough ($\Phi \in T_\mu \PPP_2(\RR^d)$ is not sufficient).
\end{definition}

All of these correspondences are classical \cite{otto2000generalization}, except perhaps for the one for PI, which we now show.
The proof hinges on the following identity, showing that the Wasserstein Hessian of $\KLdiv{\cdot}{\nu}$ at $\nu$ itself has two different nice forms.
It appeared, e.g.,\ in \cite[Eq.~(10)]{barthe2013invariances}, for $\Phi$ being a gradient field.
The fact that it actually holds for $\Phi$ being any vector field appears to be new, but note that
$\trace((\nabla \Phi)^2)$ may be negative if $\Phi$ is not a gradient field.

\begin{lemma} \label{lm:apx_OL:hessopt_two_forms}
	For $\nu \propto e^{-V(x)} \d x$ and $\FFF(\mu) = \KLdiv{\mu}{\nu}$, we have formally, for any vector field~$\Phi$,
	\begin{equation*}
		\Hess_\nu \FFF(\Phi, \Phi)
		= \int \Gamma_2(\Phi,\Phi) \,\d\nu
		= \int (\nabla^* \Phi)^2 \,\d\nu
	\end{equation*}
	where $\Gamma_2(\Phi, \Phi) = \trace((\nabla \Phi)^2) + \Phi^\top \nabla^2 V \Phi$
	and $\nabla^* \Phi = -\nabla \cdot \Phi + \nabla V \cdot \Phi = -\frac1\nu \nabla \cdot (\nu \Phi)$.
\end{lemma}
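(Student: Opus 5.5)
The plan is to prove the two equalities separately, both by formal integration by parts against $\nu$, with $\Phi$ a smooth compactly supported vector field.

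\textbf{First equality.} We compute $\frac{d^2}{ds^2}\KLdiv{\mu_s}{\nu}$ at $s=0$ for $\mu_s = (\id + s\Phi)_\sharp \nu$. Write $\KLdiv{\mu_s}{\nu} = \int V \,\d\mu_s + H(\mu_s) + \cst$. For the potential part,
\begin{equation*}
	\int V\,\d\mu_s = \int V(x+s\Phi(x))\,\d\nu(x),
\end{equation*}
whose second derivative at $s=0$ is $\int \Phi^\top \nabla^2 V \Phi\,\d\nu$. For the entropy part, the change of variables formula gives
\begin{equation*}
	H(\mu_s) = H(\nu) - \int \log\det(I + s\nabla\Phi)\,\d\nu,
\end{equation*}
valid for $s$ small since $\id + s\Phi$ is then a diffeomorphism. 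Expanding $\log\det(I+sB) = s\,\trace B - \tfrac{s^2}{2}\trace(B^2) + O(s^3)$, the second derivative of $H(\mu_s)$ at $s=0$ is $\int \trace((\nabla\Phi)^2)\,\d\nu$. Summing the two contributions yields $\int \Gamma_2(\Phi,\Phi)\,\d\nu$. Nothing in this computation uses that $\Phi$ is a gradient field.

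\textbf{Second equality.} This is the key step. Expand
\begin{equation*}
	(\nabla^*\Phi)^2 = (\nabla\cdot\Phi)^2 - 2(\nabla\cdot\Phi)(\nabla V\cdot\Phi) + (\nabla V\cdot\Phi)^2,
\end{equation*}
and show that $\int (\nabla^*\Phi)^2\,\d\nu - \int \trace((\nabla\Phi)^2)\,\d\nu = \int \Phi^\top\nabla^2V\Phi\,\d\nu$. The natural move is to integrate by parts in the cross term $\int (\partial_i\Phi_i)(\partial_j V)\Phi_j\, e^{-V}$, moving the derivative $\partial_i$ off $\Phi_i$. This produces three pieces: $-\int \Phi_i\Phi_j\partial_{ij}V\,\d\nu$, then $-\int \Phi_i \partial_jV\,\partial_i\Phi_j\,\d\nu$, and finally $+\int \Phi_i\partial_iV\,\Phi_j\partial_jV\,\d\nu$.

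Rather than tracking these pieces term by term, a cleaner route is to work in divergence form from the start. Write $\nu\nabla^*\Phi = -\nabla\cdot(\nu\Phi)$, set $w = \nu\Phi$, and use the standard identity
\begin{equation*}
	\int (\nabla\cdot u)^2 - \trace((\nabla u)^2) \,\d x = 0, \qquad u \in \CCC^\infty_c(\RR^d,\RR^d),
\end{equation*}
which follows from integrating by parts twice: $\int \partial_i u_i\,\partial_j u_j = \int \partial_j u_i\,\partial_i u_j$. I would apply this identity with the weight $e^{-V}$ inserted, that is, prove the weighted identity directly by the same two integrations by parts. Each integration by parts then generates extra $\partial V$ terms, and I would carefully collect them.

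\textbf{Expected outcome and main obstacle.} The expected outcome is that all first-order $\nabla V$ terms cancel, leaving exactly $\int \Phi^\top\nabla^2V\Phi\,\d\nu$. This amounts to a weighted Bochner-type identity at the level of vector fields, without curl terms. The curl terms cancel because only the symmetric combination $\trace((\nabla\Phi)^2) = \partial_j\Phi_i\,\partial_i\Phi_j$ appears, never $\norm{\nabla\Phi}_F^2$. This is precisely why the identity holds for arbitrary, non-gradient $\Phi$. The main obstacle is bookkeeping in this step, namely verifying that the terms of the form $\Phi_i\partial_jV\,\partial_i\Phi_j$ produced by the two integrations by parts cancel exactly. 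I expect them to pair off with opposite signs once the index roles of $i$ and $j$ are matched.
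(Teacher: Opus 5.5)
\textbf{Verdict.} Your plan is correct, and the bookkeeping you left as an ``expected outcome'' does close. Set $T=\int\trace((\nabla\Phi)^2)\,\d\nu$, $A=\int(\nabla\cdot\Phi)(\nabla V\cdot\Phi)\,\d\nu$, $B=\int\Phi_i\,\partial_i\Phi_j\,\partial_jV\,\d\nu$, $C=\int(\nabla V\cdot\Phi)^2\,\d\nu$ and $D=\int\Phi^\top\nabla^2V\,\Phi\,\d\nu$.
\begin{itemize}
\item Your two weighted integrations by parts give $\int(\nabla\cdot\Phi)^2\,\d\nu=T+A-B$.
\item Your cross-term integration by parts gives $A=C-B-D$.
\item Hence $\int(\nabla^*\Phi)^2\,\d\nu=(T+A-B)-2A+C=T+D$.
\end{itemize}
The $B$ terms cancel as you predicted, and so do the $C$ terms. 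The detour through $w=\nu\Phi$ and the unweighted identity does not help by itself, because $\int(\nabla^*\Phi)^2\,\d\nu=\int\nu^{-1}(\nabla\cdot w)^2\,\d x$ carries the weight $\nu^{-1}$; the term-by-term route is what works. Your first step via the expansion of $\log\det$ is also correct and uses no gradient structure.

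\textbf{Comparison with the paper.} The paper mentions your route only in passing (``explicit computations using integration by parts and Bochner's formula''). Its detailed proof instead computes the second derivative of $\KLdiv{\mu_t}{\nu}$ at $t=0$ along a curve through $\nu$ with initial velocity $\Phi$, in two ways:
\begin{itemize}
\item The Wasserstein second-order chain rule gives $\int\Gamma_2(\Phi,\Phi)\,\d\nu$ plus an acceleration term paired with $\nabla\log(\mu_0/\nu)$.
\item The flat measure-space expansion gives $\int\nu^{-1}(\partial_t\mu_t|_{t=0})^2\,\d x$ plus a term paired with $\log(\mu_0/\nu)$.
\end{itemize}
Both extra terms vanish because $\mu_0=\nu$ is the minimizer, and $\partial_t\mu_t|_{t=0}=\nu\,\nabla^*\Phi$.

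On your own curve $\mu_s=(\id+s\Phi)_\sharp\nu$, the measure-space side is elementary. One has $\rho_s=\d\mu_s/\d\nu=1+s\nabla^*\Phi+O(s^2)$, so $\KLdiv{\mu_s}{\nu}=\int(\rho_s\log\rho_s-\rho_s+1)\,\d\nu=\frac{s^2}{2}\int(\nabla^*\Phi)^2\,\d\nu+O(s^3)$. Combined with your first step, this yields the second equality with no index bookkeeping. It also makes transparent why gradient structure is irrelevant: both expansions hold for any velocity field, and the first-order terms die only because of stationarity at $\nu$.

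What your integration-by-parts route buys instead is a direct, purely computational verification of the weighted Bochner-type identity $\int(\nabla^*\Phi)^2\,\d\nu=\int\trace((\nabla\Phi)^2)\,\d\nu+\int\Phi^\top\nabla^2V\,\Phi\,\d\nu$. This is a standalone fact, independent of any variational interpretation.
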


\begin{proof}
	This can be shown by explicit computations using integration by part and Bochner's formula. 
	
	Alternatively, here is a novel (to our knowledge) interpolation-based proof. Consider any curve $(\mu_t)_t$ with $\partial_t \mu_t = -\nabla \cdot (\mu_t \Phi_t)$ and $\mu_0 = \nu$ and $\Phi_0 = \Phi$, \emph{with no assumption on $(\Phi_t)_t$}. 
	Then we can compute $\restr{\frac{d^2}{dt^2} \KLdiv{\mu_t}{\nu}}{t=0}$ in two different ways. From a Wasserstein geometry perspective we obtain
	\cite{wang2025higher}
	\begin{equation*}
		\restr{\frac{d^2}{dt^2} \FFF(\mu_t)}{t=0}
		= \Hess_\nu \FFF(\Phi_0, \Phi_0)
		+ \int (\cD \Phi)_0 \cdot \nabla \FFF'[\mu_0] \,\d\mu_0
		= \int \Gamma_2(\Phi, \Phi) \,\d\nu + 0
	\end{equation*}
	since $\mu_0 = \nu$, where $(\cD \Phi)_t = \partial_t \Phi_t + (\nabla \Phi_t) \Phi_t$ is the Wasserstein acceleration. 
	From a measure-space perspective we obtain
	\begin{align*}
		\restr{\frac{d^2}{dt^2} \KLdiv{\mu_t}{\nu}}{t=0}
		&= \innerprod{\restr{\partial_t \mu_t}{t=0}}{\bm{H}_{\mu_0} \restr{\partial_t \mu_t}{t=0}}
		+ \innerprod{\restr{\partial_{tt}^2 \mu_t}{t=0}}{\log \frac{\mu_0}{\nu}} \\
		&= \int \frac{1}{\mu_0} \left( \restr{\partial_t \mu_t}{t=0} \right)^2 + 0 \\
		&= \int \frac{1}{\nu} \abs{\nabla \cdot (\nu \Phi)}^2 \\
		&= \int \abs{\frac{1}{\nu} \nabla \cdot (\nu \Phi)}^2 \d\nu
		= \int \abs{\nabla^* \Phi}^2 \d\nu
	\end{align*}
	where $\bm{H}_{\mu}$ denotes the measure-space Hessian of $\KLdiv{\cdot}{\nu}$ at $\mu$.
	The second term on the first line is zero again because of $\mu_0=\nu$.
	The lemma follows by equating the two above computations.
\end{proof}

Substituting the identity of \autoref{lm:apx_OL:hessopt_two_forms} into the equivalence of \autoref{lm:apx_OL:hormanderL2}
leads to the following.

\begin{theorem}
	For any $\nu \in \PPP_2(\RR^d)$, the following conditions are equivalent (up to regularity considerations):
	\begin{equation*}
		\forall f ~\text{s.t.}~ \int f \,\d\nu=0,~~~~
		\int \abs{f}^2 \d\nu
		\leq 
		c_{\PI}^{-1} \int \norm{\nabla f}^2 \d\nu
	\end{equation*}
	and
	\begin{equation*}
		\forall \Phi \in T_\nu \PPP_2(\RR^d) = \overline{\nabla \CCC^\infty_c}^{\,\bm\LLL^2_\nu},~~~~
		\Hess_\nu \KLdiv{\cdot}{\nu}(\Phi, \Phi) 
		\geq 
		c_{\mathrm{PI}} \int \norm{\Phi}^2 \d\nu.
	\end{equation*}
	In words, $\nu$ satisfying PI with constant $c_{\PI}$ is equivalent to the Wasserstein Hessian of $\KLdiv{\cdot}{\nu}$ at optimum being lower-bounded by $c_{\PI}$.
\end{theorem}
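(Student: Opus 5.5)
The plan is to derive the theorem directly from \autoref{lm:apx_OL:hessopt_two_forms} and \autoref{lm:apx_OL:hormanderL2}, specialized to the constant matrix field $A \equiv c_{\PI} I_d$. The first lemma turns the Hessian into a Fisher-type quantity. For any gradient field $\Phi$ we have formally
\begin{equation*}
	\Hess_\nu \KLdiv{\cdot}{\nu}(\Phi,\Phi) = \int (\nabla^* \Phi)^2 \,\d\nu .
\end{equation*}
The second condition of the theorem therefore reads: $\int (\nabla^*\Phi)^2 \d\nu \geq c_{\PI} \int \norm{\Phi}^2 \d\nu$ for all gradient fields $\Phi$. This is exactly statement (i) of \autoref{lm:apx_OL:hormanderL2} with $A = c_{\PI} I_d$. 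That choice of $A$ is smooth, symmetric and positive-definite, and $A^{-1} = c_{\PI}^{-1} I_d$. Statement (ii) then says $\int \abs{f}^2 \d\nu \leq c_{\PI}^{-1} \int \norm{\nabla f}^2 \d\nu$ for centered $f \in \CCC^\infty_c$, which is PI. So the two conditions are equivalent on the dense classes $\CCC^\infty_c$ of gradient fields and centered test functions.

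The remaining work is to pass from these smooth classes to the full spaces in the statement: all centered $f$ with $\nabla f \in \bm\LLL^2_\nu$, and all $\Phi \in T_\nu \PPP_2(\RR^d)$.

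For PI, I would extend from $\CCC^\infty_c$ by the standard approximation in $H^1_\nu$. Take a centered $f$, truncate and cut it off, mollify, and re-center. Both $\int \abs{f}^2 \d\nu$ and $\int \norm{\nabla f}^2 \d\nu$ pass to the limit.

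For the Hessian condition, I would interpret it (as the statement's ``up to regularity'' allows) through the closure of the quadratic form $\Phi \mapsto \norm{\nabla^*\Phi}_\nu^2$ defined on smooth gradient fields. The lower bound is stable under closure, so it extends to the whole domain. Outside that domain the form is $+\infty$ and the inequality holds trivially.

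The main obstacle is the regularity hypothesis in \autoref{lm:apx_OL:hormanderL2}, which asks $V$ to be $\CCC^\infty$ so that $L^{-1}$ maps $\CCC^\infty_c$ into itself. I would not fight this. The statement is explicitly up to regularity, so I would first assume $V$ is smooth. I would then remark that for general $V$ the direction PI $\Rightarrow$ Hessian bound can be made robust by a direct argument. Given a gradient field $\Phi = \nabla\phi$, set $f = \nabla^*\Phi = L\phi$, so that $\innerprod{f}{\phi}_\nu = \norm{\Phi}_\nu^2$. Since $\Phi = \nabla\phi = \nabla L^{-1} f$, we get $\norm{\Phi}_\nu^2 = \innerprod{f}{L^{-1} f}_\nu \leq c_{\PI}^{-1} \norm{f}_\nu^2$. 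This is exactly the abstract equivalence $B^*B \succeq \id \iff BB^* \succeq \id$ used in the lemma's proof, and it only requires PI to make $L^{-1}$ well defined on $\LLL^2_\nu \cap \{1\}^\perp$.
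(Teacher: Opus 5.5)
Your proposal is correct and follows the paper's own argument: the paper proves the theorem by substituting the identity $\Hess_\nu \KLdiv{\cdot}{\nu}(\Phi,\Phi)=\int(\nabla^*\Phi)^2\,\d\nu$ from \autoref{lm:apx_OL:hessopt_two_forms} into the equivalence of \autoref{lm:apx_OL:hormanderL2} with $A\equiv c_{\PI} I_d$, and leaves density and smoothness issues to ``regularity considerations.'' Your additional direct argument for PI $\Rightarrow$ Hessian bound (write $\Phi=\nabla\phi$ and $f=L\phi$, so that $\norm{\Phi}_\nu^2=\innerprod{f}{L^{-1}f}_\nu\le c_{\PI}^{-1}\norm{f}_\nu^2$) is sound, and it usefully avoids the $\CCC^\infty$ hypothesis on $V$ that the lemma requires.
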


This equivalence was previously explicitly remarked in \cite[Lemma~35]{duncan2023geometry}. More precisely the result stated in that reference is a regularized version of the equivalence, but it can be recovered by taking a limit ($k(x, y) \to \delta(y-x)$) or by simple adaptations of the arguments.

\section{A local convergence guarantee in KL-divergence}
\label{sec:apx_MFLD_localKL}

Since LSI for $\nu$ is equivalent to global contraction in $\KLdiv{\cdot}{\nu}$ of the overdamped Langevin dynamics associated to $\nu$, and since the drift term $\nabla F'[\mu_t]$ in the MFLD PDE \eqref{eq:intro:MFLD} is morally close to $\nabla F'[\nu] = -\tau \nabla \log \nu$ when $\mu_t$ is close to a stationary measure $\nu$, then one could expect MFLD to converge locally in $\KLdiv{\cdot}{\nu}$ with a rate $2 \tau c_{\LSI}$.
This intuition was one of the initial motivations of our work, but we were unable to show it under satisfactory assumptions.

We only remark that if one is willing to make  different assumptions than standard, then a form of local convergence in $\KLdiv{\cdot}{\nu}$ can be obtained by a natural variation on the global convergence analysis from \cite{chizat2022mean,nitanda2022convex}.
This is the object of the next proposition,
which is
in part extracted from the proof of \cite[Prop.~5.1]{wang2024mean}.

\begin{proposition} \label{prop:apx_MFLD_localKL:MFLD_localKL}
	Under \autoref{assum:MFLD_A} with $\tau_0=0$,
	additionally assume $\nu$ satisfies LSI,
	and denote by
	$c_{\LSI}, c_{\mathrm{T2}}$ its optimal LSI resp.\ Talagrand inequality constants.
	Further suppose
	$M_{10} = \sup_{\tmu \in \PPP_2(\RR^d), x,y \in \RR^d}~ \norm{\nabla_x F''[\tmu](x,y)} <\infty$
	and let $C = \tau^{-1} M_{10} \sqrt{2/c_{\mathrm{T2}}}$.
	Then for any $0 < \eps \leq \frac18$,
	if
	$\KLdiv{\mu_0}{\nu} + 2 C \sqrt{\KLdiv{\mu_0}{\nu}} \leq C^{-2} \eps^2 / 10$, then the MFLD \eqref{eq:intro:MFLD} satisfies
	\begin{equation*}
		\forall t \geq 0,~
		\KLdiv{\mu_t}{\nu}
		\leq e^{-2 \tau c_{\LSI} (1-\eps) t} \left[ \KLdiv{\mu_0}{\nu} + 2 C \sqrt{\KLdiv{\mu_0}{\nu}} \right].
	\end{equation*}
%
	Alternatively, if $M_{00} = \sup_{\tmu,x,y} \abs{F''[\tmu](x,y)} < \infty$, the previous sentence holds with $C = \sqrt{2} \tau^{-1} M_{00}$.
\end{proposition}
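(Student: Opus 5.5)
We follow the global KL analysis of \cite{chizat2022mean,nitanda2022convex}, but compare the drift $\nabla F'[\mu_t]$ to $\nabla F'[\nu] = -\tau \nabla \log \nu$ instead of to the proximal Gibbs measure. Writing the MFLD as $\partial_t \mu_t = \nabla \cdot ( \mu_t ( \tau \nabla \log \frac{\mu_t}{\nu} + \nabla F'[\mu_t] - \nabla F'[\nu] ) )$ and setting $D_t = \KLdiv{\mu_t}{\nu}$ and $I_t = \int \norm{\nabla \log \frac{\mu_t}{\nu}}^2 \d\mu_t$, we have
\begin{equation*}
	\dot D_t = -\tau I_t - \int \nabla \log \frac{\mu_t}{\nu} \cdot \left( \nabla F'[\mu_t] - \nabla F'[\nu] \right) \d\mu_t
	\leq -\tau I_t + \sqrt{I_t}\, \sup_{\RR^d} \norm{\nabla F'[\mu_t] - \nabla F'[\nu]}.
\end{equation*}
Under $M_{10}<\infty$, the error term is controlled by a linear interpolation. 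Since $\nabla F'[\mu](x) - \nabla F'[\nu](x) = \int_0^1 \int \nabla_x F''[\nu + s(\mu-\nu)](x,y) \,\d(\mu-\nu)(y)\,\d s$ and $y \mapsto \nabla_x F''(x,y)$ is bounded by $M_{10}$, this difference is bounded by $2 M_{10} \mathrm{TV}(\mu,\nu)$. A sharper route is to pass through the Wasserstein geodesic to get $M_{10} W_1$-type bounds, as in \cite[Lemma~D.8]{wang2024mean}. Combining with T2, $W_2 \leq \sqrt{2 D_t / c_{\mathrm{T2}}}$, gives an error of order $\tau C \sqrt{D_t}$ with $C = \tau^{-1} M_{10} \sqrt{2/c_{\mathrm{T2}}}$. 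In the $M_{00}$ case we would instead use Pinsker and the bound $\abs{\int h \,\d(\mu-\nu)} \le 2\sup\abs{h}\,\mathrm{TV}$, which yields the constant $\sqrt2 \tau^{-1} M_{00}$.

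This bound is of order one in the distance to $\nu$, so it is not negligible against $\tau I_t$ by itself. The fix is to exploit linear convexity ($\tau_0=0$) through the free energy gap $G_t = F_\tau(\mu_t) - F_\tau(\nu) \geq 0$. We have $\dot G_t = -\int \norm{\nabla \log (\mu_t/\hmu_t)}^2 \d\mu_t \leq 0$, and by the entropy sandwich $\tau D_t \leq G_t$, using linear convexity of $F$ and stationarity of $\nu$. We also have $G_t \leq \tau D_t + (\text{Bregman term})$, where the Bregman term is bounded by $\int (F'[\mu_t] - F'[\nu]) \,\d(\mu_t - \nu)$, which is of order $\tau C \sqrt{D_t}\cdot(\ldots)$. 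This is the origin of the quantity $D_0 + 2C\sqrt{D_0}$ in the statement: it upper-bounds $G_0/\tau$. By monotonicity of $G$, $D_t \leq G_t/\tau \leq D_0 + 2C\sqrt{D_0} \eqqcolon \Lambda_0$ for all $t$, so $D_t$ stays small. Then, with Young's inequality, LSI $I_t \geq 2 c_{\LSI} D_t$, and the smallness $C \sqrt{\Lambda_0} \lesssim \eps$, we obtain $\dot D_t \leq -2\tau c_{\LSI}(1-\eps) D_t$ up to a term we must absorb.

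The main obstacle is that the error $\sqrt{I_t}\cdot \tau C \sqrt{D_t}$ compared with $\tau I_t$ only gives a $(1-O(C\sqrt{D_t/I_t}))$ factor. Since $\sqrt{D_t/I_t} \leq (2c_{\LSI})^{-1/2}$, this is not automatically small. The plan is to run the Grönwall argument on the Lyapunov functional $G_t/\tau$ rather than on $D_t$. Its decay rate comes from the relative Fisher information with respect to $\hmu_t$, together with uniform LSI for $\hmu_t$. That uniform LSI is obtained by perturbation of $\nu$ in the Holley--Stroock sense, using $\sup\abs{F'[\mu_t]-F'[\nu]} \lesssim \tau C\sqrt{\Lambda_0} \le \eps$, which gives the constant $c_{\LSI}(1-O(\eps))$. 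Combined with $D_t \le G_t/\tau$, this yields the announced bound $D_t \leq e^{-2\tau c_{\LSI}(1-\eps)t}\Lambda_0$.
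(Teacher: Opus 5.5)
Your final plan is the paper's proof. You take $F_\tau(\mu_t)-F_\tau(\nu)$ as the Lyapunov functional and use its monotonicity together with $\tau^{-1}\sup\abs{F'[\mu_t]-F'[\nu]}\le C\sqrt{\KLdiv{\mu_t}{\nu}}$ and Holley--Stroock to get a uniform LSI constant $(1-\eps)c_{\LSI}$ for $\hmu_t$; you then apply the argument of \cite{chizat2022mean} and pass between $\KLdiv{\cdot}{\nu}$ and the free energy via the entropy sandwich. Your bound on the initial gap through the Bregman term $\int (F'[\mu_0]-F'[\nu])\,\d(\mu_0-\nu)$ is an equivalent variant of the paper's bound through $\tau\KLdiv{\mu_0}{\hmu_0}$.

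One slip in your first paragraph: \cite[Lemma~D.8]{wang2024mean} with constant $M_{10}$ gives a $W_1$ bound on the potentials $F'[\mu](x)$, not on the gradients $\nabla F'[\mu](x)$ (that would need control of $\nabla_x\nabla_y F''$). The potential bound is exactly what Holley--Stroock needs, so the final argument is unaffected.
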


\begin{proof}
	For any $\mu \in \PPP_2(\RR^d)$, denoting by $\hmu \propto \exp\left( -\frac1\tau F'[\mu] \right) \d x$ the associated proximal Gibbs distribution, we have that uniformly on $\RR^d$,
	\begin{align}
		\abs{\log \hmu - \log \nu + \cst} 
		= \frac1\tau \abs{F'[\mu] - F'[\nu]}
		&\leq \tau^{-1} M_{10} W_2(\mu, \nu)   \nonumber \\
		&\leq \tau^{-1} M_{10} \sqrt{2/c_{\mathrm{T2}}} \sqrt{\KLdiv{\mu}{\nu}}   \nonumber \\
		&\leq \tau^{-3/2} M_{10} \sqrt{2/c_{\mathrm{T2}}} \sqrt{F_\tau(\mu) - F_\tau(\nu)}.
	\label{eq:apx_MFLD_localKL:T2W2_pinskerTV}
	\end{align}
	Here the first inequality follows from applying \cite[Lemma~D.8]{wang2024mean} to $\mu \mapsto F'[\mu](x)$ for arbitrary fixed $x$,
	and the third inequality uses the lower half of the entropy sandwich \cite[Lemma~3.4]{chizat2022mean}.

	Consequently, by the Holley-Stroock criterion, $\hmu$ satisfies LSI with a constant at least 
	$c(\mu) = \exp\left( -\tau^{-3/2} M_{10} \sqrt{2/c_{\mathrm{T2}}} \sqrt{F_\tau(\mu) - F_\tau(\nu)} \right) c_{\LSI}$.
	On the other hand, $t \mapsto F_\tau(\mu_t) - F_\tau(\nu)$ is non-increasing since MFLD is the WGF of $F_\tau$.
	So for any $\mu_0$ such that
	\begin{equation}
		F_\tau(\mu_0) - F_\tau(\nu) \leq \tau^3 M_{10}^{-2} (c_{\mathrm{T2}}/2) \left[ -\log(1-\eps) \right]^2,
	\end{equation}
	the $\hmu_t$ satisfy LSI with constant at least $(1-\eps) c_{\LSI}$ uniformly for all $t \geq 0$, and so by applying the main result of \cite[Theorem~3.2]{chizat2022mean} we get the local convergence bound in function value
	\begin{equation}
		\forall t \geq 0,~
		\tau \KLdiv{\mu_t}{\nu} \leq F_\tau(\mu_t)-F_\tau(\nu)
		\leq e^{-2 \tau c_{\LSI} (1-\eps) t} \left( F_\tau(\mu_0)-F_\tau(\nu) \right).
	\end{equation}

	It remains to upper-bound $F_\tau(\mu_0)-F_\tau(\nu)$. By the upper half of the entropy sandwich lemma \cite[Lemma~3.4]{chizat2018global}, we have for any $\mu$,
	\begin{equation}
		F_\tau(\mu)-F_\tau(\nu)
		\leq \tau \KLdiv{\mu}{\hmu}
		= \tau \KLdiv{\mu}{\nu} + \tau \int \d\mu \left( \log \nu - \log \hmu \right).
	\end{equation}
	Now by definition and by the bound $\frac1\tau \abs{F'[\mu] - F'[\nu]} \leq \tau^{-1} M_{10} \sqrt{2/c_{\mathrm{T2}}} \sqrt{\KLdiv{\mu}{\nu}} \eqqcolon B(\mu)$ derived above,
	\begin{align}
		\log \nu - \log \hmu
		&= \frac1\tau (F'[\mu] - F'[\nu])
		+ \log \int \exp\left( -\frac1\tau F'[\mu](y) \right) \d y
		- \log \int \exp\left( -\frac1\tau F'[\nu](y) \right) \d y \\
		&\leq B(\mu)
		+ \log \int \exp\left( -\frac1\tau F'[\nu](y) + B(\mu) \right) \d y
		- \log \int \exp\left( -\frac1\tau F'[\nu](y) \right) \d y \\
		&\leq 2 B(\mu).
	\end{align}
	Hence $F_\tau(\mu) - F_\tau(\nu) \leq \tau \KLdiv{\mu}{\hmu} \leq \tau \KLdiv{\mu}{\nu} + 2 \tau B(\mu)$.
	By combining this estimate with the convergence bound in function value shown above, we obtain that for any $\eps>0$, if $\mu_0$ is such that
	$\KLdiv{\mu_0}{\nu} + \tau^{-1} M_{10} \sqrt{2/c_{\mathrm{T2}}} \sqrt{\KLdiv{\mu_0}{\nu}} \leq \tau^2 M_{10}^{-2} (c_{\mathrm{T2}}/2) \left[ -\log(1-\eps) \right]^2$, then
	\begin{equation}
		\forall t \geq 0,~
		\KLdiv{\mu_t}{\nu}
		\leq e^{-2 \tau c_{\LSI} (1-\eps) t} \left[ \KLdiv{\mu_0}{\nu} + 2 \tau^{-1} M_{10} \sqrt{2/c_{\mathrm{T2}}} \sqrt{\KLdiv{\mu_0}{\nu}} \right].
	\end{equation}
	The first part of the proposition follows, since $\left[ -\log (1-\eps) \right]^2 \geq \frac{\eps^2}{10}$ for $\eps \leq \frac18$.
	
	For the second part of the proposition, where we assume that $M_{00} = \sup_{x,y,\tmu} \abs{F''[\tmu](x,y)}$ is finite (instead of $M_{10}$), 
	note that uniformly on $\RR^d$,
	\begin{align}
		\abs{\log \hmu - \log \nu + \cst} 
		= \frac1\tau \abs{F'[\mu] - F'[\nu]}
		&\leq \tau^{-1} M_{00} \norm{\mu - \nu}_{\mathrm{TV}} \\
		&\leq \tau^{-1} M_{00} \sqrt{2} \sqrt{\KLdiv{\mu}{\nu}} \\
		&\leq \tau^{-3/2} M_{00} \sqrt{2} \sqrt{F_\tau(\mu) - F_\tau(\nu)}
	\end{align}
	by Pinsker's inequality.
	The result then follows by reasoning similarly as for the first part, except $M_{10} \sqrt{2/c_{\mathrm{T2}}}$ is replaced by $\sqrt{2} M_{00}$ in \eqref{eq:apx_MFLD_localKL:T2W2_pinskerTV}.
\end{proof}

The condition 
$M_{00} = \sup_{\tmu,x,y} \abs{F''[\tmu](x,y)} < \infty$
can be verified for the two-layer neural network training setting considered in \cite{chizat2022mean,nitanda2022convex}, provided that
the loss $\ell(y_i, \cdot)$ is smooth and 
the activation $\sigma(\cdot^\top x_i)$ is bounded on $\RR^d$, uniformly for all $(x_i, y_i)$ in the training set.
However, we note that both $\sup_{\tmu,x,y} \abs{F''[\tmu](x,y)}$ and $\sup_{\tmu,x,y} \norm{\nabla_x F''[\tmu](x,y)}$ are unbounded in some other cases of interest, such as the MFLD of quadratic $F$ over $\RR^d$ with a square-distance interaction kernel $k(x,x') = \norm{x-x'}^2$ in \eqref{eq:intro:quadr_F}.

Furthermore, even in cases where both \autoref{prop:apx_MFLD_localKL:MFLD_localKL} and our main result \autoref{thm:MFG:gen} apply, we note that the rate guaranteed by the latter theorem is faster, since the optimal LSI resp.\ PI constants of the stationary measure are always ordered as $c_{\LSI} \leq c_{\PI}$.
The discrepancy between the rate estimates can be large for low temperatures $\tau$,
as the work of \cite[Sec.~2.4]{menz2014poincare} indicates that for $\nu \propto e^{-V(x)/\tau} \d x$, the constants typically scale as $c_{\LSI} \asymp \tau\, c_{\PI}$ when $\tau \to 0$.

\fi

\end{document}